\documentclass[12pt]{amsart}
\usepackage{amssymb}
\usepackage{comment}
\usepackage{cite}
\usepackage[utf8]{inputenc}
\usepackage[aligntableaux=center]{ytableau}
\usepackage{tikz}
\usepackage{mathdots}
\usepackage[margin=1in, headsep=0.2in]{geometry}
\newcommand{\id}{\mbox{\rm id}}
\newcommand{\Ind}{\mbox{\rm Ind}}
\newcommand{\Res}{\mbox{\rm Res}}
\newcommand{\End}{\mbox{\rm End}}
\newcommand{\cont}{\mbox{\rm cont}}

\newcommand{\Stab}{\mbox{\rm Stab}}

\newcommand{\jdt}{\mbox{\rm jdt}}

\newcommand{\tvarphi}{\tilde{\varphi}}
\newcommand{\Rect}{\mbox{\rm Rect}}
\newcommand{\const}{\mbox{\rm const}}

\newtheorem{thm}{Theorem}[section]
\newtheorem{lemma}{Lemma}[section]
\newtheorem{prop}{Proposition}[section]
\newtheorem{cor}{Corollary}[section]

\title{AHA! RSK}
\author{Eugene Stern}
\address{Bloomberg LP, 731 Lexington Ave., New York, NY 10022}
\email{eugene.stern@gmail.com}
\dedicatory{To Alexander Alexandrovich Kirillov on his 90th birthday, with appreciation.}

\begin{document}
\ytableausetup{boxsize=1.1em}

\begin{abstract}
We give a spectral realization of the Robinson-Schensted-Knuth (RSK) correspondence in terms of the representation theory of the symmetric group $S_n$ and the degenerate affine Hecke algebra (AHA) $H_n$. We view RSK, which builds a pair of standard Young tableaux from a permutation, as a special case of rectification, also known as {\em Jeu de Taquin}, which turns skew tableaux into straight ones. In this framing, the initial permutation corresponds to a skew tableau of staircase shape. To interpret this in terms of representation theory, take permutations to label weight vectors in a generic $H_n$-module $V(a_1, \ldots , a_n)$, which is isomorphic to $\mathbb{C}[S_n]$ as an $S_n$-module. Writing permutations as staircases amounts to placing these weight vectors inside the regular representation of a larger symmetric group containing $S_n$; more geometrically, we push $S_n$ to the right toward infinity so its Jucys-Murphy (JM) elements have enough room to represent the external translations of $H_n$. Then, rectification corresponds to squeezing out this extra room from the left, leaving only $S_n$ and its regular JM elements as the limit of the external translations. By expressing slides via sequences of exchanges of consecutive values inside the tableau, we can model rectification by an operator acting on the regular representation. This lets us explicitly write down the change of basis between $H_n$-weight vectors and $S_n$-weight vectors, where the latter are eigenvectors of the JM elements in $S_n$ acting both on the left and on the right, and hence labeled by pairs of standard tableaux. The resulting correspondence between the labels of the weight vectors is exactly RSK.
\end{abstract}

\maketitle

\section{Introduction}
The goal of this paper is to explicitly realize the Robinson-Schensted-Knuth (RSK) correspondence in its most natural habitat, the representation theory of the symmetric group, with as little technical machinery as possible.

The origin of the RSK correspondence is the decomposition, which is valid for any finite group $G$, of the regular representation of $G$ on its group algebra $\mathbb{C}[G]$ as a direct sum of full endomorphism algebras of the irreducible representations of $G$:
\begin{equation}
\mathbb{C}[G] = \bigoplus_{V_i} \End(V_i) = \bigoplus_{V_i} V_i  \otimes V_i^* .
\end{equation}
In particular, taking $V_i$ as the space on which $G$ acts and $V_i^*$ as a multiplicity space, we find that each irreducible representation $V_i$ appears in $\mathbb{C}[G]$ with multiplicity equal to its dimension. Since $\mathbb{C}[G]$ has dimension $|G|$, we get the dimension formula:
\begin{equation}
|G| = \sum_{V_i} (\dim V_i)^2.
\end{equation}
In the case of the symmetric group $S_n$ of permutations of $n$ elements, the order of the group is $n!$, and its irreducible representations correspond to partitions of $n$, which we identify with their {\em shapes}, or {\em Young diagrams}, written $\lambda$. The dimension of the irreducible representation $V_{\lambda}$ corresponding to $\lambda$ is given by $f_{\lambda}$, the number of standard Young tableaux of shape $\lambda$, so we get the identity
\begin{equation}
n! = \sum_{| \lambda | = n} (f_{\lambda})^2.
\end{equation}
The RSK correspondence is a bijection realizing this identity, i.e., a recipe associating to each permutation (an element of the set counted by the left side of the identity) a pair of standard tableaux (an element of the set counted by the right side). It builds an insertion tableau from a sequence of numbers step by step, based on a rule for how to insert each succeeding number into the partial tableau built so far. With each step, the shape increases by a single box, and a second tableau records which box was added at each step. The correspondence is bijective because the insertion rule is reversible, so we can run the recipe backwards, using the record tableau to tell us which entry to pull out of the insertion tableau at each step.

As RSK arises from a basic identity in representation theory, one should expect, or at least hope, to see it realized directly in that setting. That is, one would like to see a decomposition of $\mathbb{C}[S_n]$ into irreducibles, bases for each component that combine into a basis for $\mathbb{C}[S_n]$, labelings of this basis by pairs of standard tableaux, and either another labeling of this basis by permutations or a natural mapping to another basis so labeled.

Such a realization first appeared in the geometric representation theory of $S_n$. This began with Steinberg’s \cite{bib_Ste} sighting of RSK in the variety, now named after him, each of whose points is a pair of full flags in $\mathbb{C}^n$, together with a nilpotent $n \times n$ matrix that preserves both. RSK arises when we label the components of this variety by permutations recording the relative position of the two flags, and also by pairs of tableaux recording how the Jordan type of the matrix decomposes when restricted along each flag. Kazhdan and Lusztig \cite{bib_KL} turned this into a decomposition of $\mathbb{C}[S_n]$ by mapping Steinberg variety components to Schubert varieties, taking their intersection cohomology, and mapping that to $\mathbb{C}[S_n]$. This results in an intersection cohomology basis for $\mathbb{C}[S_n]$ that decomposes into so-called left and right cells according to RSK, with basis elements labeled both by permutations and by pairs of standard tableaux labeling the cells.

The next sighting of RSK in the wild came in the representation theory of quantum groups, and directly motivated the discovery of the far-reaching theory of {\em crystal bases}. Date, Jimbo, and Miwa \cite{bib_DJM} considered the basic $n$-dimensional $U_q(\mathfrak{gl}_n)$-module $V$, took its $N$-fold tensor product $V^{\otimes N}$ (this is the natural setting of {\em Schur-Weyl duality}) and observed that, in the limit $q \to 0$, the weight vectors of $V^{\otimes N}$ (in the Lie algebra sense) reduce to fully decomposable tensors, with the limits of irreducible components of $V^{\otimes N}$ being described in terms of RSK applied to the indices of these tensors. This led Kashiwara to consider the $q \to 0$ limit of general $U_q(\mathfrak{g})$-modules, and to define crystal bases, which are especially well-behaved bases that appear in this limit. Kashiwara and Nakashima \cite{bib_KN} then described the {\em crystal graph} structure associated with the Date-Jimbo-Miwa realization of RSK. This connects with the intersection cohomology basis picture via a general equivalence between crystal and canonical bases \cite{bib_GL2,bib_Lus} and between canonical and KL-bases for the $N$-fold tensor representation of $U_q(\mathfrak{gl}_n)$ \cite{bib_GL}.

Our purpose here is to give an alternate RSK decomposition of $\mathbb{C}[S_n]$, using more elementary tools. Rather than intersection cohomology bases, we work with weight bases, which are eigenbases of maximal commutative subalgebras of translations related to $S_n$ (the effective punch line is that the two types of bases share the same combinatorics, determined by {\em Young’s lattice}, the branching graph of the symmetric group). The action of the translations determines branching rules for representations of $S_n$, and this branching is essentially all we need. The structure controlling all this is the degenerate {\em affine Hecke algebra} $H_n$, which contains both $S_n$ and the translations, and explains, by the work of Okounkov-Vershik \cite{bib_OV}, the appearance of Young tableaux, which arise as sets of eigenvalues (weights) of the common eigenvectors. Tableaux index the weight basis for irreducible representations of $S_n$, and this extends \cite{bib_ES} to {\em pairs} of tableaux of the same shape indexing a weight basis for $\mathbb{C}[S_n]$ considered as both a left and right $S_n$-module. \cite{bib_ES} noted that the appearance of these pairs indicates that RSK is likely lurking nearby, and this paper clears enough of the underbrush to make it plainly visible.

Sections \ref{sec_bump_slide_switch} and \ref{sec_hecke_rep_theory} give the background material we need on RSK-related combinatorics and the basic representation theory of $H_n$, respectively. We mostly include proofs, which are short, so as to keep the paper self-contained and the exposition consistent.

Combinatorially, we translate RSK operations on tableaux (insertions and bumps), to Jeu de Taquin slides, and then to sequences of switches of tableau entries (Bender-Knuth involutions \cite{bib_BK,bib_BSS}), which serve as the bridge to representation theory. We describe the step from RSK to Jeu de Taquin in terms of tableau geometry, without going through Knuth moves and Knuth equivalence, as is common in the literature. The reason is that Knuth equivalence is a kind of combinatorial shadow of the representation theoretic scaffolding that underpins the rest of the paper, so instead of citing it explicitly, we effectively explain its structural role (implicitly). We hope that our presentation renders this ``difficult material’’ (as described in the preface of Stanley’s Enumerative Combinatorics, Vol. 2 \cite{bib_Sta}) a little less difficult. 
 
With the combinatorial and representation theoretic tools we need established, we get to work in Section \ref{sec_Sn_rep_theory}, using $H_n$ to realize the combinatorics inside the regular representation of the symmetric group, with RSK as the payoff. We start with a weight basis, parametrized by permutations, for a generic finite-dimensional $H_n$-module; this basis is made up of eigenvectors for a generic set of commuting ``external translations.’’ We then embed this module inside the regular representation of a larger permutation group: specifically, instead of acting on $n$ letters, this group acts on $\binom{n}{2} + n$ letters.\footnote{This has echoes of Cayley’s theorem embedding any finite group inside a larger permutation group.} This gives extra space to realize the external translations of $H_n$ without constraining them to act as the usual internal Jucys-Murphy elements. We then define operators that mimic the switching operation on tableaux given in Section \ref{sec_bump_slide_switch}. Iterating these operators gradually squeezes the larger permutation group back into $S_n$, turns the external Hecke translations into the JM elements of $S_n$, and turns the permutation into a pair of straight standard tableaux (RSK). The core of this is Proposition \ref{prop_inner_slide}, whose proof explicitly displays the mechanism by which this happens and makes it clear that it corresponds exactly to the combinatorics of Section \ref{sec_bump_slide_switch}. Thus, in addition to realizing RSK and Jeu de Taquin directly in the representation theory of $S_n$, our approach gives a full view behind the combinatorial curtain into their inner workings,\footnote{“He felt like somebody had taken the lid off life and let him look at the works.” \cite{bib_Ham,bib_Mek}} showing in particular that JDT slides and their properties are a manifestation of the Hecke commutation relations.

\section{Bumping is Sliding is Switching} \label{sec_bump_slide_switch}
\subsection{Bumping}
The Robinson-Schensted-Knuth algorithm turns a permutation of $\{1, \ldots , n \}$ into a pair of standard Young tableaux of the same ($n$-box) shape $\lambda$.\footnote{Books by Stanley \cite{bib_Sta} and Fulton \cite{bib_Ful} are standard references for the material in this subsection and the next.} Here $\lambda = (\lambda_1, \ldots , \lambda_k)$, with $\lambda_1 + \cdots + \lambda_k = n$ and $\lambda_i \geq \lambda_{i+1}$ denotes a partition of $n$. We also call it a {\em shape} because we represent partitions by {\em Young diagrams}, which are left-justified arrays of boxes where, if we count rows from top to bottom, the $i$-th row has $\lambda_i$ boxes:
\begin{equation*}
(5,3,2) 
\xrightarrow{\hspace{0.5cm} \cong \hspace{0.5cm}}
\ytableaushort{\ \ \ \ \ , \ \ \ , \ \ }
\end{equation*}
A {\em Young tableau} is a filling of an $n$-box Young diagram by the values $1, \ldots , n$, each appearing once. A {\em standard tableau} is a tableau where the entries increase along rows and columns, reading to the right and down, for example:
\begin{displaymath}
\ytableaushort{1247,358,6}.
\end{displaymath}
A standard tableau is equivalent to a path in {\em Young’s lattice}, which builds up a Young diagram by adding a box at each step, with $i$ being written in the box that was added at the $i$-th step.\footnote{More generally, we can fill Young diagrams with entries from any totally totally ordered set and still have a notion of a standard tableau. In particular, if $k<n$, we can fill $k$-box diagrams with entries from arbitrary $k$-element subsets of $\{1,\ldots ,n\}$, and still talk about tableaux and standard tableaux, and we will do this throughout without further comment.}

The RSK algorithm is most commonly stated as follows. First, write the permutation as the second line of its two-line form, i.e., write $w \in S_n$ as the sequence $w(1) \ w(2) \ \ldots \ w(n)$. For compactness, write $w_i = w(i)$, i.e. $w = w_1 w_2 \cdots w_m$. (When doing combinatorics, we primarily think of permutations as words, but this will change when we switch to doing algebra.) Then, reading left to right, insert each $w_i$ into the (partial) tableau built up from $w_1 \cdots w_{i-1}$, as follows:
\begin{itemize}
\item {\em Inserting a number into a row} (of increasing numbers) means inserting it as far to the right as possible while keeping the row increasing. This means that if the number being inserted is larger than the last number in the row, we put it at the end, and if not, then we insert it {\em in place} of the first number that exceeds it, and expel (bump) that number from the row. (Informally: {\em bump the smallest larger entry.}) For example, 
\begin{eqnarray*}
\ytableaushort{1245} \longleftarrow 6 &=& \ytableaushort{12456} \\
\ytableaushort{1245} \longleftarrow 3 &=& \ytableaushort{1235} \longrightarrow 4.
\end{eqnarray*}
\item {Inserting a number into a tableau} means inserting it into the first row. If insertion bumps a number, we then insert that number into the next row. Inserting and bumping follows the rows of the tableau until there isn’t a bump, i.e. the insertion takes place at the end of a (possibly empty) row.
\item {\em The insertion tableau $P(w)$} is obtained by starting with an empty tableau (no boxes) and inserting each entry of $w_1 w_2 \cdots w_n$ sequentially into the partial tableau built by insertion from the entries before it.
\item {\em The record tableau $Q(w)$} is obtained by starting with a second empty tableau, and, after each instance of inserting $w_i$ into $P$ (thereby adding a box to the row where the insertion terminated), adding a box in the same location, containing the entry $i$, to $Q$.
\end{itemize}

For example, let $w = 35164287$. The sequence of insertion tableaux under RSK is
\begin{equation*}
\ytableaushort{3} \longrightarrow \ytableaushort{35} \longrightarrow \ytableaushort{15,3} \longrightarrow \ytableaushort{156,3} \longrightarrow \ytableaushort{146,35} \longrightarrow \ytableaushort{126,34,5} \longrightarrow
\end{equation*}
\begin{equation*}
\longrightarrow \ytableaushort{1268,34,5} \longrightarrow \ytableaushort{1267,348,5},
\end{equation*} 
so $P(\sigma) = \ytableaushort{1267,348,5}$, and the record tableau is $Q(\sigma) = \ytableaushort{1247,358,6}$.

Write coordinates $(i,j)$ for the box in the $i$-th row and $j$-th column of a Young diagram (analogous to indexing entries of a matrix).
\begin{lemma} \label{rsk_properties}
Insertion and RSK have the following basic properties:
\begin{enumerate}
\item If $a_{ij}$ gets bumped from position $(i,j)$ to position $(i+1, j')$ during an insertion step into a standard (partial) tableau, then $j' \leq j$. (Thus, the {\em insertion path} generated by inserting a single number into a partial tableau moves weakly to the left.)
\item $P(w)$ and $Q(w)$ are standard.
\item $w \mapsto (P(w), Q(w))$ is a bijection between $S_n$ and pairs of standard tableaux of the same shape.
\end{enumerate}
\end{lemma}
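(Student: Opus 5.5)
I would prove the three parts in order, each resting on the previous one, using only the definitions of row insertion and tableau insertion.

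For (1), suppose $x$ is inserted into row $i$ and bumps $a_{ij}$, the smallest entry of row $i$ exceeding $x$, so $x < a_{ij}$. Then $a_{ij}$ is inserted into row $i+1$. If row $i+1$ has no box in column $j$, then it has at most $j-1$ boxes. If $a_{ij}$ lands at its end, then $j' \le j$ at once. If instead it bumps something, that happens at a column of row $i+1$, which again is at most $j-1$. Otherwise $(i+1,j)$ exists, and standardness of the column gives $a_{i+1,j} > a_{ij}$. So the first entry of row $i+1$ exceeding $a_{ij}$ lies in some column $j' \le j$, and $a_{ij}$ is placed there (bumping that entry). In all cases $j' \le j$.

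For (2), I would show by induction on the number of insertions that a standard partial tableau stays standard after one insertion. Rows stay increasing because row insertion places a value exactly where the row remains increasing. For columns, consider a value $y$ placed at $(i+1,j')$, where it was bumped from $(i,j)$ with $j' \le j$ by (1).
\begin{itemize}
\item \emph{Box above.} The entry now sitting at $(i,j')$ is at most $y$, and in fact less than $y$. If $j'<j$, that entry was in row $i$ to the left of $y$, so it is smaller than $y$. If $j'=j$, that entry is the value that bumped $y$, which is smaller than $y$.
\item \emph{Box below.} If $y$ displaced some $z$, then $y<z$, and $z$ was less than the old entry below it, so $y$ is too.
\end{itemize}
The shape stays a Young diagram, since a new box is created only at the end of a row. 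That row is either empty or strictly shorter than the row above, because of the column comparison in (1). Hence $P(w)$ is standard. $Q(w)$ is standard because its boxes are added in increasing order $1,\dots,n$, each at an outer corner of a growing Young diagram.

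For (3), both sets have cardinality $n!$: the count of pairs of standard tableaux is $n! = \sum_\lambda f_\lambda^2$, given earlier in the paper. So it suffices to prove injectivity, or equivalently to construct an inverse. I would construct reverse insertion. Given $(P,Q)$, the largest entry $n$ of $Q$ sits at a corner $(r,c)$. Remove the entry $y$ of $P$ at $(r,c)$ and reverse-insert it into row $r-1$: it replaces the largest entry smaller than $y$, and that entry moves up to row $r-2$. Continue until a value leaves the first row; this value is $w_n$. The key check, which I expect to be the main obstacle, is that reverse bumping exactly undoes the forward bump at each row. It needs the observation that the forward rule ``bump the smallest larger entry'' and the reverse rule ``unbump the largest smaller entry'' are mutually inverse on increasing rows. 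It also needs the fact that the reversed path stays inside the diagram and produces a standard tableau, which is the mirror image of (1) and (2). Iterating recovers $w_n, w_{n-1},\dots,w_1$, so the map is injective and hence bijective.
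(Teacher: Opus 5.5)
Your proposal is correct and follows essentially the same route as the paper: the column comparison $a_{ij}<a_{i+1,j}$ for (1), an induction checking the entries above and below each newly placed value for (2), and reverse bumping driven by the largest entry of $Q$ for (3). The main difference is in (3), where you make bijectivity explicit by combining injectivity with the count $n!=\sum_\lambda f_\lambda^2$ (legitimate here, since the paper takes that identity as known), whereas the paper only asserts that each step is reversible; you also handle explicitly two points the paper leaves implicit, namely the case in (1) where $(i+1,j)$ is missing and the check in (2) that the shape remains a Young diagram.
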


\begin{proof} 
Along with $a_{ij}$ at $(i,j)$, write $a_{i-1,j}$ and $a_{i+1,j}$ for the entries at positions $(i-1,j)$ and $(i+1,j)$ before the insertion step. For the first part, $a_{ij} < a_{i+1,j}$, so the first number in the $i+1$-st row that exceeds $a_{ij}$, which $a_{ij}$ bumps, is either $a_{i+1,j}$ or appears to its left.

For the second part, for $P(w)$, we induct on the length of the insertion tableau, assuming we start with a standard (partial) tableau and make an insertion. This keeps the rows increasing by definition. As for the columns, assume $a_{ij}$ gets bumped as part of the insertion path and replaced by $b_{ij}$. We have $b_{ij} < a_{ij}$, and, by the first part, $a_{ij}$ ends up in the $i_1$-st row, either still in the $j$-th column or to the left. Either way, the entry in position $(i+1,j)$ after the bump is at least $a_{ij}$, hence greater than $b_{ij}$. In addition, $b_{ij}$ came from the $i-1$-st row, either from the $j$-th column or one to the right of it. In the latter case, it must exceed the original entry at $(i-1,j)$, which is to the left of the insertion path. In the former case, $b_{ij}$ was bumped from $(i-1,j)$, which means the entry that replaced it must be strictly smaller. In either case, the entry at position $(i-1,j)$ after the insertion is smaller than $b_{ij}$, proving that the columns after insertion must be increasing as well. This proves that $P(w)$ is standard. For $Q(w)$, the statement is obvious because each insertion step adds a box to the shape of the (partial) insertion tableau built so far, generating a path in Young’s lattice, and $Q(w)$ is the standard tableau corresponding to (i.e., recording) this path.

The third part follows because every step in the algorithm is reversible: look at the location of the highest entry of $Q(w)$ to see which bump to undo, find the entry of $P(w)$ at the same location, look in the preceding row to find the location this value must have been bumped from, and continue working backwards until you reach the first row and the value that was inserted there initially. This peels off the elements of $w$ one at a time and eventually recovers the entire word.
\end{proof}

A non-obvious symmetry property of RSK that will seem more natural in the representation theory setting is
\begin{prop}
$Q(w) = P(w^{-1})$.
\end{prop}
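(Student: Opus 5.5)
The plan is to use Viennot's geometric construction (shadow lines), which is symmetric under transposing the permutation matrix. Transposition swaps $w$ with $w^{-1}$, so the symmetry statement will follow once the construction is shown to compute both $P(w)$ and $Q(w)$. The section title suggests the paper will later derive this from Jeu de Taquin on the staircase, but a self-contained combinatorial route is the most economical at this point.

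\textbf{Setup.} Represent $w$ by the point set $\{(i,w_i)\}$ in the plane. The \emph{shadow} of a point is the quadrant to its upper right. The first shadow line is the boundary of the union of the shadows of all points. It is a staircase broken line whose outer corners are the points that are minimal in the product order (the left-to-right minima), with horizontal and vertical segments joining them. Remove those points and repeat to get the second line, and so on. Write $x_1$ for the $x$-coordinate where the first line ends on its right end, and $y_1$ for the $y$-coordinate where its vertical ray ends.

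\textbf{Step 1: the first rows.} Show that the first row of $P(w)$ consists of the $y$-coordinates $y_1$ of the shadow lines, and the first row of $Q(w)$ consists of their $x$-coordinates $x_1$. I will prove this by induction on the prefix $w_1\cdots w_i$, restricting the point set to $x\le i$. The claim is that after inserting $w_i$, the entry in column $j$ of the first row equals the lowest $y$-value currently on the $j$-th shadow line. Inserting $w_i$ adds a point at $x=i$. This point either starts a new line, meaning it lies above every current line's $y$-value, which corresponds to appending at the end of the row. Or it joins the first line whose current $y$-value exceeds $w_i$, lowering that value; this is exactly ``bump the smallest larger entry.'' The bumped values are precisely the $y$-coordinates of the northeast corners created along each line. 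Each line's $x$-end records when its column was created, which matches the first row of $Q$.

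\textbf{Step 2: recursion.} The entries bumped out of the first row, listed in bumping order, are exactly the inner (northeast) corners of all the shadow lines, read with their $x$-coordinates as times. These corners form a new partial permutation, which I will check really is a point set with distinct $x$- and $y$-coordinates. The rows of $P$ and $Q$ below the first are then RSK of this new point set, where ``time'' is the insertion step, so $Q$ still records positions correctly. Applying Step 1 repeatedly (Viennot's iteration) shows that row $k$ of $P$ and of $Q$ are the $y$- and $x$-coordinates of the $k$-th generation of shadow lines.

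\textbf{Step 3: symmetry.} The whole construction is invariant under reflection in the diagonal. That reflection sends shadows to shadows, lines to lines and corners to corners, while swapping $x$ with $y$. It also sends the point set of $w$ to that of $w^{-1}$. Hence $P(w^{-1})=Q(w)$.

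The main obstacle is Step 2. I must verify carefully that the bumped sequence, indexed by insertion times, is precisely the set of corner points, and that the induction of Step 1 applies to it with ``time'' meaning the $x$-coordinate of the corner. The tie-free bookkeeping of corners with distinct coordinates is the delicate point.
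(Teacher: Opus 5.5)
\textbf{Verdict: correct, by a different route.} The paper gives no argument of its own for this proposition; it refers to Stanley. Later, in the last section, it uses the identity as an input: the step $v_{T^*,P(\sigma^{-1})}=v_{T^*,Q(\sigma)}$ is what identifies the right-hand label produced by the dual rectification operator.

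You use Viennot's shadow lines, the geometric form of Knuth's ``classes and derived array'' argument. This is elementary and self-contained, and it turns the symmetry into a literal reflection in the diagonal. The price is the bookkeeping in Step 2, which you flag correctly and which is routine:
\begin{itemize}
\item the corner points have distinct $x$-coordinates because each insertion bumps at most one entry out of the first row;
\item they have distinct $y$-coordinates because a value leaves the first row at most once;
\item rows $2,3,\ldots$ of $P$ and $Q$ are then exactly the insertion and record tableaux of the time-ordered bumped sequence, which is the corner set.
\end{itemize}
Within the paper's framework, the mirror-image (right-action) construction naturally yields $P(\sigma^{-1})$ as the right label of the image of $v_\sigma$. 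It is exactly this proposition that converts that label into $Q(\sigma)$. So a proof independent of the Hecke machinery, such as yours, is what keeps the paper's final theorem non-circular rather than being superseded by it.

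\textbf{One fix to the Setup.} As written, it attaches the coordinates to the wrong ends of each shadow line. Read literally, your $x_1$ is the $x$-coordinate at the finite end of the horizontal ray, i.e.\ of the lowest point on the line. Your $y_1$ is the $y$-coordinate at the finite end of the vertical ray, i.e.\ of the leftmost (highest) point. For $w=21$, with points $(1,2)$ and $(2,1)$ on a single line, this would give first rows $\{2\}$ and $\{2\}$ instead of the correct $\{1\}$ and $\{1\}$.

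The correct statement is what your Step 1 invariant actually uses:
\begin{itemize}
\item the first row of $P$ consists of the $y$-coordinates of the horizontal rays, i.e.\ the lowest point of each line;
\item the first row of $Q$ consists of the $x$-coordinates of the vertical rays, i.e.\ the leftmost point, which is the time the column was created.
\end{itemize}
Step 3 would not catch this slip, because both labelings are invariant under reflection in the diagonal. It is Step 1 that pins down the right one, so state the Setup to match it.
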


We refer the reader to \cite{bib_Sta} for a proof.

\subsection{Sliding}
The RSK algorithm has an alternate formulation, which will enable us to link it representation theory. We express this in terms of {\em skew tableaux} and {\em Jeu de Taquin}, which we develop in this subsection, and {\em rectification}, developed in the next.

A {\em skew diagram}, or {\em skew shape}, is obtained by cutting a subdiagram out of a Young diagram. For example, if we cut $(2)$ out of $(3,3,2)$, we get:
\begin{equation*}
\begin{ytableau}
\bullet & \bullet & \\
& & \\
& \\
\end{ytableau}
\xrightarrow{\hspace{0.5cm} \cong \hspace{0.5cm}}
\begin{ytableau}
\none & \none & \\
& & \\
& \\
\end{ytableau}
\end{equation*}
We write this skew shape as $(3,3,2) / (2)$, or, more generally, $\lambda / \mu$.

A {\em skew tableau} is the skew-diagram version of a Young tableau, i.e., a filling of the boxes with the numbers $1,\ldots ,n$. The definition of {\em standard} (increasing rows and columns) is unchanged for skew tableaux, so a standard skew tableau can look like
\[
\begin{ytableau}
\none & \none & 5 \\
1 & 2 & 6 \\
3 & 4
\end{ytableau}
\hspace{0.5cm}
\mbox{\rm or}
\hspace{0.5cm}
\ytableaushort{\none\none\none\none 1 2, \none 3 4, 5 6}
\hspace{0.5cm}
\mbox{\rm or}
\hspace{0.5cm}
\ytableaushort{\none\none 1,\none 2, 3}.
\]
Note in particular that skew shapes need not be connected (by connected we mean that the interior of the shape, obtained by erasing inside edges, is connected, so standardness forces relations among all the entries), and also that for the last skew shape, which by our definition is disconnected, we may enter $1,2,3$ in any order and remain standard.

To manipulate skew tableaux, we introduce a sliding algorithm called {\em Jeu de Taquin} (French for {\em teasing game}). This is the French name of the ``15’’ puzzle, in which 15 square tiles are placed on a $4 \times 4 = 16$-square grid, leaving one square hole, and we rearrange the tiles by moving them through wherever the hole is at any given moment. For us, the grid will be a Young diagram, tiles will be entries of Young tableaux, and the holes will start on the edges of the diagram. The main application is {\em rectification}, which is an algorithm to turn a standard skew tableau into an ordinary one, by sliding the entries of the skew tableau $\lambda / \mu$ back toward the origin to iteratively fill the holes cut out by $\mu$.

To fix language, define an {\em outer box} of a Young diagram $\lambda$ (more generally, a skew shape $\lambda / \mu$) to be a box with no box below or to the right, so that removing it from $\lambda$ leaves a legal diagram with one fewer box. Define an {\em outer corner} of $\lambda$ or $\lambda / \mu$ to be a location where we could add a box to $\lambda$ to form a legal diagram. Given a skew shape $\lambda / \mu$, define an {\em inner corner} of $\lambda / \mu$ to be an outer box of $\mu$ (intuitively, removing this box from $\mu$ corresponds to adding it to $\lambda / \mu$) and an {\em inner box} of $\lambda / \mu$ to be an outer corner of $\mu$ (so that adding it to $\mu$ corresponds to removing it from $\lambda / \mu$). So the inner and outer corners of $\lambda / \mu$ are the ``holes,’’ i.e., the locations where boxes could be added inside and outside.

As an example, for the skew shape $(5,4,3) / (3,2)$, we label the inner corners by filled dots and the outer corners by unfilled ones:
\[
\begin{ytableau}
\none & \none & \bullet &  &  \circ \\
\none & \bullet &  & \circ  \\
& & \\
\circ
\end{ytableau}
\]

A {\em Jeu de Taquin slide} is a sequence of moves of individual values inside a standard skew tableau, filling a hole at a corner and resulting in another standard skew tableau. Specifically, given a standard skew tableau $T$ of shape $\lambda / \mu$, and an inner corner $c$, define $\jdt_c(T)$, the {\em inner slide} of $T$ into $c$ as follows. Let $(i,j)$ be the location of the inner corner. Choose the minimum of the values at $(i+1,j)$ and $(i, j+1)$, and slide it into $(i,j)$. (If only one value exists, slide that.) This leaves a hole, which we fill the same way (either from the right or from below), leaving another hole, and so on. With each slide move, the hole moves either down or to the right, and eventually reaches an outer corner of $\lambda$ (which has just been vacated). At this point, there are no values available to slide over, and the process terminates. $\jdt_c(T)$ is defined to be the tableau that results, which has the same shape as $T$, except with an inner corner filled and an outer box removed, leaving an outer corner.

Given $T$ of shape $\lambda / \mu$ and an outer corner $c$, define $\jdt^c(T)$, the {\em outer slide} of $T$ into $c$ to be the inverse of an inner slide. That is, we fill the outer corner either from above or from the left (with whichever value is larger), fill the hole this leaves from above or from the left, and continue until the hole, which travels either leftward or upward with each slide move, arrives at a just-vacated inner corner of $\lambda / \mu$. $\jdt^c(T)$ is the tableau that results, and its shape is obtained from $\lambda / \mu$ by filling an outer corner and removing an inner box to leave an inner corner.

For example, here is a sample inner slide starting with a tableau of shape $(5,4,4)/(2,2)$ into the inner corner located at $(2,2)$, resulting in a tableau of shape $(5,4,3)/(2,1)$:
\begin{equation} \label{sample_inner_slide}
\ytableaushort{\none\none 139, \none\bullet 27, 4568}
\longrightarrow
\ytableaushort{\none\none 139, \none 2 \bullet 7, 4568}
\longrightarrow
\ytableaushort{\none\none 139, \none 267, 45\bullet 8}
\longrightarrow
\ytableaushort{\none\none 139, \none 267, 458}
\end{equation} 
We will call two skew tableaux {\em jdt-equivalent} if one can be obtained from the other by a sequence of jdt slides (both inner and outer slides are allowed). This is symmetric because every slide has an inverse (by the above), hence it is an equivalence relation on skew tableaux (transitivity is built into the definition) that we will write as $\cong$.

\subsection{Bumping is Sliding} \label{sec_bump_slide}
As already mentioned, we will use Jeu de Taquin to transform a skew tableau into a straight one by filling in the holes in $\lambda / \mu$ made up by the inner shape $\mu$, a process called {\em rectification} (or, more prosaically, straightening). This is defined as follows: pick an inner corner in $\lambda / \mu$ and perform an inner slide to fill it, resulting in a tableau with a new shape $\lambda' / \mu'$, where each of $\lambda'$ and $\mu'$ have one less box than $\lambda$ and $\mu$. Repeat for $\lambda' / \mu'$, and continue to fill in boxes of $\mu$, one at a time, until we reach a regular (non-skew) tableau, i.e. $\mu' = \emptyset$.

To illustrate the connection between rectification and RSK, we need a way to represent an arbitrary permutation as a skew tableau. Two types of shapes we can use are {\em ribbons} and {\em staircases}:

A {\em ribbon} is a connected skew diagram with no two boxes on the same diagonal, where we take diagonals to go from NW to SE, just like in a matrix. To be more precise, define the {\em content} of a box to be its column index minus its row index, where the indexing is matrix-style, as before. This notion will really earn its keep when we move to representation theory, but for now we note that NW-to-SE diagonals consist of boxes with the same content, and a ribbon is a connected skew diagram all of whose boxes have different content. The content of the boxes of a ribbon is an interval of the form $a, a+1, \ldots , a+n-1$, and the content values increase as we traverse the ribbon from SW to NE:
\begin{displaymath}
\begin{ytableau}
\none & \none & \none & 3 & 4 \\
\none & \none & 1 & 2 \\
-2 & -1 & 0
\end{ytableau}
\end{displaymath}
A {\em ribbon tableau} is a standard skew tableau whose shape is a ribbon.

A {\em staircase} is generally defined as any skew diagram of the form $(n, n-1, \ldots , 1)/\mu$ for some subshape $\mu$. For us, the inner shape $\mu$ of a staircase will always be $(n-1, n-2, \ldots, 1)$. In this case, writing $1, \ldots , n$ into the boxes in any order $w_1, \ldots , w_n$, we get a standard tableau:
\[
\ytableaushort{\none\none\none {w_n},\none \none \iddots, \none {w_2}, {w_1}}.
\]
Given $w = w_1 \ldots w_n \in S_n$, we write $S(w)$ for the {\em staircase tableau} of $w$, defined as above with the convention that we write the $w_i$ from SW to NE.

Such a staircase tableau is jdt-equivalent to a unique ribbon tableau by squeezing the gaps along the top boundary to make each box share an edge with its neighbors from the staircase. For example:

\[
\ytableaushort{\none\none\none 2,\none\none 4, \none 1,3} 
\longrightarrow 
\ytableaushort{\none\none2,\none4,1,3}
\longrightarrow
\ytableaushort{\none\none2,14,3}
\longrightarrow
\ytableaushort{\none2,14,3}
\]

Specifically, if $w_i < w_{i+1}$, we move $w_i$ up directly to the left of $w_{i+1}$, and if $w_i > w_{i+1}$, we move $w_{i+1}$ directly above $w_i$ (note that each such move can take several slides). This is equivalent to writing $w$ as a ribbon tableau by reading the word  $w_1 w_2 \cdots w_n$ from left to right, taking a step up when $w_{i+1} < w_i$ and a step to the right otherwise. Call this ribbon tableau $R(w)$, the {\em ribbon of $w$}, so we have $R(w) \cong S(w)$.

For example, for $w = 35164287$ as in the bumping subsection, $R(w)$ is
\begin{displaymath}
35164287
\xrightarrow{\hspace{0.5cm} \cong \hspace{0.5cm}}
\ytableaushort{\none\none\none 7,\none\none 28,\none\none 4,\none16,35} .
\end{displaymath}

Returning now to rectification, we illustrate the process for $R(w)$, filling in $\mu$ from the bottom row to the top:
\[
\ytableaushort{\none\none\none 7,\none\none 28,\none\none 4,\bullet 16,35} 
\longrightarrow
\ytableaushort{\none\none\none 7,\none\none 28,\none\bullet 4,156,3}
\longrightarrow
\ytableaushort{\none\none\none 7,\none\none 28,\bullet 46,15,3}
\longrightarrow
\ytableaushort{\none\none\none 7,\none\bullet 28,146,35}
\longrightarrow
\ytableaushort{\none\none\none 7,\bullet 268,14,35}
\longrightarrow
\]
\[
\longrightarrow
\ytableaushort{\none\none\bullet 7,1268,34,5}
\longrightarrow
\ytableaushort{\none\bullet 67,128,34,5}
\longrightarrow
\ytableaushort{\bullet 267,148,3,5}
\longrightarrow
\ytableaushort{1267,348,5}.
\]
The punch line is that we have recovered RSK’s insertion tableau $P(w)$, and we can read off the record tableau $Q(w)$ as well. The latter is because rectifying from the bottom to the top of the ribbon tableau as we did above corresponds to traversing the permutation left to right, and hence generates a sequence of partial standard tableaux $P(w_1)$, $P(w_1w_2)$, $P(w_1w_2w_3), \ldots$, which are the rectifications of the subwords $w_1 \ldots w_i$ for each $i$ (i.e., at the $i$-th step, we do not touch $w_{i+1} \ldots w_n$ yet). Stepping from the shape of each partial tableau to the next generates a path in Young’s lattice, as one box is added on at each step, and $Q(w)$ is the standard tableau corresponding to that path.

To see why bumping and sliding are really the same thing, consider what happens at an inner corner $\ytableaushort{\none \ , \ \ }$. The entry at the bottom right must be larger than those to the left and above, i.e., we have $\ytableaushort{\none z,xy}$, where $y>x$ and $y>z$. If $z>x$, sliding gives $\ytableaushort{\none z,xy} \longrightarrow \ytableausetup{boxsize=1.1em}\ytableaushort{xz,y}$, and if $z<x$, it gives $\ytableaushort{\none z,xy} \longrightarrow \ytableaushort{zy,x}$. This amounts to viewing the second row $\ytableaushort{xy}$ as the initial tableau and inserting $z<y$ via RSK: $\ytableaushort{xy} \longleftarrow z) \longrightarrow \ytableaushort{xz,y}$ if $z>x$ and $(\ytableaushort{xy} \longleftarrow z) \longrightarrow \ytableaushort{zy,x}$ if $z<x$. (If $z>y$, it goes at the end of the row and there is no bumping or sliding.)

These pictures illustrate the {\em Knuth moves} of the first and second kind, which correspond to the slides and bumps we just examined, but applied to the {\em row words} of our tableaux (flatten each tableau by concatenating the rows bottom to top, but without turning the result into a ribbon). I.e., if $y > z > x$, we have $xyz \longleftrightarrow yxz$, and if $y > x > z$, we have $xyz \longleftrightarrow xzy$. Equivalence under a sequence of Knuth moves generates an equivalence relation on words ({\em Knuth equivalence}) parallel to jdt-equivalence on tableaux. This parallelism is the basis of the standard proof (see Fomin’s appendix to \cite{bib_Sta}, or the first part of \cite{bib_Ful}) that RSK is an instance of rectification. We will give a parallel proof using representation theory, of which the Knuth moves are essentially a combinatorial shadow.

We will still need some combinatorics, for which we bring in a bit more terminology. Given two increasing sequences $u_1 < \cdots < u_m$ and $z_1 < \cdots < z_k$, which we implicitly think of as rows of tableaux, define their {\em plactic product} $u \cdot z$ to be the skew tableau of shape $(m+k,m)/(m)$ formed by writing the $z_i$ in the top row (to the right) and the $u_i$ in the second row (to the left):
\[
\ytableaushort{\none\none\none\none\none {z_1} \ \cdots \ {z_k} , {u_1} \ \cdots \ {u_m}}
\]

The following lemma, which is simple enough for a direct physical (geometric) proof, gives the most basic equivalence between bumping and sliding:

\begin{lemma} \label{plactic_insertion_box2row}
Let $u = u_1 < \cdots < u_m$ be a row, and let $z \neq u_i$ for any $i$. Then the plactic product $u \cdot z$ (considering $z$ as a $1$-letter word) is jdt-equivalent to the $P$-tableau obtained by inserting $z$ into $u$.
\end{lemma}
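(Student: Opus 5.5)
The plan is to perform a single inner slide into the unique inner corner of $u \cdot z$ and check that the result is exactly the insertion tableau. The skew shape $(m+1,m)/(m)$ has exactly one inner corner, the box $(1,m)$ directly above $u_m$ and directly left of $z$. There is one degenerate case: if $m=0$ the product is just the box $\ytableaushort{z}$, which equals the insertion tableau trivially. Assume $m\ge 1$ from now on.

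Write $j$ for the index such that $u_{j-1} < z < u_{j}$, using the conventions $u_0=-\infty$ and $u_{m+1}=+\infty$. The insertion $u \longleftarrow z$ then produces one of two outcomes. If $j=m+1$, it gives the one-row tableau $u_1\cdots u_m z$. Otherwise it gives the first row $u_1\cdots u_{j-1}\,z\,u_{j+1}\cdots u_m$ with $u_j$ alone in the second row.

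The core of the argument is to follow the hole during the slide into $(1,m)$.
\begin{itemize}
\item At $(1,m)$ the candidates are $z$ (to the right) and $u_m$ (below).
\item If $z>u_m$, i.e. $j=m+1$, then $u_m$ moves up. The hole is now at $(2,m)$, which has no neighbors, so the slide stops. Now the top row reads $u_m z$ and the bottom row reads $u_1\cdots u_{m-1}$.
\item If $z<u_m$, then $z$ moves left. The hole is at $(1,m+1)$, an outer corner, and the slide stops. The result is the straight two-row tableau with top row $z$ and bottom row $u_1\cdots u_m$.
\end{itemize}

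Neither of these is the insertion tableau yet, so I will continue with further inner slides. After the first slide, the inner shape is $(m-1)$, and its only inner corner is $(1,m-1)$.

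For $z>u_m$, each successive slide into $(1,k)$ compares $u_k$ (below) with $u_{k+1}$ (to the right). Since $u_k<u_{k+1}$, the entry $u_k$ moves up and the hole stops in the second row. Inductively, all of $u$ rises into the top row beside $z$, giving the single row $u_1\cdots u_m z$.

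For $z<u_m$, the top row at each stage is $z, u_{k+1}, \ldots$. The slide into $(1,k)$ compares $u_k$ below with $z$ to the right.
\begin{itemize}
\item While $u_k>z$, $z$ moves left. The hole is then at $(1,k+1)$, and it compares $u_{k+1}$ below with $u_{k+2}$ to the right. Since $u_{k+1}<u_{k+2}$, $u_{k+1}$ moves up, and the hole ends in the second row at the end of the slide.
\item The net effect is to pull $z$ left by one and lift $u_{k+1}$ into the top row, where it replaces nothing.
\end{itemize}
Tracking this, once $k$ passes down to $j$, the entries $u_{j+1},\dots,u_m$ have been lifted to the top row and $u_j$ remains below $z$.

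When $k<j$ we have $u_k<z$, so $u_k$ moves up and the hole stays in the second row. The remaining $u_1,\dots,u_{j-1}$ therefore rise into the top row, and $u_j$ shifts left into position $(2,1)$. The final tableau has top row $u_1\cdots u_{j-1} z u_{j+1}\cdots u_m$ and second row $u_j$, which is the bumped insertion tableau.

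The main obstacle is the bookkeeping in the case $z<u_m$. I will state an explicit invariant describing the tableau after filling inner corner $(1,k)$, separately for $k\ge j$ and $k<j$, and prove it by a downward induction on $k$. The invariant should pin down the top row and the bottom row. At each step, only the two comparisons described above need to be checked, and they follow from standardness of the row $u$ and the choice of $j$. Since each of these slides is a jdt slide, this establishes $u\cdot z \cong (u\longleftarrow z)$.
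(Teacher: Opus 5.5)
Your proposal is correct and follows essentially the same route as the paper: rectify $u\cdot z$ by $m$ successive inner slides into the single inner corner. In those slides $z$ moves left past the $u_k>z$, which then rise into the top row, while the $u_k<z$ rise to block it and $u_j$ drifts left to $(2,1)$; your downward-induction invariant makes the paper's informal trace precise. Two small slips: the argument needs $m$ slides, not the single slide your opening sentence announces, and when $z<u_m$ the tableau after the first slide has skew shape $(m,m)/(m-1)$, not a straight shape.
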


\begin{proof}
Rectify by trying to slide $z$ to the left. If $z > u_m$, then $u_m$ slides up to block $z$, the remaining $u_i < u_m$ all slide up, in turn, right to left, and we obtain a single row, corresponding to inserting $z$ at the end of $u$, with no bumping. Otherwise, $z$ slides in over $u_m$, and the next step is to try to slide $z$ left again. If $z > u_{m-1}$, then $u_{m-1}$ slides up to block $z$, the remaining $u_i < u_{m-1}$ slide up in turn, and finally $u_m$ slides over to the left, as it is larger than all other $u_i$. This corresponds to $z$ bumping $u_m$. Otherwise, $z$ slides left over $u_{m-1}$, $u_m$ slides up to the right of $z$, and we try to slide $z$ left again. Continuing, $z$ will keep sliding left, with the $u_i > z$ that it passes then sliding up to its right, until it reaches (from the right) an entry $u_{j-1}$ smaller than $z$. At this point, $u_j > z$ is directly below $z$, $u_{j-1}$ slides up to block $z$, $u_i$ for $i < j-1$ slide up as well, and finally $u_j$ slides all the way to the left. This corresponds to $z$ bumping $u_j$, which was the first entry of $u$ larger than $z$ (reading from the left). 
\[
\ytableaushort{\uparrow\uparrow\uparrow\uparrow z \uparrow\uparrow\uparrow\uparrow , {u_1} \ \cdots \ {u_j} \ \cdots \ {u_m}} \longrightarrow \ytableaushort{{u_1} \ \cdots \ z \ \cdots \ {u_m} , {u_j}}
\]
Finally, if $z < u_i$ for all $i$, then $z$ ends up all the way at the left with $u_1$ below it, and $u_2, \ldots u_m$ slide up to the right of $z$, corresponding to $z$ bumping $u_1$. We have shown that in all cases, sliding leads to the tableau obtained by inserting $z$ into $u$.
\end{proof}

Writing $uz$ for the concatenation of the (increasing) words $u$ and $z$, we can restate the lemma as $u \cdot z \cong P(uz)$, where $u$ is a word, $z$ is a single letter, and $P(uz)$ is the insertion tableau of the word $uz$.

The definition of plactic product extends naturally to multiple rows. Given three disjoint increasing sequences $u = u_1 < \cdots < u_m$, $w = w_1 < \cdots < w_l$, and $z = z_1 < \cdots < z_k$, we define their plactic product $u \cdot w \cdot z$ to be the skew tableau of shape $(m+l+k, m+l,m)/(m+l,m)$ with the $z_i$ written in the top row, the $w_i$ in the second, and the $u_i$ in the third:
\[
\ytableaushort{\none\none\none\none\none\none\none\none\none\none {z_1} \ \cdots \ {z_k} , \none\none\none\none\none {w_1} \ \cdots \ {w_l} , {u_1} \ \cdots \ {u_m}}.
\]
The plactic product of an arbitrary number of increasing rows is defined analogously (we will spare the reader the excessive notation).

We also define the plactic product of two disjoint {\em tableaux} $T, T'$ as the skew tableau obtained by taking an empty rectangle with as many rows as $T$ and as many columns as $T'$ (this corresponds to the inner shape $\mu$) and adjoining $T$ to its right edge and $T'$ to its bottom edge:
\[
\begin{tikzpicture}[line width=0.8pt]
    \def\rectW{4} 
    \def\rectH{2}
    
    \draw[dashed] (0,\rectH) -- (0,0);           
    \draw[dashed] (0,\rectH) -- (\rectW,\rectH); 
    \draw (\rectW,\rectH) -- (\rectW,0);         
    \draw (0,0) -- (\rectW,0);                   

    \draw (\rectW, \rectH) -- (\rectW+2.2, \rectH) -- (\rectW+2.2, \rectH-0.8) 
          -- (\rectW+1.2, \rectH-0.8) -- (\rectW+1.2, 0) -- (\rectW, 0);
    
    \node at (\rectW+0.6, \rectH/2) {$T$};

    \draw (0,0) -- (0,-2.2) -- (1.5,-2.2) 
          -- (1.5,-1.2) -- (\rectW,-1.2) -- (\rectW,0);
          
    \node at (\rectW/2, -0.6) {$T'$};

\end{tikzpicture}
\]
When both $T$ and $T'$ are single rows, this agrees with the previous definition.

Next, let us define the {\em plactic factorization} $F(T)$ of a standard tableau $T$ as the plactic product of its rows. (It may help to picture pulling the drawers out of a dresser, with the higher drawers sliding out past the lower ones.) Evidently, $F(T) \cong T$, as we can simply slide the drawers back in, with the standardness of $T$ forcing us to slide horizontally (to the left) at each step.

Now we can generalize Lemma \ref{plactic_insertion_box2row} to:

\begin{prop} \label{plactic_insertion_box2tableau}
Let $T$ be a standard (partial) tableau, and let $z$ be an entry that does not appear in $T$. Then the plactic product $T \cdot z$ is jdt-equivalent to the $P$-tableau obtained by inserting $z$ into $T$.
\end{prop}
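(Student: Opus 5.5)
Write $T$ via its rows $r_1, \ldots, r_k$ (top to bottom), so that $F(T) = r_k \cdot r_{k-1} \cdots r_1$ is the plactic factorization, and $T \cdot z$ is $T$ with $z$ attached as a one-box tableau up and to the right. The plan is to first replace $T$ by $F(T)$ and then perform the insertion one row at a time, using Lemma \ref{plactic_insertion_box2row} at each row.

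First I will show that $T \cdot z \cong F(T)\cdot z$, where $F(T)\cdot z$ denotes the plactic product of the $k+1$ rows $r_k, \ldots, r_1, z$. The reason is that sliding the drawers of $F(T)$ back in uses only inner slides confined to the rows of $T$. The box $z$ sits strictly north-east of everything, in a row and column of its own, so it never interferes with these slides. Concretely, after enlarging the empty region, both skew tableaux are jdt-equivalent to the same intermediate tableau. In that tableau the drawers are closed, and $z$ sits in its own row above $r_1$, to the right of every column of $T$. I must check that placing $z$ further to the right (extra empty columns) does not change the jdt class. This follows because filling inner corners in columns that contain no entries below the top row just shifts $z$ left one step at a time: each such slide moves only $z$.

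Next, the insertion. Look at the top two rows of the plactic product, $r_1 \cdot z$, as a skew tableau in their own right. By Lemma \ref{plactic_insertion_box2row}, $r_1\cdot z$ rectifies to a new first row $r_1'$ with a bumped letter $z_1$ below it, or with no bump. I will argue that performing these slides inside the full product $r_k\cdots r_2\cdot r_1\cdot z$ is legitimate. The rows below sit entirely to the left in a staggered product, so the inner corners used in rectifying $r_1\cdot z$ have no boxes of lower rows beneath them. Hence the slides coincide with those of the two-row computation.

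The result is a plactic product $r_k\cdots r_3\cdot r_2\cdot z_1\cdot r_1'$, up to jdt-equivalence. Here I must again pull the drawer $r_1'$ out and push the single box $z_1$ into position above $r_2$. This is where care is needed: I need $r_2 \cdot z_1 \cdot r_1' \cong$ (product with $z_1$ placed directly right-above $r_2$, and $r_1'$ further out). I would handle this by the same "drawer" sliding argument, using the commuting of disjoint slides. Then I iterate: apply Lemma \ref{plactic_insertion_box2row} to $r_2\cdot z_1$, get $r_2'$ and bumped $z_2$, and so on down the rows. This exactly follows the RSK insertion path. The process ends when some $z_j$ is appended to a row without bumping, or becomes a new bottom row.

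Finally, the result is a plactic product of the rows of the RSK-inserted tableau $T\leftarrow z$, which is standard by Lemma \ref{rsk_properties}. This product is $F(T\leftarrow z)\cong T\leftarrow z$, which proves the claim. The main obstacle I expect is the bookkeeping that rearranging the staggered positions of drawers does not change jdt-equivalence class. It amounts to showing that slides in disjoint, non-interacting regions (no box of one region lies weakly SE of a hole used by the other) can be performed independently. I would isolate that as a small sublemma about commuting slides before running the induction on the number of rows.
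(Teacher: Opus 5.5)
Your proposal is correct and is essentially the paper's proof: replace $T$ by $F(T)$, apply Lemma \ref{plactic_insertion_box2row} row by row along the bumping path, and then close the drawers of the result using the standardness of the inserted tableau from Lemma \ref{rsk_properties}. Your extra bookkeeping fills in details the paper leaves implicit: that $z$ is untouched while the drawers of $F(T)$ are closed and then slides left on its own, and that the two-row slides are unaffected by the other rows, because lower rows end strictly to the left and holes only move right or down. The repositioning of $r_1'$ between steps is unnecessary, since after rectifying $r_1\cdot z$ in place, $z_1$ already sits diagonally above and to the right of the last box of $r_2$, with $r_1'$ directly above $z_1$ and out of reach of later holes.
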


\begin{proof}
Since $T$ is jdt-equivalent to its plactic factorization $F(T)$, it suffices to prove that $F(T) \cdot z$ is jdt-equivalent to the result of inserting $z$ into $T$.

Let $r_1, \ldots , r_k$ be the rows of $T$, read from bottom to top (equivalently, from left to right in $F(T)$), so that $F(T) \cdot z = r_1 \cdots r_k \cdot z$. Applying Lemma \ref{plactic_insertion_box2row} to $r_k \cdot z$, we can replace the two top rows by the result of inserting $z$ into $r_k$, where the second row is just the single entry $z'$ bumped from $r_k$ (if there is one). Applying the Lemma again, to $r_{k-1} \cdot z'$, we replace the second and third rows by the result of inserting $z'$ into $r_{k-1}$, and now the third row is the single entry $z''$ bumped from $r_{k-1}$ (again, if there is one). Continue this, applying the Lemma in turn to the plactic product of each row with the entry bumped from the previous row. The process terminates either when there is no bump (if a just-bumped entry settles at the end of the row below it) or else when it runs out of rows (i.e., after inserting into $r_1$). Either way, it generates exactly the sequence of rows that results from inserting $z$ into $T$.

The final point is that we can arrange these rows into a straight tableau by sliding them all to the left independently. This follows from the fact that insertion into a standard tableau keeps it standard (second part of Lemma \ref{rsk_properties}), so that if we slide all the rows all the way to the left, we never have a larger entry appear directly above a smaller one.
\end{proof}

\begin{cor} \label{insertion_jdt}
Let $w \in S_n$, and let $R(w)$ and $S(w)$ be its associated ribbon and staircase tableaux. Then $R(w) \cong P(w)$ and $S(w) \cong P(w)$.
\end{cor}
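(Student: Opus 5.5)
Since $R(w) \cong S(w)$ was already established in the subsection on bumping and sliding, it suffices to prove $S(w) \cong P(w)$; the statement for $R(w)$ then follows by transitivity of $\cong$. I would argue by induction on $n$, proving more precisely that the staircase tableau of the first $i$ letters, $S(w_1 \cdots w_i)$ (read as a standard skew tableau on the letters $w_1,\ldots,w_i$), is jdt-equivalent to $P(w_1\cdots w_i)$. The case $i=1$ is trivial, since $S(w_1)$ is a single box.

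For the inductive step, I would observe that $S(w_1\cdots w_i)$ is, up to empty rows and columns of the inner shape that do not affect any slides, the plactic product $S(w_1\cdots w_{i-1}) \cdot w_i$. Concretely, $S(w_1\cdots w_{i-1})$ occupies the lower-left $(i-1)$ rows, and $w_i$ sits one row higher and one column further right, strictly NE of all of it. The key point to check is locality: jdt slides performed on a sub-skew-tableau that lies entirely SW of other boxes never interact with those boxes. This holds as long as the intermediate shapes of the lower piece stay within the region below and to the left of $w_i$'s box, which is true because inner slides only shrink the occupied region toward the origin.

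Hence I can first rectify the lower part, using the inductive hypothesis to replace $S(w_1\cdots w_{i-1})$ by $T = P(w_1\cdots w_{i-1})$, while leaving $w_i$ in place. I would then slide $w_i$ so that the configuration becomes exactly $T\cdot w_i$. This amounts to moving it left or down along empty inner rows and columns, which is again a sequence of slides that do not cross entries of $T$; alternatively, one can note that jdt-equivalence is unchanged by deleting empty rows or columns from the inner shape. Proposition \ref{plactic_insertion_box2tableau} then gives $T\cdot w_i \cong P(T \leftarrow w_i) = P(w_1\cdots w_i)$, which completes the induction. Taking $i=n$ yields $S(w) \cong P(w)$.

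The main obstacle is making precise the locality and embedding claims: that jdt-equivalence of a subtableau extends to the larger skew tableau when the extra box lies strictly NE, and that the resulting skew shape can be adjusted to the exact plactic-product shape $T\cdot w_i$ (a rectangle of $T$'s row count by one column). I would handle both by a direct geometric argument: slides into inner corners of the lower piece only move entries of that piece, and extra empty inner rows or columns can be removed by slides that move entire blocks rigidly.
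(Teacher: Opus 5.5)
Your route is the paper's: reduce to $S(w)$, peel off the last letter, use the inductive hypothesis to replace $S(w_1\cdots w_{i-1})$ by $P(w_1\cdots w_{i-1})$, and finish with Proposition~\ref{plactic_insertion_box2tableau}. The paper does this in one line and leaves the embedding step implicit, so isolating it is the right instinct. You even state the right condition: the intermediate shapes of the lower piece must stay strictly SW of $w_i$.

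The reason you give for that condition, however, fails. The inductive hypothesis only provides a jdt-equivalence $S(w_1\cdots w_{i-1})\cong P(w_1\cdots w_{i-1})$, and the chain realizing it genuinely uses outer slides. In the proof of Proposition~\ref{plactic_insertion_box2tableau}, passing from $T\cdot z$ to $F(T)\cdot z$ pulls the rows out to the right by outer slides, which enlarge both the inner and the outer shape. So ``inner slides only shrink the occupied region'' does not cover the chain.

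Nor can you simply ``rectify the lower part'' instead. Knowing that $S(w_1\cdots w_{i-1})$ reaches $P(w_1\cdots w_{i-1})$ by inner slides alone is Corollary~\ref{cor_rsk_rect}, which rests on the confluence theorem proved only later. A priori, an outer slide of the lower piece could push its top row under or past $w_i$, giving an illegal shape or a non-standard column.

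The repair is short. Before running the chain, shift $w_i$ to the right by outer slides into the outer corner immediately to its right. Each such slide moves only $w_i$, one column, because the box to its left is an inner box. Repeat until $w_i$ is strictly to the right of every intermediate shape of the (finite) chain. Then every slide of the chain, inner or outer, embeds verbatim:
inner-slide holes start below $w_i$'s row and only move down or right;
an outer-slide hole arriving in the row just below $w_i$ sees only an inner box above it.
Afterwards, slide $w_i$ back to the left by inner slides until it sits just right of the end of the first row of $T$. Alternatively, check inductively that the chains built by the paper stay within width $i-1$, since $F(T)\cdot z$ has width $|T|+1$.

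There is also one small slip. With $w_i$ NE of $T$, as you rightly place it and as Proposition~\ref{plactic_insertion_box2tableau} uses it, the empty part of $T\cdot w_i$ is a single row whose length is that of the first row of $T$. It is not a rectangle with as many rows as $T$ and one column. The paper's two-tableau definition, read literally, swaps the factors relative to its row version. Following it would put $w_i$ SW of $T$, which in general rectifies to a different tableau.
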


\begin{proof}
Since $R(w) \cong S(w)$, it suffices to give a proof for $S(w)$. We induct on the number of letters in $w$. Write $w = w_1 \cdots w_n$ and $w' = w_1 \cdots w_{n-1}$, so that $w = w' w_n$ (concatenation). By the inductive hypothesis, $S(w') \cong P(w')$, so $S(w) \cong P(w') \cdot w_n$ (plactic product), which is jdt-equivalent to $P(w)$ by the Proposition.
\end{proof}

Corollary \ref{insertion_jdt} gets us close to the statement that the rectification of the ribbon or staircase of $w$ is the insertion tableau of $w$. However, we are not all the way there yet, because rectification as we’ve defined it consists of inner slides only, whereas the jdt-equivalences generated in this subsection include outer slides as well, as we have needed to convert partial insertion tableaux into their plactic factorizations to give us room to work.

The additional fact we need is that any two rectification paths (that is, inner slide sequences from the same skew tableau to some straight tableau) end up at the same straight tableau, a property known as {\em confluence}.\footnote{In his appendix to \cite{bib_Sta}, Fomin calls this the {\em Fundamental Theorem of Jeu de Taquin.}} While this is not combinatorially obvious, representation theory will make it transparent.

\subsection{Sliding is Switching} \label{sec_slide_switch}
To reach a point where we can apply representation theory, we need to connect RSK and Jeu de Taquin to group actions. The following simple lemma reframes sliding in terms of iterated transpositions, providing the exact bridge that we need:
\begin{lemma} \label{lemma_slide_switch}
Let $T$ be a standard skew tableau with $n$ boxes. The inner slide $\jdt_c(T)$ into an inner corner $c$ can be realized by the following procedure:
\begin{enumerate}
\item Increment each entry of $T$ by $1$, from $1,\ldots ,n$ to $2, \ldots , n+1$.
\item Write a $1$ in the inner corner $c$ into which we will slide, obtaining a new skew tableau $T'$.
\item Act on $T'$ by a successive sequence of adjacent transpositions $\sigma_1, \ldots , \sigma_n$, where $\sigma_i$ switches $i$ and $i+1$ if they are not in adjacent boxes (so the tableau after the switch is still standard), and does nothing otherwise.\footnote{This is the simplest case of the {\em Bender-Knuth involutions} \cite{bib_BK} and their generalizations \cite{bib_BSS}.}
\item Erase $n+1$, which appears in an outer box of the resulting tableau.
\end{enumerate}
An outer slide $\jdt^c(T)$ into an outer corner $c$ can be realized by the reverse procedure, starting from writing $n+1$ at $c$ and ending by removing $1$ from an inner corner and decrementing all the other values by $1$.
\end{lemma}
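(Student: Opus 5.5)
The plan is to prove by induction on the number of switch steps that the procedure tracks the hole of the inner slide. The hole is encoded by the entry $1$ (later $i$), and the entry about to be examined is $i+1$. Concretely, the invariant after applying $\sigma_1,\ldots,\sigma_{i-1}$ is the following. The tableau $T'_{i}$ is obtained from the partially slid tableau in the middle of $\jdt_c(T)$ by writing $i$ in the current hole. All entries $2,\ldots,i$ of the original incremented tableau that have already been passed either sit in their slid positions or have stayed put. Finally, $T'_i$ is standard. For $i=1$ this is exactly steps (1) and (2), since $1$ is smaller than everything and sits at the inner corner $c$, so $T'$ is standard.

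For the inductive step, suppose $i$ occupies the hole at position $(p,q)$. Standardness of $T'_i$ means every entry smaller than $i$ lies weakly NW of it. Every entry larger than $i$ is unaffected so far and carries its original value plus $1$. The jdt rule says the hole is filled by the smaller of the entries at $(p+1,q)$ and $(p,q+1)$. I would then show that $i+1$ is either adjacent to the hole (at $(p+1,q)$ or $(p,q+1)$) or not, and split into cases.

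\begin{itemize}
\item If $i+1$ is at one of these two neighbors, it is automatically the minimum of the two, since it is the least value exceeding $i$. The jdt move then slides $i+1$ into the hole. In the switched picture, this means swapping the labels $i$ and $i+1$. That swap is exactly $\sigma_i$, and it is legal because the result remains standard. The one subtle point is the footnoted convention: ``not in adjacent boxes'' must be read as \emph{not in the same row or column adjacently in a way that would break standardness}. I would check the convention carefully here. The swap happens precisely when $i$ and $i+1$ are adjacent to the hole, which is the Bender--Knuth move. So the statement's phrasing needs to be read as the Bender--Knuth convention, namely swap unless doing so violates standardness.
\item If $i+1$ is not adjacent, then it lies neither directly right of nor directly below the hole. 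It also cannot be weakly NW, since it is larger than everything already placed. The jdt move does not touch it, and we simply relabel the hole value $i$ to $i+1$ and the value $i+1$ to $i$. I would argue this is again consistent with the invariant: the value $i+1$ (originally $i$) is not adjacent to anything smaller in a way that breaks standardness, so the swap keeps the tableau standard.
\end{itemize}

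Getting the case analysis of ``adjacent'' exactly right against the paper's convention is the main obstacle. I would first verify on the example \eqref{sample_inner_slide}.

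At termination, the hole carries value $n+1$. The hole has reached a position with no boxes below or to the right, so it is an outer box, and erasing it gives $\jdt_c(T)$ with the original values restored by the relabeling. Note that step (1) is undone implicitly, because every non-hole entry has been decremented exactly once, when passed. The outer slide statement follows by reversing: each $\sigma_i$ is an involution on standard tableaux, so running $\sigma_n,\ldots,\sigma_1$ from $n+1$ at $c$ retraces the outer slide, which is the inverse of an inner slide by definition.
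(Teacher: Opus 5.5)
Your induction tracks the right quantity, the label carried by the hole, but your first case ($i+1$ directly right of or below the hole) is inverted, and the lemma's content sits in exactly that step. In that case the switch is never legal. Exchanging $i$ and $i+1$ would put $i+1$ immediately to the left of or above $i$, so your claim that the swapped tableau remains standard is false.

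Nor is the swap what the slide needs. Because of the initial increment, the label $i+1$ in the neighboring box $p$ is the original entry $i$. The jdt move carries this original $i$ from $p$ into the hole $h$ and makes $p$ the new hole. After step $i$ you therefore need label $i$ at $h$ (the slid entry, already at its final value) and label $i+1$ at $p$ (the new hole). That is exactly the tableau before $\sigma_i$ acts, so $\sigma_i$ must act trivially here, as the lemma says. If it swapped instead, $p$ would carry $i$ and $h$ would carry $i+1$, and your invariant ("the hole now carries $i+1$") would fail at the first slide move.

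The smallest example already shows this. Take the one-box shape $(2)/(1)$ with inner corner $c=(1,1)$. Doing nothing and erasing $2$ gives the slid tableau. Swapping and then erasing $2$ gives back the unslid tableau, and the erased box $(1,1)$ is not even an outer box.

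Your second case is correct and is precisely the paper's argument: when $i+1$ is not adjacent to the hole, the switch restores the original value $i$ to a box the slide never touches and raises the hole label to $i+1$. Combined with the corrected first case, your induction becomes the paper's proof. The non-trivial switches $\sigma_k,\ldots,\sigma_{l-1}$ cycle the hole label upward while restoring untouched entries, and each trivial $\sigma_l$ is one jdt move.

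Nothing in the statement needs reinterpreting. Rows and columns of a skew shape are contiguous, so in a standard skew tableau $i$ and $i+1$ in a common row or column must be adjacent. Hence ``not in adjacent boxes'' already means ``the switch preserves standardness'', which is the Bender--Knuth convention you invoke. The slide moves are the no-ops of this convention, not its switches.

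Your statement that standardness puts every smaller entry weakly NW of the hole is false. What you actually use is the converse: any box weakly NW of the hole lies in the shape together with the rectangle between them, so it carries a smaller entry.

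Your deduction of the outer-slide statement from the inner one is a legitimate alternative to the paper's direct reverse trace. It uses that each conditional switch is an involution, since switching does not change whether $i$ and $i+1$ are adjacent. But it inherits the inner-slide argument, so it only stands once the first case is fixed.
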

To clarify the process and how it corresponds to sliding, we trace through it for the inner slide of the skew tableau given in (\ref{sample_inner_slide}):
\[
\ytableaushort{\none\none 139, \none\bullet 27, 4568}
\longrightarrow
\ytableaushort{\none\none 24{10}, \none 138, 5679}
\xrightarrow{(1 \ 2)}
\ytableaushort{\none\none 14{10}, \none 238, 5679}
\xrightarrow[{\rm skip}]{(2 \ 3)}
\ytableaushort{\none\none 14{10}, \none 238, 5679}
\xrightarrow{(3 \ 4)}
\]
\[
\xrightarrow{(3 \ 4)}
\ytableaushort{\none\none 13{10}, \none 248, 5679}
\xrightarrow{(4 \ 5)}
\ytableaushort{\none\none 13{10}, \none 258, 4679}
\xrightarrow{(5 \ 6)}
\ytableaushort{\none\none 13{10}, \none 268, 4579}
\xrightarrow[{\rm skip}]{(6 \ 7)}
\ytableaushort{\none\none 13{10}, \none 268, 4579}
\xrightarrow{(7 \ 8)}
\]
\[
\xrightarrow{(7 \ 8)}
\ytableaushort{\none\none 13{10}, \none 267, 4589}
\xrightarrow[{\rm skip}]{(8 \ 9)}
\ytableaushort{\none\none 13{10}, \none 267, 4589}
\xrightarrow{(9 \ 10)}
\ytableaushort{\none\none 139, \none 267, 458{10}}
\longrightarrow
\ytableaushort{\none\none 139, \none 267, 458}
\]

\begin{proof}
We give the proof for inner slides. Recall that, as illustrated in (\ref{sample_inner_slide}), an inner slide moves a hole $\ytableaushort{\bullet}$ from an inner corner of a skew tableau southeast across the tableau, until it exits at an outer corner. We need to work out what each application of $\sigma_i$ in the procedure above does to the hole and to the entries of each box.

The initial location of the hole contains a $1$ at the start of the process, and has at least one adjacent box immediately below or to the right because it is at an inner corner. Let $j$ be the smaller of the original entries in these two boxes (assuming both exist, otherwise let $j$ be the only one). Thus, the smallest entry adjacent to $1$ is $j+1$ since we incremented all the original entries by $1$.

Consider an application of $\sigma_i$ that switches $i$ and $i+1$ (non-adjacent case). Since we apply the $\sigma_i$ in increasing order, this is the first contact with the box containing $i+1$. Since we incremented each entry of our tableau by $1$ before starting the sequence of switches, applying $\sigma_i$, which replaces $i+1$ by $i$, just restores the value $i$ that was originally in the box. All the transpositions that follow, starting from $\sigma_i$, do not touch $i$, so $i$ ends up in the same box where it started. This shows that $\sigma_1, \ldots , \sigma_{j-1}$, which cycle $2, \ldots , j$ through the original location of the hole, end up simply restoring the boxes that originally contained $1, \ldots , j-1$ to their original values.

After applying $\sigma_1, \ldots , \sigma_{j-1}$, we have $j$ written in the original location of the hole, and $j+1$ in an adjacent box. So $\sigma_j$ leaves the tableau alone, thus leaving $j$ in this location for the rest of the process. The result so far is that we have slid $j$ into the original hole, leaving $j+1$ written in the new (adjacent) hole, which is the original location of $j$ by the original $+1$ increment. This is exactly what JDT prescribes.

Now all we need to do is iterate this logic. Let $k$ be the value written in the box representing the new hole immediately after a slide (i.e., an application of $\sigma_{k-1}$ that had no impact, as $k-1$ was adjacent to $k$). Let $l$ be the smaller of the values below and to the right. Applying $\sigma_k, \ldots , \sigma_{l-1}$ cycles $k+1, \ldots , l$ through the location of the new hole, and restores $k, \ldots , l-1$ to their original values. Applying $\sigma_l$ leaves the tableau alone, leaving $l$ in the location of the hole and $l+1$ representing a new hole at the location where $l$ slid in from.

We continue the process until the hole reaches an outer corner. If $m$ is the entry written in the hole immediately after the last slide, and $n$ is the total number of boxes in the original tableau (so the largest entry in the modified tableau is $n+1$), then $\sigma_m, \ldots , \sigma_n$ restore $m, \ldots , n$ to their original boxes and leave $n+1$ in the outer corner hole. Forgetting about $n+1$ completes the proof for inner slides.

For outer slides, just reverse the process. Each $\sigma_i$ that switches $i$ and $i+1$ {\em increases} the value in a non-sliding box by $1$, and all of these get restored to their original values by a $-1$ decrement at the end. Similarly, if $i$ and $i+1$ are adjacent, with the larger value $i+1$ now representing a hole, $\sigma_i$ leaves $i$ and $i+1$ in place, with $i+1$ filling the current hole and $i$ representing the location of the new hole. The $i+1$ decrements to $i$ at the end, corresponding to $i$ sliding into the hole, while the hole continues on its journey northwest from its new location until it reaches an inner corner.
\end{proof}

\section{Affine Hecke Algebra Weights} \label{sec_hecke_rep_theory}
\subsection{Internal and External Translations} \label{sec_hecke_trans}
In this section, we summarize a few standard results on (degenerate) affine Hecke algebra representations and their weight bases, corresponding to the permutation side of RSK. In the next section, we will realize the degenerate AHA in the setting of the regular representation of the symmetric group, corresponding, on the weight basis level, to the tableau pair side, and thus yielding the RSK correspondence. All algebras and representations will be over $\mathbb{C}$.

The {\em degenerate affine Hecke algebra} $H_n$ is an extension of $\mathbb{C}[S_n]$, the group algebra of $S_n$. $H_n$ is generated over $\mathbb{C}$ by $\mathbb{C}[S_n]$ together with {\em commuting} external translations, $Y_1, \ldots , Y_n$, that satisfy the following relation with the simple transpositions $\sigma_i = (i \ i+1)$ that generate $S_n$:
\begin{eqnarray}
\sigma_i Y_i \sigma_i + \sigma_i &=& Y_{i+1} , \label{hecke_rel1} \\
\sigma_j Y_k &=& Y_k \sigma_j {\mbox{\rm \hspace{1cm} if $j \neq k-1, k$}}. \label{hecke_rel2}
\end{eqnarray}
where the $\sigma_i$ satisfy the usual Coxeter relations in $S_n$:
\begin{eqnarray}
\sigma_i^2 &=& 1 \\
\sigma_i \sigma_j &=& \sigma_j \sigma_i {\mbox{\rm \hspace{1cm} if $|i-j| > 1$}} \label{transp_commute} \\
\sigma_i \sigma_{i+1} \sigma_i &=& \sigma_{i+1} \sigma_i \sigma_{i+1} \label{braid_rel}
\end{eqnarray}
The relations (\ref{hecke_rel1})-(\ref{hecke_rel2}) abstract the behavior of the {\em Jucys-Murphy} elements inside $\mathbb{C}[S_n]$(Okounkov-Vershik reference), which are defined as
\begin{equation}
X_i = (1 \ i) + (2 \ i) + \cdots + (i-1 \ i) \label{JM_def}
\end{equation}
for $i=2, \ldots n$, and the convention is that $X_1 = 0$. Explicitly, we have

\begin{lemma} (Hecke properties of Jucys-Murphy elements)
\begin{enumerate}
\item $X_i$ commutes with $S_{i-1} \subset S_n$ (here $S_{i-1} = \Stab(\{i, i+1, \ldots , n \})$, i.e., it acts only on $\{1, \ldots , i-1 \}$ ).
\item The $X_i$ commute with each other.
\item The $X_i$ and $\sigma_i$ satisfy the Hecke relations (\ref{hecke_rel1})-(\ref{hecke_rel2}).
\item If $M$ is an $H_n$-module such that $Y_1 = 0$ on $M$, then $Y_i = X_i$ on $M$.
\end{enumerate}
\end{lemma}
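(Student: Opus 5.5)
Each of the four parts reduces to a short computation in $\mathbb{C}[S_n]$, using only conjugation of transpositions, $\pi (a\ b)\pi^{-1} = (\pi(a)\ \pi(b))$, and the defining relations (\ref{hecke_rel1})--(\ref{hecke_rel2}). I would treat the parts in order.

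For part (1), conjugating $X_i = \sum_{j<i}(j\ i)$ by any $\pi \in S_{i-1}$ fixes $i$ and permutes $\{1,\ldots,i-1\}$. It therefore permutes the summands of $X_i$, so $\pi X_i \pi^{-1} = X_i$.

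For part (2), take $j<i$. Then $X_j$ is a sum of transpositions supported in $\{1,\ldots,j\} \subset \{1,\ldots,i-1\}$, so $X_j \in \mathbb{C}[S_{i-1}]$. By part (1), $X_j$ commutes with $X_i$.

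For part (3), the second relation (\ref{hecke_rel2}) has two cases:
\begin{itemize}
\item If $j \ge k+1$, then $\sigma_j$ fixes every letter appearing in $X_k$, so conjugating each summand gives back the same summand and $\sigma_j$ commutes with $X_k$.
\item If $j \le k-2$, then $\sigma_j \in S_{k-1}$ and part (1) applies.
\end{itemize}
For the first relation (\ref{hecke_rel1}), conjugate $X_i$ by $\sigma_i$: each summand $(j\ i)$ with $j<i$ becomes $(j\ i+1)$. Hence
\[
\sigma_i X_i \sigma_i = \sum_{j<i}(j\ i+1) = X_{i+1} - (i\ i+1) = X_{i+1} - \sigma_i,
\]
which is exactly (\ref{hecke_rel1}). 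The case $i=1$ is consistent with the convention $X_1=0$, since it reads $0 + \sigma_1 = X_2$.

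For part (4), I would induct on $i$. The base case $Y_1 = 0 = X_1$ on $M$ is the hypothesis. Suppose $Y_i = X_i$ as operators on $M$, where $X_i$ acts through the $S_n \subset H_n$ action. Relation (\ref{hecke_rel1}) in $H_n$ gives
\[
Y_{i+1} = \sigma_i Y_i \sigma_i + \sigma_i = \sigma_i X_i \sigma_i + \sigma_i \quad \text{on } M,
\]
and by part (3) the right-hand side equals $X_{i+1}$.

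The only real obstacle is bookkeeping, namely the index conventions in (\ref{hecke_rel2}), where the excluded values are $j = k-1, k$. Everything else is direct.
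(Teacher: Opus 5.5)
Your proof is correct and follows the paper's argument essentially step for step. Part (1) permutes the summands under conjugation, part (2) uses $X_j \in \mathbb{C}[S_{i-1}]$ for $j<i$, part (3) conjugates by $\sigma_i$ and splits into the cases $j \le k-2$ and $j \ge k+1$, and part (4) inducts via relation (\ref{hecke_rel1}); the only cosmetic difference is that you conjugate by an arbitrary $\pi \in S_{i-1}$ rather than by the generators $\sigma_1, \ldots, \sigma_{i-2}$.
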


\begin{proof}
The key is the first statement, whose idea is that $X_i$ is obtained from $\sigma_{i-1}$ by symmetrizing it by the iterated conjugation action of $\sigma_{i-2}, \sigma_{i-3}, \ldots , \sigma_1$, which generate $S_{i-1}$. Explicitly, we can read off (\ref{JM_def}) that $\sigma_1, \ldots , \sigma_{i-2}$ acting by conjugation fix $X_i$, i.e., they commute with it, and thus all of $S_{i-1}$ must as well. The second statement follows from the first by induction on $i$, since $X_1, \ldots , X_{i-1}$ lie in $\mathbb{C}[S_{i-1}]$ and hence commute with $X_i$. For the third statement, conjugating (\ref{JM_def}) by $\sigma_i$ gives (\ref{hecke_rel1}). For (\ref{hecke_rel2}), the first statement implies that $X_j$ commutes with $\sigma_1, \ldots , \sigma_{j-2}$, while $\sigma_{j+1}, \ldots \sigma_{n-1}$ act only on $\{j+1, \ldots , n \}$ and hence must commute with $X_j$ also. To prove the final statement, we induct on $i$, with $Y_i = 0$ covering the base case and equation (\ref{hecke_rel1}), which determines $Y_{i+1}$ in terms of  $Y_i$, covering the inductive step.
\end{proof}
A restatement of the lemma is that we have a surjective homomorphism of algebras  $\psi: H_n \to \mathbb{C}[S_n]$ taking $Y_i \mapsto X_i$. Given an irreducible representation of $S_n$, pulling back via $\psi$ turns it into an irreducible representation of $H_n$. While this is an initial source of finite-dimensional representations of $H_n$, we will now construct more. The idea is that the ``external’’ translations $Y_i$ can act with fewer constraints than the ``internal’’ $X_i$, and thus can act between irreducible $S_n$-modules to generate larger representations. As we will see, going from $H_n$-modules back to $S_n$-modules can be expressed in terms of turning external translations back into internal ones.

\subsection{Finite-Dimensional $H_n$-Modules} \label{sec_Hn_reps}
While the relation (\ref{hecke_rel1}) is easy to remember (and to rederive for the JM elements), for most calculations it is convenient to multiply it by $\sigma_i$ on the left and, separately, on the right, obtaining
\begin{equation}
\sigma_i Y_{i+1} - Y_i \sigma_i = Y_{i+1} \sigma_i - \sigma_i Y_i = 1. \label{hecke_rel3}
\end{equation}
(One can remember this in terms of the indices of the $Y_i$: $i+1 - i = 1$.) If we are given any combination of $\sigma_j$ and $Y_i$, we can use (\ref{hecke_rel2}) and (\ref{hecke_rel3}) to slide the $Y_i$ to the right, which implies that monomials of the form $\sigma \cdot Y_1^{d_1} \cdots Y_n^{d_n}$, where $\sigma \in S_n$ and $d_i \geq 0$, form a basis for $H_n$. We will call this the {\em monomial basis}. $\mathbb{C}[S_n] \subset H_n$ corresponds to $d_i = 0$ for all $i$.

We can construct finite-dimensional representations of $H_n$ as follows. Let $a_1, \ldots , a_n$ be arbitrary complex numbers, and let $v$ be a vector (the vacuum) such that $Y_i \cdot v = a_i v$. $\mathbb{C}v$ is a $1$-dimensional representation of $\mathbb{C}[Y_1, \ldots , Y_n]$, and we define the induced representation
\[
V(a_1, \ldots , a_n) = \Ind_{\mathbb{C}[Y_1, \ldots , Y_n]}^{H_n} = H_n \otimes_{\mathbb{C}[Y_1, \ldots , Y_n]} \mathbb{C} v.
\]
$V = V(a_1, \ldots , a_n)$ has dimension $n!$ (act on $v$ by the monomial basis), and is isomorphic to $\mathbb{C}[S_n]$ as an $S_n$-module. If the $a_i$ are all distinct, which we assume from now on, all the $Y_i$ are diagonalizable as operators on $V$, so they are simultaneously diagonalizable since they commute. A common eigenvector of the $Y_i$ is called a {\em weight vector}, and the eigenvalues of the $Y_i$ are its {\em weights}.

Thus, we should have $n!$ weight vectors in $V(a_1, \ldots , a_n)$, which we can generate as follows. For $i = 1, \ldots , n-1$, define elements $\varphi_i \in H_n$ by:
\begin{equation}
\varphi_i = \sigma_i (Y_i - Y_{i+1}) + 1.
\end{equation}

\begin{lemma} \label{lemma_intertwiners}
The $\varphi_i$ satisfy the following basic properties: 
\begin{enumerate}
\item $\varphi_i^2 = 1 - (Y_i - Y_{i+1})^2 = (1 + Y_i - Y_{i+1})(1 - (Y_i - Y_{i+1}))$, \label{phi_square}
\item $\varphi_i Y_i = Y_{i+1} \varphi_i$ and $Y_i \varphi_i = \varphi_i Y_{i+1}$, \label{eigenexchange}
\item $\varphi_i Y_j = Y_j \varphi_i$ if $|i-j| > 1$, \label{eigenexchange2}
\item \label{eigenexchange3}
If $f(Y_1, \ldots , Y_n)$ is any polynomial in the $Y_i$, then 
\[ 
\varphi_i f(Y_1, \ldots , Y_n) = (\sigma_i \cdot f)(Y_1, \ldots , Y_n) \varphi_i,
\]
where $\sigma_i$ acts on $f$ by permuting its $i$-th and $i+1$-st arguments($Y_i$ and $Y_{i+1}$),
\item $\varphi_i \varphi_j = \varphi_j \varphi_i$ if $|i-j| > 1$, 
\item Braid/Yang-Baxter relation: $\varphi_i \varphi_{i+1} \varphi_i = \varphi_{i+1} \varphi_i \varphi_{i+1}$. \label{braid_rel_int}
\end{enumerate}
\end{lemma}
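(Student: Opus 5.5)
The plan is to derive everything from the relations (\ref{hecke_rel2}) and (\ref{hecke_rel3}). In the form needed here they read $\sigma_i Y_i = Y_{i+1}\sigma_i - 1$ and $\sigma_i Y_{i+1} = Y_i \sigma_i + 1$. Set $D_i = Y_i - Y_{i+1}$. Subtracting the two relations gives
\[
\sigma_i D_i = -D_i \sigma_i - 2, \qquad\text{hence}\qquad \varphi_i = \sigma_i D_i + 1 = -D_i\sigma_i - 1 .
\]
This two-sided expression for $\varphi_i$ is the workhorse for the first four parts.

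For part (\ref{phi_square}), expand $(\sigma_i D_i + 1)^2 = \sigma_i D_i \sigma_i D_i + 2\sigma_i D_i + 1$. Then use $\sigma_i D_i \sigma_i = (-D_i\sigma_i - 2)\sigma_i = -D_i - 2\sigma_i$. The cross terms cancel and leave $1 - D_i^2$, which factors as stated.

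For part (\ref{eigenexchange}), note that $D_i$ commutes with $Y_i$. So $\varphi_i Y_i = \sigma_i Y_i D_i + Y_i = (Y_{i+1}\sigma_i - 1)D_i + Y_i$, and since $Y_i - D_i = Y_{i+1}$ this equals $Y_{i+1}(\sigma_i D_i + 1) = Y_{i+1}\varphi_i$. The companion identity $Y_i\varphi_i = \varphi_i Y_{i+1}$ follows the same way, most easily from the form $\varphi_i = -D_i\sigma_i - 1$.

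Part (\ref{eigenexchange2}) is immediate: by (\ref{hecke_rel2}) the element $\sigma_i$ commutes with $Y_j$ for $j \neq i, i+1$, and the $Y$'s commute among themselves. Part (\ref{eigenexchange3}) follows from (\ref{eigenexchange}) and (\ref{eigenexchange2}) by multiplicativity. Both sides of the identity are linear in $f$, and $f \mapsto \sigma_i\cdot f$ is an algebra automorphism, so it suffices to check monomials; for these we induct on degree, commuting $\varphi_i$ past one $Y_k$ at a time. Part (5) holds because, for $|i-j|>1$, the element $\sigma_i$ commutes with $\sigma_j$, $Y_j$ and $Y_{j+1}$, and $\sigma_j$ likewise commutes with $Y_i$ and $Y_{i+1}$.

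The braid relation (\ref{braid_rel_int}) is the main obstacle. Rather than expanding both triple products brute-force, I would argue structurally. Let $s = \sigma_i\sigma_{i+1}\sigma_i$ and $\Delta = \varphi_i\varphi_{i+1}\varphi_i - \varphi_{i+1}\varphi_i\varphi_{i+1}$. Applying (\ref{eigenexchange3}) three times to each product shows that both satisfy $X f = (s\cdot f) X$, using $\sigma_i\sigma_{i+1}\sigma_i = \sigma_{i+1}\sigma_i\sigma_{i+1}$ in $S_n$. Hence $\Delta f(Y) = (s\cdot f)(Y)\,\Delta$ for every polynomial $f$.

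Now write $\Delta = \sum_w w\, g_w(Y)$ in the monomial basis. Only $w$ in the copy of $S_3$ generated by $\sigma_i,\sigma_{i+1}$ occur, and the total degree is at most $3$. Moving $Y$'s to the right past $w$ via (\ref{hecke_rel3}) only creates terms with strictly shorter permutations. So comparing a Bruhat-maximal $w$ with $g_w\neq 0$ on both sides of $\Delta Y_k = (s\cdot Y_k)\Delta$ forces $w^{-1}\cdot Y_k = s^{-1}\cdot Y_k$ for all $k$, i.e.\ $w = s$. Since $s$ is the top element of this $S_3$, this is consistent, and it reduces the problem to showing $g_s = 0$ and then re-running the argument on $\Delta - s\,g_s$.

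To show $g_s = 0$, compute the $s$-component of each triple product. It arises only from choosing the $\sigma D$ term in every factor, and commuting the $D$'s to the right contributes only shorter permutations. For the first product the result is $s\,(Y_{i+1}-Y_{i+2})(Y_i - Y_{i+2})(Y_i-Y_{i+1})$, and the second gives the same product of the three differences. Hence $g_s = 0$, and the twisted-commutation argument then forces $\Delta = 0$. If the leading-term bookkeeping proves delicate, the fallback is to verify the identity on the modules $V(a_1,\ldots,a_n)$ for all generic $a$, where $H_n$ acts faithfully on the family, by checking it on the vacuum and its $S_3$-translates.
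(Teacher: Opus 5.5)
Your proposal is correct. Parts (1)--(5) are the same direct Hecke-relation computations the paper does; your two-sided form $\varphi_i=\sigma_iD_i+1=-D_i\sigma_i-1$ just organizes them more cleanly.

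For the braid relation you take a genuinely different route. The paper expands $\varphi_i\varphi_{i+1}\varphi_i$ and $\varphi_{i+1}\varphi_i\varphi_{i+1}$ completely in the monomial basis. It observes that the two expansions agree except in the top word, where the braid relation of $S_n$ finishes the job. You instead use part (4) to get the twisted commutation $\Delta f=(s\cdot f)\Delta$. A leading-term argument then shows that a nonzero element with this property must have $s$ as its top term, so you only need to compare the coefficients of $s$.

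What this buys you is freedom from all lower-order bookkeeping. That is exactly where the paper's displayed expansion slips: the identity component of both triple products is $1+(Y_i-Y_{i+1})(Y_{i+1}-Y_{i+2})$, not $1$. This is harmless there, since the omitted term is common to both sides.

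What the paper's route buys is that it uses only the defining relations, so it holds in any algebra satisfying them. Your argument genuinely needs linear independence of the monomial basis, both to read off the coefficients $g_w$ and to cancel $g_w\neq 0$ in the polynomial subalgebra. The paper asserts that basis where it introduces it, so you may use it. Note, though, that the rewriting argument given there proves only spanning.

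Two small tidy-ups:
\begin{itemize}
\item Choose the extremal $w$ of maximal length, since your stated control on correction terms is by length, not Bruhat order.
\item Once $g_s=0$, no ``re-running'' is needed. A nonzero $\Delta$ would then have a maximal support element different from $s$, which you have already excluded.
\end{itemize}
The faithfulness fallback is unnecessary and would itself require proof.
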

These equations show that we can think of the $\varphi_i$ as twisted versions of the $\sigma_i$ (they satisfy the same braid relation (\ref{braid_rel_int}) as the $\sigma_i$, as well as a similar quadratic relation) that commute more simply with the $Y_j$.

\begin{proof}
Most of these are largely exercises in using the Hecke relations to move the $Y_i$ to the right. For the first statement, we have
\begin{eqnarray*}
\sigma_i (Y_i - Y_{i+1}) \cdot \sigma_i (Y_i - Y_{i+1}) &=& \sigma_i^2 (Y_{i+1} - Y_i -2)(Y_i - Y_{i+1}) \\
&=& -(Y_i - Y_{i+1})^2 - 2\sigma_i (Y_i - Y_{i+1}),
\end{eqnarray*}
so
\begin{eqnarray*}
\varphi_i^2 &=& (\sigma_i (Y_i-Y_{i+1}) + 1)^2 \\
&= & -(Y_i - Y_{i+1})^2 - 2\sigma_i (Y_i - Y_{i+1}) + 2 \sigma_i(Y_i - Y_{i+1}) + 1 \\
&=& 1 - (Y_i - Y_{i+1})^2.
\end{eqnarray*}
For the first part of the second statement we have
\begin{eqnarray*}
Y_{i+1} \varphi_i &=& Y_{i+1} (\sigma_i (Y_i-Y_{i+1}) + 1) \\
&=& (\sigma_i Y_i + 1)(Y_i - Y_{i+1}) + Y_{i+1} \\
&=& \sigma_i Y_i (Y_i-Y_{i+1}) + (Y_i - Y_{i+1}) + Y_{i+1} \\
&=& \sigma_i (Y_i - Y_{i+1})Y_i + Y_i = \sigma_i Y_i.
\end{eqnarray*}
The second part is analogous. The third statement is evident since $Y_j$ commutes with $\sigma_i$ if $j \neq i, i+1$. The fourth follows from the second and the third, as moving each $Y_i$ across $\varphi_i$ turns it into $Y_{i+1}$ and vice versa. The fifth follows from the third (and from the corresponding relation for the $\sigma_i$, i.e., $\sigma_i \sigma j = \sigma_j \sigma_i$ if $|i-j| > 1$). The sixth statement is bit more involved, but still a computation, using the fourth statement to move all the $Y_i$ terms to the right again to obtain the expansion
\begin{eqnarray*}
\varphi_i \varphi_{i+1} \varphi_i &=& \sigma_i \sigma_{i+1} \sigma_i (Y_i - Y_{i+1})(Y_i - Y_{i+2})(Y_{i+1} - Y_{i+2}) + \\
&& + \sigma_i \sigma_{i+1} (Y_i - Y_{i+2})(Y_{i+1} - Y_{i+2}) + \\
&& + \sigma_{i+1} \sigma_i (Y_i - Y_{i+1})(Y_i - Y_{i+2}) + \\ && + (\sigma_i + \sigma_{i+1})(Y_i - Y_{i+2}) + 1.
\end{eqnarray*}
The expansion of $\varphi_{i+1} \varphi_i \varphi_{i+1}$ is the same, only with $\sigma_i \sigma_{i+1} \sigma_i$ replaced by $\sigma_{i+1} \sigma_i \sigma_{i+1}$ in the first term. Then the braid relation for the $\varphi_i$ follows from the braid relation for the $\sigma_i$.
\end{proof}

\begin{prop} \label{prop_switching}
Let $v$ be a weight vector in a representation of $H_n$, and let $w_1, \ldots , w_n$ be its weights. Then $\varphi_i \cdot v$ is either $0$ or it is also a weight vector, with $w_i$ and $w_{i+1}$ exchanged and all other weights the same:
\begin{eqnarray*}
Y_i \cdot \varphi_i v &=& w_{i+1} \cdot \varphi_i v, \\
Y_{i+1} \cdot \varphi_i v &=& w_i \cdot \varphi_i v.
\end{eqnarray*}
\end{prop}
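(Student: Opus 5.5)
This follows from the exchange relations in Lemma \ref{lemma_intertwiners}, so the plan is a direct computation. By hypothesis $Y_j v = w_j v$ for all $j$. The weights of $\varphi_i v$ are read off by moving each $Y_j$ past $\varphi_i$ to the right, where it meets $v$ and becomes a scalar.

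\textbf{Step 1: the index $i$.} Part (\ref{eigenexchange}) of Lemma \ref{lemma_intertwiners} gives $Y_i \varphi_i = \varphi_i Y_{i+1}$. Hence
\[
Y_i \cdot \varphi_i v = \varphi_i Y_{i+1} v = w_{i+1}\, \varphi_i v .
\]

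\textbf{Step 2: the index $i+1$.} The other half of part (\ref{eigenexchange}), $\varphi_i Y_i = Y_{i+1}\varphi_i$, gives
\[
Y_{i+1}\cdot \varphi_i v = \varphi_i Y_i v = w_i\, \varphi_i v .
\]

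\textbf{Step 3: all other indices.} For $j \neq i, i+1$, we need $Y_j \varphi_i = \varphi_i Y_j$. This is part (\ref{eigenexchange2}) when $|i-j|>1$, and part (\ref{eigenexchange3}) (with $f = Y_j$, which $\sigma_i$ fixes) covers every $j \notin \{i,i+1\}$ uniformly. It can also be checked directly from (\ref{hecke_rel2}), since $Y_j$ commutes with $\sigma_i$, $Y_i$ and $Y_{i+1}$. So $Y_j \varphi_i v = w_j \varphi_i v$.

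\textbf{Conclusion and main point.} Together, these show that $\varphi_i v$ satisfies the eigenvalue equations with $w_i$ and $w_{i+1}$ exchanged and all other weights unchanged. A weight vector must be nonzero, so the only possible failure is $\varphi_i v = 0$, which accounts for the dichotomy in the statement. The one point needing care is which half of part (\ref{eigenexchange}) handles which index. Optionally, part (\ref{phi_square}) shows $\varphi_i^2 v = (1-(w_i-w_{i+1})^2)v$, so $\varphi_i v \neq 0$ whenever $w_i - w_{i+1} \neq \pm 1$; this is not needed for the statement.
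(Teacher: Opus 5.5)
Your proof is correct and is the same direct computation the paper gives: move each $Y_j$ across $\varphi_i$ using the exchange relations of Lemma \ref{lemma_intertwiners}, then evaluate on $v$. Your treatment of the indices $j \notin \{i,i+1\}$ is slightly more careful than the paper's, which cites only the case $|i-j|>1$ and so technically omits $j=i-1$; your direct appeal to relation (\ref{hecke_rel2}) covers that case.
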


\begin{proof}
Using parts (\ref{eigenexchange}) and (\ref{eigenexchange2}) of Lemma \ref{lemma_intertwiners}, we have
\begin{eqnarray*}
Y_i \cdot \varphi_i v &=& \varphi_i Y_{i+1} v = w_{i+1} \cdot \varphi_i v, \\
Y_{i+1} \cdot \varphi_i v &=& \varphi_i Y_i v = w_i \cdot \varphi_i v, \\
Y_j \cdot \varphi_i v &=& \varphi_i Y_j v = w_j \cdot \varphi_i v \hspace{1cm} \mbox{\rm if $|i-j| > 1$},
\end{eqnarray*}
and this is the assertion of the Proposition.
\end{proof}

Proposition \ref{prop_switching} suggests that we can generate weight vectors in $V(a_1, \ldots , a_n)$ by starting from the vacuum vector $v$ and acting by the $\varphi_i$. Given an arbitrary permutation $\sigma \in S_n$, write $\sigma = \sigma_{i_1} \cdots \sigma_{i_k}$ as a minimal-length product of adjacent transpositions, and let $\varphi_{\sigma} = \varphi_{i_1} \cdots \varphi_{i_k}$.

\begin{lemma}
$\varphi_{\sigma}$ is well-defined, i.e., it is independent of the chosen minimal-length product of adjacent transpositions.
\end{lemma}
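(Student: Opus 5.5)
The plan is to invoke Matsumoto's theorem (the word property for Coxeter groups): any two reduced expressions of the same element of $S_n$ are connected by a finite sequence of braid moves, namely commutations $\sigma_i\sigma_j \leftrightarrow \sigma_j\sigma_i$ for $|i-j|>1$ and $\sigma_i\sigma_{i+1}\sigma_i \leftrightarrow \sigma_{i+1}\sigma_i\sigma_{i+1}$. Crucially, no move of the form $\sigma_i^2 = 1$ is needed. Each such move changes a reduced word $\sigma_{i_1}\cdots\sigma_{i_k}$ only in a contiguous segment. By Lemma \ref{lemma_intertwiners}, the corresponding segments of $\varphi$'s are equal in $H_n$: the fifth statement gives $\varphi_i\varphi_j = \varphi_j\varphi_i$, and the sixth gives the braid relation for the $\varphi_i$. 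Substituting equal elements into a product does not change the product, so $\varphi_\sigma$ is unchanged by each move, and hence is independent of the chosen reduced word.

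If I want to keep the paper self-contained rather than cite Matsumoto, I would prove the word property by induction on length. Take two reduced words $\sigma_{i}u$ and $\sigma_{j}u'$ for $\sigma$.

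\begin{itemize}
\item If $i=j$, cancel the common first letter and apply induction to $u$ and $u'$.
\item If $i\neq j$, use the exchange property: since $\sigma$ has left descents at both $i$ and $j$, it has a reduced word starting with the longest element of the parabolic subgroup $\langle \sigma_i,\sigma_j\rangle$. That longest element is $\sigma_i\sigma_j$ if $|i-j|>1$, and $\sigma_i\sigma_j\sigma_i$ if $|i-j|=1$.
\item Call this reduced word $x\,r$. By induction, $\sigma_i u$ is connected to $x\,r$ by braid moves, and so is $\sigma_j u'$, in each case after the first letter is matched. The word $x$ beginning with $\sigma_i$ and the word beginning with $\sigma_j$ are related by a single braid move.
\end{itemize}

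In permutation terms, the descent facts are elementary. A left descent at $i$ means $i+1$ appears before $i$ in the word of $\sigma^{-1}$, or equivalently a statement about inversions. Length equals the number of inversions, and multiplying by $\sigma_i$ changes the inversion count by exactly $\pm1$.

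The main obstacle is this combinatorial input, the word property, not any Hecke computation; the algebraic content is already in Lemma \ref{lemma_intertwiners}. The point to watch is that the $\varphi_i$ do not satisfy $\varphi_i^2=1$: by the first statement of that lemma, $\varphi_i^2 = 1-(Y_i-Y_{i+1})^2$. So it really matters that reduced words are related without ever inserting or cancelling a square. This is exactly why the definition of $\varphi_\sigma$ restricts to minimal-length products.
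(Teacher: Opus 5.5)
Your proposal is correct and follows the paper's route: the paper also asserts that any two reduced expressions are connected using only the relations (\ref{transp_commute}) and (\ref{braid_rel}), and then transfers each such move to the $\varphi_i$ via the fifth and sixth statements of Lemma \ref{lemma_intertwiners}. The paper takes this word property (Matsumoto's theorem) for granted. Your explicit remark that no $\sigma_i^2=1$ move is ever needed, which matters because $\varphi_i^2 = 1-(Y_i-Y_{i+1})^2$, and your inductive sketch of the word property are therefore useful elaborations of the same argument rather than a different one.
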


\begin{proof}
Suppose that $\sigma = \sigma_{i_1} \cdots \sigma_{i_k} = \sigma_{j_1} \cdots \sigma_{j_k}$ are two minimal-length expansions. It follows that we can turn one expansion into the other by applying the relations (\ref{transp_commute}) and (\ref{braid_rel}). But Lemma \ref{lemma_intertwiners} asserts that the $\varphi_i$ obey the same relations, and thus $\varphi_{\sigma} = \varphi_{i_1} \cdots \varphi_{i_k}$ can be turned into $\varphi_{\sigma} = \varphi_{j_1} \cdots \varphi_{j_k}$.
\end{proof}

\begin{prop} \label{Hn_weight_basis}
Write $v_{\sigma} = \varphi_{\sigma} \cdot v$. The vectors $\{ v_{\sigma} \}_{\sigma \in S_n}$ are independent non-zero weight vectors, and hence form the weight basis of $V(a_1, \ldots , a_n)$.
\end{prop}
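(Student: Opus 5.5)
The plan is to show three things about $v_{\sigma}$: it is a weight vector with an explicitly permuted weight, it is nonzero, and the weights of different $v_{\sigma}$ are distinct. Independence then follows from the standard fact that eigenvectors of a commuting family with distinct joint eigenvalues are linearly independent. Since there are $n! = \dim V(a_1,\ldots,a_n)$ of them, they form a basis.

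\emph{Weights.} The vacuum $v$ has weights $(a_1,\ldots,a_n)$. I will apply Proposition \ref{prop_switching} repeatedly along a reduced word $\sigma = \sigma_{i_1}\cdots\sigma_{i_k}$, acting by $\varphi_{i_k}$ first. By induction, $v_{\sigma}$ is either $0$ or a weight vector whose weight sequence is $(a_1,\ldots,a_n)$ permuted by $\sigma$; concretely $Y_j v_{\sigma} = a_{\sigma^{-1}(j)} v_{\sigma}$, up to fixing the convention. Because the $a_i$ are distinct, distinct permutations give distinct weight sequences. So once nonvanishing is known, independence is immediate.

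\emph{Nonvanishing: the main step.} A tempting route is to use $\varphi_i^2 = 1-(Y_i-Y_{i+1})^2$, but that would need the extra hypothesis $a_i - a_j \neq \pm 1$. Instead I will use a triangularity argument in the monomial basis, which only needs the $a_i$ distinct. The claim, proved by induction on $\ell(\sigma)$, is
\[
\varphi_{\sigma} = \sigma \cdot f_{\sigma}(Y_1,\ldots,Y_n) + \sum_{\ell(\tau)<\ell(\sigma)} \tau \cdot g_{\tau}(Y_1,\ldots,Y_n),
\]
where $f_{\sigma}$ is a product of $\ell(\sigma)$ linear factors $Y_p - Y_q$ with $p \neq q$, one for each inversion of $\sigma$. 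For the inductive step, take $\ell(\sigma\sigma_i) = \ell(\sigma)+1$ and multiply on the right by $\varphi_i = \sigma_i(Y_i - Y_{i+1}) + 1$. To move a polynomial $h(Y)$ past $\sigma_i$, I use (\ref{hecke_rel2}) and (\ref{hecke_rel3}), which give $h(Y)\sigma_i = \sigma_i (\sigma_i\cdot h)(Y) + (\text{polynomial in } Y)$. Applied to the leading term, this produces $\sigma\sigma_i\,(\sigma_i\cdot f_{\sigma})(Y)(Y_i - Y_{i+1})$ plus terms $\sigma\cdot(\text{poly})$ of length $\ell(\sigma) < \ell(\sigma\sigma_i)$. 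Applied to the lower terms, it produces terms $\tau\sigma_i\cdot(\text{poly})$ and $\tau\cdot(\text{poly})$, all of length at most $\ell(\sigma)$. The new leading polynomial is again a product of differences $Y_p - Y_q$ with $p\ne q$, since permuting variables preserves that form.

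\emph{Conclusion.} Now apply the expansion to $v$. Each polynomial evaluates to a scalar, so
\[
v_{\sigma} = f_{\sigma}(a)\,\sigma v + \sum_{\ell(\tau)<\ell(\sigma)} g_{\tau}(a)\,\tau v .
\]
The vectors $\{\tau v\}_{\tau\in S_n}$ are a basis of $V$ (the monomial basis acting on $v$, i.e.\ $V\cong\mathbb{C}[S_n]$). The coefficient $f_{\sigma}(a)$ is a product of differences $a_p - a_q$ with $p\neq q$, hence nonzero. So $v_{\sigma}\neq 0$. This triangularity also gives independence directly, without needing the weights at all.

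The main obstacle is the bookkeeping in the inductive step. One must check that commuting $Y$'s past $\sigma_i$ produces only correction terms of strictly lower length. One must also check that the leading polynomial stays a product of differences of distinct variables, which is what guarantees it does not vanish at distinct $a_i$.
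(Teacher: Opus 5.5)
Your proof is correct and is essentially the paper's argument: $v_{\sigma}\neq 0$ because its expansion in the basis $\{\tau v\}$ is triangular, with leading term $\sigma v$ whose coefficient is a product of differences of distinct $a_i$, and independence follows from the distinct weights (or, as you note, from the triangularity itself). The only difference is bookkeeping. The paper prepends $\varphi_{i_1}$ on the left, lets $Y_{i_1}-Y_{i_1+1}$ act by a nonzero scalar on the weight vector $v_{\sigma'}$, and orders terms by Bruhat order. You append $\varphi_i$ on the right inside $H_n$, which costs you the commutation rule $h(Y)\sigma_i = \sigma_i(\sigma_i\cdot h)(Y) + (\text{polynomial})$, but in return gives an expansion that does not depend on the module and is ordered simply by length.
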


\begin{proof}
We know that $\{ \sigma \cdot v \}_{\sigma \in S_n}$ is a basis for $V(a_1, \ldots , a_n)$ (act on $v$ by  the monomial basis of $H_n$). Given a minimal expansion $\sigma = \sigma_{i_1} \cdots \sigma_{i_k}$, write $\sigma' = \sigma_{i_2} \cdots \sigma_{i_k}$, so that $\sigma = \sigma_{i_1} \cdot \sigma'$ and correspondingly $\varphi_{\sigma} = \varphi_{i_1} \cdot \varphi_{\sigma'}$. Assume by induction (on the length of the expansion of $\sigma$) that $v_{\sigma'} = \varphi_{\sigma'} \cdot v$ expanded in terms of the $\{ \sigma \cdot v \}$ basis is a linear combination of $\sigma' \cdot v$, with a non-zero coefficient, and terms of the form $\sigma'' \cdot v$, where each $\sigma''$ is below $\sigma'$ in the Bruhat order (i.e., each $\sigma''$ has the form $\sigma_{j_1} \cdots \sigma_{j_{\kappa}}$, where $j_1 \cdots j_{\kappa}$ is a subword of $i_2 \cdots i_k$).

Now consider how $\varphi_{i_1} = \sigma_{i_1} (Y_{i_1} - Y_{i_1+1}) + 1$ acts on $v_{\sigma'}$. Since each $\varphi_i$ takes each weight vector to another weight vector (or kills it), $v_{\sigma'}$ is also a weight vector, and it is non-zero by the inductive hypothesis because the leading term (in the Bruhat order) of its expansion in terms of the $\sigma \cdot v$ basis is. The weight of $v_{\sigma'}$ is $\sigma' \cdot (a_1, \ldots , a_n) = (a_{\sigma'^{-1}(1)}, \ldots , a_{\sigma'^{-1}(n)})$, seen by applying Proposition \ref{prop_switching} to the action of each $\varphi_{i_k}$ in the expansion of $\varphi_{\sigma'}$. Thus, $Y_{i_1} - Y_{i_1+1}$ acts on $v_{\sigma'}$ by a scalar $a_i - a_j$, which is non-zero as we assume the $a_i$ are all distinct. $\sigma_{i_1}$ acting on the Bruhat leading term $\sigma' \cdot v$ of $v_{\sigma'}$ results in $\sigma \cdot v$ as the Bruhat leading term of $v_{\sigma}$, while $\sigma_{i_1}$ acting on lower terms as well as $1$ acting on all of $v_{\sigma'}$ give us only lower terms. Summarizing, the Bruhat leading term of $v_{\sigma}$ is $\sigma \cdot v$, and this has a non-zero coefficient, verifying the inductive step and proving that $v_{\sigma} \neq 0$ for all $\sigma$.

Applying Proposition \ref{prop_switching} again, $v_{\sigma}$ has weight $\sigma \cdot (a_1, \ldots , a_n) = (a_{\sigma^{-1}(1)}, \ldots , a_{\sigma^{-1}(n)})$. These are distinct for all $\sigma$ since the $a_i$ are distinct, so the $v_{\sigma}$ are weight vectors with distinct weights, hence distinct and independent. There are $n!$ of them and $V(a_1, \ldots , a_n)$ is $n!$-dimensional, so they make up the full weight basis.
\end{proof}

Now suppose that $a_i - a_j \neq \pm 1$ for all $i$ and $j$. By part \ref{phi_square} of Lemma \ref{lemma_intertwiners}, $\varphi_i^2$ acts on each weight vector $v_{\sigma}$ by
\begin{eqnarray*}
\varphi_i^2 \cdot v_{\sigma} &=& (1 - (Y_i - Y_{i+1})^2 ) \cdot v_{\sigma} \\
&=& (1 - (a_{\sigma^{-1}(i)} - a_{\sigma^{-1}(i+1)})^2) \cdot v_{\sigma}.
\end{eqnarray*}
Since $a_i - a_j \neq \pm 1$ always, this is a non-zero multiple of $v_{\sigma}$, i.e., $\varphi_i^2$ acts diagonally on the weight basis, with non-zero eigenvalues. This implies that $\varphi_i$ is invertible for all $i$. Now it is easy to show that

\begin{prop} \label{generic_modules} (Generic modules)
Let $a_i - a_j \neq \pm 1$ for all $i, j$. Then $V(a_1, \ldots , a_n)$ is irreducible as an $H_n$-module.
\end{prop}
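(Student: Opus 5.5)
Let $W \subseteq V = V(a_1, \ldots , a_n)$ be a non-zero $H_n$-submodule. The plan is to show $W = V$ in three steps: first that $W$ contains some weight vector $v_\sigma$, then that it contains the vacuum $v$, and finally that $v$ generates $V$.

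\emph{Step 1: $W$ contains a weight vector.} By Proposition \ref{Hn_weight_basis}, the $v_\sigma$ form a basis of $V$ with pairwise distinct weights $\sigma\cdot(a_1,\ldots,a_n)$, since the $a_i$ are distinct. So $\mathbb{C}[Y_1,\ldots,Y_n]$ acts on $V$ semisimply with one-dimensional weight spaces. Take any non-zero $w \in W$ and write $w = \sum_\sigma c_\sigma v_\sigma$. Pick $\tau$ with $c_\tau \neq 0$. Interpolation gives a polynomial $f(Y_1,\ldots,Y_n)$ that takes the value $1$ on the weight of $v_\tau$ and $0$ on all the other $n!-1$ weights; for instance, a product of linear factors separating the weight of $\tau$ from each other weight, suitably normalized. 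Then $f\cdot w = c_\tau v_\tau \in W$, so $v_\tau \in W$.

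\emph{Step 2: $W$ contains the vacuum.} Write $\tau^{-1} = \sigma_{j_1}\cdots\sigma_{j_m}$ as a minimal-length product and consider $\varphi_{j_1}\cdots\varphi_{j_m}\cdot v_\tau$. The hypothesis $a_i - a_j \neq \pm 1$ is what makes each $\varphi_i$ invertible on $V$, as noted just before the statement, since $\varphi_i^2$ acts diagonally with non-zero eigenvalues. Hence every $\varphi_i$ maps non-zero vectors to non-zero vectors.

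By Proposition \ref{prop_switching}, each application of a $\varphi_i$ to a weight vector produces a non-zero weight vector whose weight is obtained by the corresponding swap. After applying all of $\varphi_{j_1},\ldots,\varphi_{j_m}$ we therefore reach a non-zero vector of weight $(a_1,\ldots,a_n)$. That weight space is one-dimensional and spanned by $v = v_{\mathrm{id}}$, so $v \in W$. Alternatively, one can note that $\varphi_{\tau^{-1}}$ acts invertibly and lands in the line $\mathbb{C}v$.

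\emph{Step 3: $v$ generates $V$.} By construction $V = H_n \otimes_{\mathbb{C}[Y]} \mathbb{C}v$, and $\{\sigma\cdot v\}$ is a basis. Since $v \in W$, we get $W \supseteq H_n v = V$.

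The only real obstacle is Step 2, where we must ensure that passing from $v_\tau$ back to $v$ never hits zero. This is exactly where genericity $a_i - a_j \neq \pm 1$ is used, via $\varphi_i^2 = (1+Y_i-Y_{i+1})(1-(Y_i-Y_{i+1}))$ from Lemma \ref{lemma_intertwiners}. Steps 1 and 3 rely only on distinctness of the $a_i$ and on the structure of the induced module.
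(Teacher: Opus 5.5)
Your proof is correct and follows essentially the same route as the paper: find a weight vector $v_\tau$ in $W$, then use the invertibility of the $\varphi_i$ (which is where $a_i - a_j \neq \pm 1$ enters) to move between weight vectors. The differences are cosmetic. You make explicit the interpolation step producing $v_\tau$, which the paper compresses into ``the $Y_i$ have a common eigenvector in $W$.'' You also walk back to the vacuum and use cyclicity of the induced module, whereas the paper applies $\varphi_\sigma$ to $v_{\sigma'}$ to reach every $v_{\sigma\sigma'}$ directly.
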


\begin{proof}
Let $W \subseteq V(a_1, \ldots , a_n)$ be a non-zero $H_n$-submodule. Because the $Y_i$ commute and preserve $W$, they must have a common eigenvector inside $W$, i.e., $W$ contains a weight vector $v_{\sigma'}$ for some $\sigma' \in S_n$. Because the $\varphi_i$ are invertible, each $\varphi_{\sigma}$ must be, and it follows from Proposition \ref{prop_switching} that $\varphi_{\sigma} \cdot v_{\sigma'} = v_{\sigma \sigma'}$. Thus $v_{\sigma'}$ is a cyclic vector, and hence $W$ must be all of $V(a_1, \ldots , a_n)$.
\end{proof}

\section{Symmetric Group Weights and RSK} \label{sec_Sn_rep_theory}
\subsection{Weight Basis for $\mathbb{C}[S_n]$} \label{Sn_weight_basis}
As mentioned in the Introduction, irreducible representations of the symmetric group $S_n$ correspond to partitions of $n$, or, equivalently, (straight) Young diagrams with $n$ boxes, and the dimension of the irreducible representation $V_{\lambda}$ is the number of standard tableaux of shape $\lambda$. $V_{\lambda}$ can be described explicitly in terms of the action of the JM elements $X_i$. As stated in Section \ref{sec_hecke_trans}, the $X_i$ satisfy the Hecke relations, and pulling back $\psi: H_n \to S_n$, $\psi: Y_i \mapsto X_i$, makes $V_{\lambda}$ into an $H_n$-module. Thus, we can look for weight vectors, which in this context are common eigenvectors of the $X_i$ \cite{bib_OV}, and we can define elements $\varphi_i = \sigma_i (X_i - X_{i+1}) + 1$ (writing $X_i$ in place of $Y_i$ because the $Y_i$ act as $X_i$ in $V_{\lambda}$), which interchange these weight vectors and the corresponding weights. The basic results are:

\begin{prop} \label{prop_Sn_reps}
Let $V_{\lambda}$ be the irreducible representation of $S_n$ associated to a Young diagram $\lambda$ with $n$ boxes. Then:
\begin{enumerate}
\item (Weight basis) \label{Sn_weight_decomp}
$V_{\lambda}$ has a weight basis $\{v_T\}$ labeled by standard tableaux $T$ of shape $\lambda$, where each $v_T$ is a common eigenvector of the JM elements $X_i$, i.e., 
\[ 
V_{\lambda} = \bigoplus_{T} \mathbb{C} v_T.
\]
\item (Weight formula) The action of $X_i$ on $v_T$ is given by
\[
X_i \cdot v_T = \cont(b(T, i)) \cdot v_T,
\]
where $b(T,i)$ is the box in $T$ containing $i$, and $\cont(b)$ represents the content of box $b$ (defined as column index $-$ row index, see Section \ref{sec_bump_slide}).
\item (Intertwiners) \label{Sn_intertwiners}
Each $\varphi_i$ exchanges $v_T$ with $v_{T'}$, where $T'$ is obtained from $T$ by exchanging $i$ and $i+1$, if this results in a standard tableau. Otherwise (i.e., if $i$ and $i+1$ are adjacent to each other in $T$; compare with Section \ref{sec_slide_switch}), $\varphi_i \cdot v_T = 0$.
\item (Local action principle) Each adjacent transposition $\sigma_i$ acts on $v_T$ by
\[
\sigma_i \cdot v_T = a v_T + b v_{T'},
\]
where $T'$ comes from interchanging $i$ and $i+1$ in $T$ as above if this keeps them both standard. Otherwise, $b = 0$, and $a = \pm 1$, depending on whether $i$ and $i+1$ are in the same row (trivial representation of $S_2$) or column (sign representation).
\item (Branching rule) Let $b_1, \ldots , b_k$ be the outer boxes of $\lambda$. Grouping the tableaux on the right side of (\ref{Sn_weight_decomp}) by the outer box in which $n$ is written, we have
\[
\Res_{S_{n-1}}^{S_n} V_{\lambda} = \bigoplus_{b_i} \bigoplus_{\substack{\mbox{\rm \scriptsize $T$ has $n$} \\ \mbox{\rm \scriptsize in box $b_i$} }} \mathbb{C} v_T = \bigoplus_{\lambda' \subset \lambda} V_{\lambda'},
\]
where the last sum, which is multiplicity-free, is over Young diagrams $\lambda'$ obtained from $\lambda$ by removing a single outer box. (Here we view $S_{n-1} \subset S_n$ as the subgroup fixing $n$, as usual.)
\end{enumerate}
\end{prop}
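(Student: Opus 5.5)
We will prove Proposition \ref{prop_Sn_reps} in the style of Okounkov-Vershik \cite{bib_OV}, by induction on $n$. The branching rule and the weight basis will be established together, and the intertwiner statements will then be read off from Lemma \ref{lemma_intertwiners} and Proposition \ref{prop_switching}. Concretely, we would rather \emph{construct} $V_\lambda$ from its weight data and then identify it with the irreducible, than assume a prior construction. For each shape $\lambda$, let $V_\lambda$ be the vector space with basis $\{v_T\}$ indexed by standard tableaux of shape $\lambda$.

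On this space let $X_i$ act diagonally by $\cont(b(T,i))$. Let $\sigma_i$ act by
\[
\sigma_i v_T = \frac{1}{c_{i+1}-c_i}\, v_T + \Bigl(1 - \frac{1}{(c_{i+1}-c_i)^2}\Bigr)^{1/2}\, v_{T'} ,
\]
where $c_j=\cont(b(T,j))$ and $T'=\sigma_i T$. The off-diagonal term is omitted when $T'$ is not standard.

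Observe that if $i,i+1$ are adjacent then $c_{i+1}-c_i=\pm1$. The diagonal coefficient is then $+1$ for the same row and $-1$ for the same column. This gives the local action principle, with $a=1/(c_{i+1}-c_i)$ in general.

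The first step is to check that this defines an $S_n$-action satisfying the Hecke relations (\ref{hecke_rel1})-(\ref{hecke_rel2}) with $Y_i=X_i$.
\begin{itemize}
\item $\sigma_i^2=1$ is a $2\times2$ computation on each block $\{v_T,v_{T'}\}$.
\item Commutation of far-apart generators is clear, since they touch disjoint entries.
\item (\ref{hecke_rel3}) is verified blockwise as well: $\sigma_iX_{i+1}-X_i\sigma_i$ has diagonal entry $(c_{i+1}-c_i)\cdot\frac{1}{c_{i+1}-c_i}=1$ and vanishing off-diagonal entry.
\item The braid relation involves only $i,i+1,i+2$. It reduces to a finite check on skew tableaux of three boxes, with at most six basis vectors, in terms of the three contents.
\end{itemize}
We expect the braid relation to be the main obstacle. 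We would handle it by noting that both sides are $H_3$-module maps determined by the weights: use the $\varphi$'s to reduce to showing that both sides send a weight vector to a vector with the same weight and the same leading coefficient.

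Next, we will derive the intertwiner statement (\ref{Sn_intertwiners}) directly. With $\varphi_i=\sigma_i(X_i-X_{i+1})+1$, we get
\[
\varphi_i v_T = (c_i-c_{i+1})\sigma_i v_T + v_T = (c_i-c_{i+1})\,\beta\, v_{T'} ,
\]
where $\beta$ denotes the off-diagonal coefficient of $\sigma_i v_T$. The diagonal parts cancel exactly, and $\varphi_i v_T=0$ when $T'$ is nonstandard. Rescaling the $v_T$ consistently, via a path of switches from a fixed tableau, turns this into an exact exchange. Proposition \ref{prop_switching} confirms that $v_{T'}$ has the swapped weights.

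Then comes irreducibility. Distinct standard tableaux have distinct content vectors, because a tableau is recovered from its contents by building up along Young's lattice. Hence any submodule is a sum of weight lines. Any two standard tableaux of shape $\lambda$ are connected by admissible switches $T\mapsto\sigma_iT$ (for example via a fixed row-reading tableau), and the corresponding $\varphi_i$ are nonzero on them. So a nonzero submodule is everything.

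Restriction to $S_{n-1}$ fixes the box containing $n$, and on that piece the formulas are exactly those of $V_{\lambda'}$. This gives the branching rule. Multiplicity-freeness follows because different $\lambda'$ have different $X_1,\dots,X_{n-1}$ spectra.

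Finally, distinct $\lambda$ give non-isomorphic modules, since their spectra of $X_1,\dots,X_n$ differ. Moreover $\sum_\lambda f_\lambda^2=n!$ by the bijection of Lemma \ref{rsk_properties}. Hence the $V_\lambda$ exhaust the irreducibles of $S_n$, which identifies them with the standard irreducible representations and completes the proof.
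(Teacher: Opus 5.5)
Your argument is correct in outline, but it takes a genuinely different route from the paper's.

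The paper gives no proof of its own. It defers to Okounkov--Vershik, whose logic runs the other way:
\begin{itemize}
\item start from the abstract irreducibles;
\item show that restriction from $S_n$ to $S_{n-1}$ is multiplicity-free, via commutativity of the centralizer of $\mathbb{C}[S_{n-1}]$, which yields the Gelfand--Tsetlin weight basis;
\item use only the $H_2$-type relations among $\sigma_i, X_i, X_{i+1}$ to show that the joint spectrum of the JM elements is exactly the set of content vectors of standard tableaux.
\end{itemize}
In that approach tableaux, the intertwiner action and the branching rule \emph{emerge}. This is the ``branching is fundamental'' viewpoint the paper stresses right after the statement.

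You instead write down Young's orthogonal form, verify the relations, and identify the modules by irreducibility, separation by JM spectra, and the count $\sum_\lambda f_\lambda^2=n!$. This is more elementary and gives explicit matrices, with no centralizer argument or spectral analysis. The costs are that it inserts the tableaux by hand, needs a braid-relation check, and borrows the dimension count from the RSK bijection of Lemma~\ref{rsk_properties}. That is not circular, but counting partitions against conjugacy classes would keep RSK an output rather than an input, which fits the paper better.

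Three points need repair.

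(i) Your justification of the braid relation is off: $\sigma_i\sigma_{i+1}\sigma_i$ and $\sigma_{i+1}\sigma_i\sigma_{i+1}$ are not $H_3$-module maps. Here is an argument that works.
\begin{itemize}
\item The expansion in the proof of part (6) of Lemma~\ref{lemma_intertwiners} uses only $\sigma_i^2=1$, commuting $Y$'s and the Hecke relations, all of which you have checked. So on your space $\varphi_i\varphi_{i+1}\varphi_i-\varphi_{i+1}\varphi_i\varphi_{i+1}=(\sigma_i\sigma_{i+1}\sigma_i-\sigma_{i+1}\sigma_i\sigma_{i+1})(X_i-X_{i+1})(X_i-X_{i+2})(X_{i+1}-X_{i+2})$.
\item The last factor is invertible, because $i,i+1,i+2$ sit in boxes of pairwise distinct content. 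For instance, if $i$ and $i+2$ shared a diagonal, the boxes immediately right of and below $i$ would both need entries strictly between $i$ and $i+2$.
\item The $\varphi$-braid relation itself is immediate from your formula $\varphi_i v_T=f(c_{i+1}-c_i)\,v_{\sigma_iT}$ with $f(r)=-\mathrm{sgn}(r)\sqrt{r^2-1}$. This $f$ vanishes exactly in the non-standard case. Both sides then send $v_T$ to $f(c_{i+1}-c_i)\,f(c_{i+2}-c_i)\,f(c_{i+2}-c_{i+1})$ times the vector of the tableau with $i$ and $i+2$ swapped.
\end{itemize}

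(ii) You define the $X_i$ as diagonal operators. But part (2), your irreducibility argument (``any submodule is a sum of weight lines'') and your non-isomorphism argument all concern the JM elements $(1\ i)+\cdots+(i-1\ i)$ computed from your $\sigma$-action. Say explicitly that $X_1$ acts by the content $0$ of box $(1,1)$. Then part (4) of the lemma on Hecke properties of JM elements identifies your diagonal operators with the JM elements.

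(iii) Drop the claim that rescaling makes $\varphi_i$ an exact exchange. Since $\varphi_i^2$ acts on $v_T$ by $1-(c_i-c_{i+1})^2\neq 1$, no normalization achieves this. Part (3) is meant up to nonzero scalars, which is what you already have.
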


While this formulation makes the branching rule look like a simple consequence of the weight decomposition and fact that the local action principle keeps the subset of tableaux that have $n$ in a particular outer box invariant under the action of $S_{n-1}$, the main idea of \cite{bib_OV} is that it is really the branching rule that is the fundamental fact, and the rest of the representation theory of $S_n$ follows from it by the action of the internal Hecke translations $X_i$ and the Hecke relations. Note also that the tableau $T$ simultaneously describes the weights of the action of the $X_i$ and the branching path of the weight vector $v_T$, i.e., which irreducible $S_i$-module $v_T$ belongs to at each step of the iterative restriction from $S_n$ to $S_{n-1}$ to $S_{n-2}$ all the way down to $S_1$.

Finally, we can extend the notion of weight basis as follows to the regular representation $\mathbb{C}[S_n]$ as follows \cite{bib_ES}, applying the fundamental idea that the left and right multiplication actions of $S_n$ on $\mathbb{C}[S_n]$ commute:

\begin{prop} \label{prop_weight_basis_CSn}
$\mathbb{C}[S_n]$ has a unique (up to scaling) basis of weight vectors $\{v_{T, T^*}\}$, indexed by pairs $(T, T^*)$ of standard tableaux of the same shape, where each $\{v_{T, T^*}\}$ is a common eigenvector of the $X_i$ acting on the left and on the right, with the action given by
\begin{eqnarray*}
X_i \cdot v_{T, T^*} &=& \cont(b(T, i)) \cdot v_{T, T^*}, \\
v_{T, T^*} \cdot X_i &=& \cont(b(T^*, i)) \cdot v_{T, T^*}.
\end{eqnarray*}
\end{prop}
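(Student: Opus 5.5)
The plan is to combine the decomposition $\mathbb{C}[S_n] = \bigoplus_\lambda \End(V_\lambda) = \bigoplus_\lambda V_\lambda \otimes V_\lambda^*$ from the Introduction with the weight basis of Proposition \ref{prop_Sn_reps}. The left action of $S_n$ on $\mathbb{C}[S_n]$ corresponds to the action on the $V_\lambda$ factor. The right action corresponds to the action on the $V_\lambda^*$ factor, via $(f \cdot g)(v) = f(g v)$, i.e.\ $f \mapsto f \circ g$.

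First, we fix this isomorphism concretely. The map $\mathbb{C}[S_n] \to \bigoplus_\lambda \End(V_\lambda)$, $x \mapsto (\rho_\lambda(x))_\lambda$, is an algebra isomorphism by Wedderburn/Maschke, and the dimension count $n! = \sum f_\lambda^2$ confirms it. Under it, left multiplication by $X_i$ becomes post-composition with $\rho_\lambda(X_i)$, and right multiplication becomes pre-composition.

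Next, we write down the candidate basis. Let $\{v_T\}$ be the weight basis of $V_\lambda$, and let $\{v_T^*\}$ be the dual basis of $V_\lambda^*$. Define $v_{T,T^*}$ as the preimage of the rank-one endomorphism $v_T \otimes v_{T^*}^*$ in the $\lambda$-component, that is, $u \mapsto v_{T^*}^*(u)\, v_T$.
\begin{itemize}
\item Left multiplication by $X_i$ multiplies it by $\cont(b(T,i))$, by the weight formula.
\item Right multiplication gives $u \mapsto v_{T^*}^*(X_i u)\, v_T$. Since $X_i$ acts diagonally in the basis $\{v_S\}$, the functional $v_{T^*}^* \circ X_i$ equals $\cont(b(T^*,i))\, v_{T^*}^*$.
\end{itemize}
This gives both eigenvalue formulas. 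The vectors are independent because they form a basis of each $\End(V_\lambda)$, and their number is $\sum_\lambda f_\lambda^2 = n!$.

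Uniqueness up to scaling is the main point to check. It suffices to show that the joint eigenvalues $\bigl((\cont(b(T,i)))_i, (\cont(b(T^*,i)))_i\bigr)$ are pairwise distinct across all pairs, so that every joint eigenspace is one-dimensional. This reduces to the claim that the content vector $(\cont(b(T,1)),\dots,\cont(b(T,n)))$ determines the standard tableau $T$, including its shape. We prove this by induction along Young's lattice. Suppose $T$ restricted to $1,\dots,k-1$ is known, with shape $\mu$. The box containing $k$ is an outer corner of $\mu$, and distinct outer corners of a Young diagram lie on distinct diagonals, so they have distinct contents. Hence $\cont(b(T,k))$ pins down the box. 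Consequently the $T$-label, the $T^*$-label, and the shape are all recovered from the eigenvalues. Any common eigenvector therefore lies in a one-dimensional joint eigenspace spanned by some $v_{T,T^*}$, so any weight basis coincides with ours up to scaling. I expect this distinct-diagonals step to be the only genuinely non-formal ingredient.
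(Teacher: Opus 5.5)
Your proof is correct and complete, but it takes a different route from the one the paper points to.

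\textbf{What the paper does.} The paper gives no proof of this proposition. It attributes the result to \cite{bib_ES} and motivates it by the commutation of the left and right actions. It then only remarks that the basis can be constructed inductively, by viewing $\mathbb{C}[S_n]$ as induced from $\mathbb{C}[S_{n-1}]$.

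\textbf{What you do instead.} You make the Introduction's decomposition $\mathbb{C}[S_n] = \bigoplus_\lambda \End(V_\lambda)$ concrete via Artin--Wedderburn. You then take $v_{T,T^*}$ to be the preimage of the matrix unit $v_T \otimes v_{T^*}^*$, built from the weight basis of Proposition \ref{prop_Sn_reps}. Since left and right multiplication by $X_i$ become post- and pre-composition, both eigenvalue formulas follow in a line each. Your uniqueness argument (distinct addable boxes have distinct contents, so the content vector determines $T$ and hence its shape) is exactly the Okounkov--Vershik fact the paper later invokes, citing \cite{bib_OV}, in the proof of Proposition \ref{prop_inner_slide}.

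\textbf{Comparison.} Your route buys brevity and an actual proof of the ``unique up to scaling'' clause, which the paper merely asserts. It costs treating semisimplicity and the classification of irreducibles as a black box. It also produces the $v_{T,T^*}$ only abstractly, or through matrix coefficients of $\rho_\lambda$ in the weight basis via Fourier inversion. The inductive construction the paper hints at instead builds the basis for $S_n$ out of the one for $S_{n-1}$ through branching. That fits the Okounkov--Vershik viewpoint the paper stresses, in which branching is the fundamental fact. It also makes the compatibility of the basis with restriction to $S_{n-1}$, on both the left and the right, visible by construction.
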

We can construct this basis explicitly by induction, viewing $\mathbb{C}[S_n]$ (as an $S_n$-module) as induced from $\mathbb{C}[S_{n-1}]$ (as an $S_{n-1}$-module), but we will not need this.

\subsection{Skew Representations} \label{skew_reps}
Now consider a Young diagram $\lambda$ with $m+n$ boxes (more compactly, write $|\lambda|=m+n$), and the corresponding irreducible representation $V_{\lambda}$ of $S_{m+n}$. We can view $V_{\lambda}$ as an $H_{m+n}$-module by having the $Y_i$ act as the $X_i$ as in Section \ref{Sn_weight_basis}.

Let $\mu$ be a subdiagram of $\lambda$ with $m$ boxes ($|\mu| = m$), so that $\lambda / \mu$ is a skew shape with $n$ boxes ($|\lambda / \mu| = n$). Take $S_n \subset S_{m+n}$ to be the subgroup fixing $1, \ldots , m$, i.e., acting on $m+1, \ldots , m+n$ only. $S_n$ is generated by $\sigma_{m+1}, \ldots , \sigma_{m+n-1}$. Similarly, take $H_n \subset H_{m+n}$ to be the subalgebra generated by $\sigma_{m+1}, \ldots , \sigma_{m+n-1}$ and $Y_{m+1}, \ldots , Y_{m+n}$, so that $S_n \subset H_n$.

Fix a tableau $T'$ of shape $\mu$, and consider the subspace $V_{\lambda / \mu} \subset V_{\lambda}$ spanned by all weight vectors $v_T$ associated to tableaux $T$ of shape $\lambda$ that contain $T'$ as a subtableau. These are naturally in bijection with skew tableaux of shape $\lambda / \mu$ (with entries $1, \ldots , n$) by incrementing the entries of the skew tableau by $m$, so they become $m+1, \ldots , m+n$, and filling in $\mu$ with $T'$, which supplies the entries $1, \ldots , m$.

For example, let $\lambda = (3,3,1)$ and $\mu = (2,1)$, so that $m = 3$ and $n=4$. We have eight skew tableaux of shape $\lambda / \mu$: 
\[
\ytableaushort{\none\none 3, \none 24,1}, \ \ytableaushort{\none\none 2, \none 34,1}, \  \ytableaushort{\none\none 3, \none 14,2}, \ \ytableaushort{\none\none 1, \none 34,2}, \ \ytableaushort{\none\none 2, \none 14,3}, \ \ytableaushort{\none\none 1, \none 24,3}, \ \ytableaushort{\none\none 2, \none 13,4}, \ \ytableaushort{\none\none 1, \none 23,4}
\]
Setting $T' = \ytableaushort{12,3}$, we have eight corresponding tableaux of shape $\lambda$ containing $T'$:
\[
\ytableaushort{126, 357,4}, \ \ytableaushort{125, 367,4}, \  \ytableaushort{126, 347,5}, \ \ytableaushort{124, 367,5}, \ \ytableaushort{125, 347,6}, \ \ytableaushort{124, 357,6}, \ \ytableaushort{125, 346,7}, \ \ytableaushort{124, 356,7}
\]

\begin{lemma}
$V_{\lambda / \mu}$ is invariant under the action of $H_n$, and does not depend on the base tableau $T'$ of shape $\mu$ (i.e., picking a different base tableau generates an isomorphic $H_n$-module).
\end{lemma}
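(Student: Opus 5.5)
Invariance is shown generator by generator, using the weight-basis description of $V_{\lambda}$ in Proposition \ref{prop_Sn_reps}. Independence from $T'$ is then shown by writing down an explicit isomorphism that matches weight vectors labeled by the same skew tableau.

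\emph{Translations.} For $i = m+1, \ldots , m+n$, the element $Y_i$ acts on $V_{\lambda}$ as the JM element $X_i$. By the weight formula, $X_i \cdot v_T = \cont(b(T,i)) \cdot v_T$, so each $v_T$ spanning $V_{\lambda / \mu}$ is sent to a multiple of itself. Hence the $Y_i$ preserve $V_{\lambda / \mu}$.

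\emph{Transpositions.} Take $\sigma_i$ with $m+1 \leq i \leq m+n-1$. By the local action principle, $\sigma_i \cdot v_T = a v_T + b v_{T''}$, where $T''$ is $T$ with $i$ and $i+1$ interchanged (and $b=0$ if that is not standard). Both $i$ and $i+1$ exceed $m$, so the interchange leaves the boxes holding $1, \ldots , m$ untouched. Thus $T''$ still contains $T'$, and $v_{T''} \in V_{\lambda / \mu}$. Since $H_n$ is generated by these $\sigma_i$ and $Y_i$, the first claim follows.

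\emph{Independence of $T'$.} Let $T'_1, T'_2$ be two tableaux of shape $\mu$, with corresponding subspaces $V^{(1)}, V^{(2)}$. For $S$ a skew tableau of shape $\lambda / \mu$, write $T_j(S)$ for the tableau of shape $\lambda$ obtained by filling $\mu$ with $T'_j$ and $\lambda/\mu$ with $S$ shifted by $m$. Pick $\tau \in S_m \subset S_{m+n}$ (generated by $\sigma_1, \ldots , \sigma_{m-1}$) with $\tau \cdot v_{T_1(S_0)}$ having a nonzero component along $v_{T_2(S_0)}$ for some $S_0$. Such a $\tau$ exists by irreducibility of $V_\mu$ via the branching rule, or more concretely via a chain of intertwiners $\varphi_1, \ldots , \varphi_{m-1}$ taking $v_{T'_1}$ to a multiple of $v_{T'_2}$ inside $V_{\mu}$.

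The cleanest choice is $\Phi = \varphi_{\pi}$, the product of intertwiners $\varphi_j$ with $j<m$ realizing a sequence of switches that carries $T'_1$ to $T'_2$ through standard tableaux. Each step is a nonzero exchange $v_T \mapsto v_{T^{\rm sw}}$ by the intertwiner part of Proposition \ref{prop_Sn_reps}, and such switch sequences exist because the standard tableaux of shape $\mu$ are connected under non-adjacent switches.

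Because $\Phi$ lies in the subalgebra generated by $\sigma_j, X_j$ with $j \leq m$, it commutes with $H_n$. Indeed, $\sigma_j$ for $j<m$ commutes with $\sigma_i$ for $i>m$ and with $X_i$ for $i>m$, and $X_j$ for $j\le m$ commutes with $S_n$ and with the other $X_i$. Hence $\Phi$ is an $H_n$-module map $V^{(1)} \to V^{(2)}$ sending $v_{T_1(S)}$ to a nonzero multiple of $v_{T_2(S)}$. It is therefore a bijection, which gives the isomorphism.

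The expected obstacle is pinning down this intertwining map and checking that it really lands in $V^{(2)}$ and is nonzero on each basis vector. The nonzeroness needs the intertwiner rule: a switch of $j, j+1$ that are non-adjacent in $T'_1$ stays non-adjacent in any $T_1(S)$. This holds because the positions of $j$ and $j+1$ depend only on $T'_1$.
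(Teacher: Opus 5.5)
Your proof is correct. The invariance half matches the paper's argument: the local action principle for $\sigma_{m+i}$, the weight formula for $Y_{m+i}=X_{m+i}$, and the fact that preservation by generators suffices.

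For independence of $T'$ you take a genuinely different route. The paper only observes that $\sigma_{m+i}$ and $X_{m+i}$ ``do not touch $T'$''. From this it concludes that the $H_n$-action on the weight basis sees only the skew tableau, so that the relabeling $v_{T_1(S)}\mapsto v_{T_2(S)}$ is an isomorphism. That argument is quick and keeps the labels literally unchanged. However, it tacitly requires normalizing the $v_T$ compatibly, for instance by Young's seminormal or orthogonal form, whose off-diagonal coefficients depend only on the contents of $i,i+1$. This matters because the coefficient $b$ in the local action principle depends on the arbitrary scalings of the weight vectors.

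Your $\Phi\in\mathbb{C}[S_m]$ lies in the centralizer of $H_n$, so it is an $H_n$-map by construction. The nonzero scalars $c_S$ absorb any normalization, so no choice of basis is needed. Structurally, your argument is a concrete form of identifying $V_{\lambda/\mu}$ with the multiplicity space $\mathrm{Hom}_{S_m}(V_\mu,V_\lambda)$, with $V_\lambda$ restricted to $S_m$.

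The price is two facts you should state with proof:
\begin{enumerate}
\item $\varphi_j v_T\neq 0$ when $j,j+1$ are non-adjacent in the straight tableau $T$. This holds because $\varphi_j^2$ acts on $v_T$ by $1-d^2$, where $d$ is the content difference and $|d|\geq 2$.
\item The standard tableaux of shape $\mu$ are connected under admissible switches. To see this, move $m$ into a fixed outer box by successive switches $k\leftrightarrow k+1$. Each such switch is admissible whenever $k$ occupies an outer box. Then remove that box and induct.
\end{enumerate}

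Finally, drop the preliminary paragraph with an arbitrary $\tau\in S_m$. It is unnecessary, and on its own it would not produce a map into $V^{(2)}$.
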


\begin{proof}
Write $\tilde{\sigma_i} = \sigma_{m+i}$ and $\tilde{Y_i} = Y_{m+i}$, where $Y_{m+i}$ acts in $V_{\lambda}$ by $X_{m+i}$. By the local action principle, which says that when we view weight vectors as tableaux, $\sigma_{m+i}$ acts on $m+i$ and $m+i+1$ only, $V_{\lambda / \mu}$ is invariant under the action of $S_n$. The weight vectors that span $V_{\lambda / \mu}$ are preserved by $X_{m+i} = \tilde{Y_i}$, and the $H_n$-invariance follows from the fact that $H_n$ is spanned by elements of the form $\tilde{\sigma} \cdot \tilde{Y_1}^{d_1} \cdots \tilde{Y_n}^{d_n}$, where $\tilde{\sigma} \in S_n$. Since neither $\tilde{\sigma}$ nor the $\tilde{Y_i}$ touch $T'$ by the above, picking a different $T'$ leads to an isomorphic $H_n$-module. 
\end{proof}

To begin our approach to RSK, let $\lambda = (n, n-1, \ldots , 1)$ and $\mu = (n-1, n-2, \ldots , 1)$, so that $\lambda / \mu$ is a staircase in the sense of Section \ref{sec_bump_slide}. Let $a_1, \ldots , a_n$ be the contents of the boxes of $\lambda / \mu$ read left to right. Then $(a_1, \ldots , a_n) = (1-n, 3-n, \ldots, n-3, n-1)$, i.e., $a_i = 2i - n - 1$. {\em We fix these values for the $a_i$ for the rest of the paper.}\footnote{We could use any skew shape with no two boxes in the same row or column, and set the $a_i$ to be the values of the contents of its boxes, but the staircase is simplest.} Then we have (see also \cite{bib_AR}):

\begin{lemma} \label{Hn_mod_embedding}
$V_{\lambda / \mu}$ is isomorphic to $V(a_1, \ldots , a_n)$ as an $H_n$-module. In particular, $V_{\lambda / \mu}$ is irreducible.
\end{lemma}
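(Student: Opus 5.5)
Since $V(a_1,\ldots,a_n)$ is induced from the character $Y_i\mapsto a_i$ of $\mathbb{C}[Y_1,\ldots,Y_n]$, Frobenius reciprocity (the universal property of the induced module) gives a nonzero $H_n$-homomorphism $V(a_1,\ldots,a_n)\to V_{\lambda/\mu}$. It is enough to exhibit a vector $u\in V_{\lambda/\mu}$ with $\tilde Y_i u = a_i u$ for all $i$; the map then sends $h\otimes v\mapsto h\cdot u$. The natural candidate is $u=v_{T_0}$, where $T_0$ is the standard tableau of shape $\lambda$ containing the fixed base tableau $T'$ and having $m+i$ in the $i$-th box of the staircase $\lambda/\mu$, read left to right (SW to NE). Any filling of the staircase is standard, since no two of its boxes share a row or column; so $T_0$ is standard. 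By the weight formula in Proposition \ref{prop_Sn_reps}, $\tilde Y_i v_{T_0}=X_{m+i}v_{T_0}=\cont(b(T_0,m+i))\,v_{T_0}=a_i v_{T_0}$.

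Next I would show the map is an isomorphism. Both spaces have dimension $n!$: $V(a_1,\ldots,a_n)$ by construction, and $V_{\lambda/\mu}$ because its basis is indexed by standard skew tableaux of staircase shape, which are all $n!$ fillings. Hence it suffices to show the map is injective, or equivalently surjective. The $a_i=2i-n-1$ are distinct, and any two differ by an even number, so $a_i-a_j\neq\pm1$. Proposition \ref{generic_modules} therefore says $V(a_1,\ldots,a_n)$ is irreducible. A nonzero homomorphism out of an irreducible module is injective, and comparing dimensions gives the isomorphism. Irreducibility of $V_{\lambda/\mu}$ then follows immediately.

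As a more concrete cross-check, I would also note that the image of $v_\sigma=\varphi_\sigma v$ is $\varphi_\sigma v_{T_0}$. By the intertwiner part of Proposition \ref{prop_Sn_reps}, each $\tilde\varphi_i$ moves $v_T$ to a nonzero multiple of $v_{T''}$, where $T''$ has $m+i$ and $m+i+1$ swapped; this never vanishes, because entries in the staircase are never adjacent. So the image of the weight basis is, up to nonzero scalars, the full basis $\{v_T\}$ of $V_{\lambda/\mu}$. This gives surjectivity directly.

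The main point requiring care is the existence of the homomorphism out of the induced module. Because $H_n$ is free over $\mathbb{C}[Y_1,\ldots,Y_n]$ (the monomial basis), the tensor product is well behaved and the universal property applies without subtlety. The only genuine check is the weight computation for $T_0$, namely that the $i$-th staircase box has content $2i-n-1$. This holds because the box in row $n+1-i$ and column $i$ has content $i-(n+1-i)$.
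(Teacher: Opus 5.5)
Your proof is correct. It uses the same map as the paper, $v\mapsto v_{T_0}$, but establishes its properties differently.

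The paper defines the map on weight bases, $\varphi_\sigma v\mapsto \varphi_\sigma v_{T_0}$. It argues the map is $H_n$-linear because it intertwines the $Y_i$ and the $\varphi_i$, and hence the $\sigma_i$. It reads off bijectivity from weight basis going to weight basis, and only at the end invokes Proposition \ref{generic_modules} to transport irreducibility.

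You instead get $H_n$-linearity for free from the universal property of induction. You get bijectivity from Proposition \ref{generic_modules} (a nonzero map out of an irreducible module is injective) together with the dimension count.

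Your route is cleaner, because it avoids checks the paper leaves implicit:
\begin{itemize}
\item commuting with $\varphi_i$ when $\ell(\sigma_i\sigma)<\ell(\sigma)$ uses that $\varphi_i^2$ acts by the same scalar on vectors of equal weight;
\item recovering $\sigma_i$ from $\varphi_i$ requires inverting $Y_i-Y_{i+1}$ on weight vectors;
\item one needs $\varphi_\sigma v_{T_0}\neq 0$.
\end{itemize}
In exchange, the paper's route directly produces the explicit correspondence $v_\sigma\leftrightarrow v_{\sigma T_0}$ (up to scalars) that the later embedding $\eta$ relies on. Your cross-check paragraph recovers this correspondence.

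Two minor points. First, the adjunction for induced modules holds for any subalgebra, so freeness over $\mathbb{C}[Y_1,\ldots,Y_n]$ matters only for $\dim V(a_1,\ldots,a_n)=n!$, not for the universal property. Second, the paper reserves $\tvarphi_i$ for the normalized intertwiner, so $\varphi_{m+i}$ is the safer name in your cross-check. Either works there, since the normalizing denominators do not vanish on the staircase.
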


\begin{proof}
As any tableau of this shape is standard, $V_{\lambda / \mu}$ has a basis of $n!$ weight vectors. For each one, the weights are a rearrangement of the contents of the boxes of the staircase, which are exactly $a_1, \ldots , a_n$.The isomorphism is simply to map the vacuum $v \in V(a_1, \ldots , a_n)$ to the weight vector $v_{T_0} \in V_{\lambda / \mu}$ corresponding to the tableau
\[
T_0 = \ytableaushort{\none\none\none {n},\none \none \iddots, \none {2}, {1}}.
\]
$v$ and $v_{T_0}$ have the same weights by construction. We extend to a map 
\[
\varphi_{\sigma} \cdot v \mapsto \varphi_{\sigma} \cdot v_{T_0},
\] 
which maps the weight basis of $V(a_1, \ldots , a_n)$ to the weight basis of $V_{\lambda / \mu}$, and preserves the weights by Proposition \ref{prop_switching}. I.e., our map commutes with the action of both the $Y_i$ and the $\varphi_i$, which implies that it commutes with the action of the $\sigma_i$, making it an isomorphism of $H_n$-modules. $V(a_1, \ldots , a_n)$ is a generic induced module in the sense of Section \ref{sec_Hn_reps} because the weights of the vacuum go up by steps of $2$, so it is irreducible by Proposition \ref{generic_modules}, and thus $V_{\lambda / \mu}$ must be as well.
\end{proof}

\subsection{Renormalization}
The bridge between RSK and the representation theory of $S_n$ is that we can use sequences of $\varphi_i$, each of which interchanges $i$ and $i+1$ inside standard tableaux, to realize jdt slides, via the combinatorics of Section \ref{sec_slide_switch}. A technical issue to address is that, by part (\ref{Sn_intertwiners}) of Proposition \ref{prop_Sn_reps},  when $i$ and $i+1$ are in adjacent boxes, i.e., whenever the weights of $X_i$ and $X_{i+1}$ differ by $1$ or $-1$, $\varphi_i$ acts by $0$.\footnote{This behavior is specific to $S_n$, and is more restrictive than the general $H_n$-case, where we only have $\varphi_i^2 = 0$ by Lemma \ref{lemma_intertwiners}, part (\ref{phi_square}). It comes about because $S_n$-modules decompose into direct sums.}

If $i$ and $i+1$ are in adjacent boxes, then $X_i - X_{i+1}$ acts by $1$ or $-1$, but not both. Let us write
\begin{equation} \label{normal_int}
\tvarphi_i = \frac{\varphi_i}{(X_i - X_{i+1} + 1)(X_i - X_{i+1} - 1)} = \frac{\sigma_i(X_i - X_{i+1}) + 1}{(X_i - X_{i+1} + 1)(X_i - X_{i+1} - 1)}.
\end{equation}
Because of the denominator, we have to take care with what we mean by this. To begin with, we can compute how the denominator acts on weight vectors since the $X_i$ act by scalars, so (\ref{normal_int}) at least makes sense on the space spanned by (weight vectors corresponding to) tableaux in which $i$ and $i+1$ appear in different rows and columns so that the denominator doesn’t vanish. Next, (\ref{normal_int}) leaves it ambiguous whether the denominator should act before or after the numerator, but we will get the same answer either way by Lemma \ref{lemma_intertwiners}, part (\ref{eigenexchange2}), since the denominator is equal to $(X_i - X_{i+1})^2 - 1$, and hence invariant under switching $X_i$ and $X_{i+1}$. Finally, for tableaux with $i$ and $i+1$ in the same row or column, we have $\sigma_i = \pm 1$ when $X_i - X_{i+1} = \mp 1$, so in either case, both the numerator and denominator, considered as functions of $X_i$, have first order zeros. Cancelling these and looking at the remaining term (in the denominator), we find that $\tvarphi_i$ acts by $\pm \frac12$, depending on whether $i$ and $i+1$ are in the same column or row. Hence we have
\begin{lemma} (Properties of $\tvarphi_i$) \label{lemma_normal_int}
\begin{enumerate}
\item $\tvarphi_i$ is a well-defined operator in any representation of $S_n$,
\item $\tvarphi_i \tvarphi_j = \tvarphi_j \tvarphi_i$ if $|i-j| > 1$,
\item $\tvarphi_i X_j = X_j \tvarphi_i$ if $|i-j| > 1$.
\end{enumerate}
\end{lemma}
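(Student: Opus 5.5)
The plan is to reduce everything to the weight decomposition of Proposition \ref{prop_Sn_reps}. Any finite-dimensional representation of $S_n$ is a direct sum of irreducibles $V_\lambda$. Each $V_\lambda$ has the weight basis $\{v_T\}$ on which every $X_j$ acts by $\cont(b(T,j))$. So it suffices to define $\tvarphi_i$ on each $v_T$, check that the definition is independent of the chosen decomposition, and then verify the commutation relations on weight vectors.

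For well-definedness, we split into the two cases described just before the statement.

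In the first case, $i$ and $i+1$ lie in different rows and columns of $T$. Then $c=\cont(b(T,i))-\cont(b(T,i+1))$ satisfies $|c|\ge 2$, since two boxes on adjacent diagonals would be adjacent. The denominator therefore acts on $v_T$ by the nonzero scalar $c^2-1$, and also on $v_{T'}$ with the same scalar, because $c^2-1$ is symmetric under swapping $X_i$ and $X_{i+1}$. By Lemma \ref{lemma_intertwiners}(\ref{eigenexchange3}), the numerator carries $v_T$ into $\mathbb{C}v_{T'}$. Hence we set $\tvarphi_i v_T = (c^2-1)^{-1}\varphi_i v_T$, and the order of numerator and denominator does not matter.

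In the second case, $i$ and $i+1$ are adjacent in $T$. Then $c=\mp1$ and $\sigma_i v_T=\pm v_T$ by the local action principle. We regard both numerator and denominator as polynomials in the formal eigenvalue $c$, with $\sigma_i$ replaced by $\pm1$. The numerator is $\pm c+1$, which vanishes to first order at $c=\mp1$, and cancelling it against the factor $(c\pm1)$ of the denominator leaves the constant $\pm\tfrac12$ noted above. We define $\tvarphi_i v_T$ to be this scalar multiple of $v_T$.

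In both cases the result depends only on the isotypic data and on the operators $\sigma_i, X_i, X_{i+1}$. Those operators are intrinsic, since the $X_j$ lie in $\mathbb{C}[S_n]$. Equivalently, $\tvarphi_i$ is the image of one fixed element of $\mathbb{C}[S_n]$ defined on each $V_\lambda$, so it is independent of the decomposition and commutes with all $S_n$-intertwiners. This settles part (1).

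For part (3), when $|i-j|>1$, $X_j$ commutes with $\sigma_i$ by the Hecke relations, and it commutes with $X_i$ and $X_{i+1}$. So $X_j$ commutes with the numerator and denominator on each weight space. Concretely, $\tvarphi_i$ maps $v_T$ into $\mathbb{C}v_T\oplus\mathbb{C}v_{T'}$, and $T$ and $T'$ agree in the position of $j$, so $X_j$ acts on both by the same scalar.

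For part (2), we compute on $v_T$. Applying $\tvarphi_j$ and then $\tvarphi_i$ gives a combination of $v_T$, $v_{T^{(i)}}$, $v_{T^{(j)}}$ and $v_{T^{(i)(j)}}$, where the superscripts denote the swaps of $i,i+1$ and of $j,j+1$. The swaps affect disjoint pairs of entries, and the case distinction (adjacent or not) for one pair is unaffected by swapping the other pair. The scalars are also unaffected, since the difference $\cont(b(T,i))-\cont(b(T,i+1))$ does not change when $j$ and $j+1$ are swapped. Combined with $\varphi_i\varphi_j=\varphi_j\varphi_i$ from Lemma \ref{lemma_intertwiners}, this gives commutation on every basis vector.

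The main obstacle is making the second case of part (1) rigorous, since it asks for an honest operator rather than a formal limit. The cleanest route is to argue that on the span of $v_T$ with $i,i+1$ adjacent, the operator $\varphi_i$ factors as $(X_i-X_{i+1}\pm1)$ times an operator that acts by $\pm1$, which can be cancelled explicitly. This gives the explicit value $\pm\tfrac12$ and avoids any limiting argument.
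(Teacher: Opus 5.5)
Your proposal is correct and takes essentially the same route as the paper: split on whether $i$ and $i+1$ are adjacent, cancel the common first-order zero in the adjacent case, and read off (2) and (3) from Lemma \ref{lemma_intertwiners}, since on each weight vector $\tvarphi_i$ is either a rescaled $\varphi_i$ or a scalar. One small correction, inherited from the paper's text: your cancellation actually gives $-\tfrac12$ in both the row case and the column case, because $\sigma_i(X_i-X_{i+1})=-1$ in both, rather than $\pm\tfrac12$; this is harmless for all three assertions.
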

The commutation relations for $|i-j|$ are clear from the corresponding relations in Lemma \ref{lemma_intertwiners}, since $\tvarphi$ is either a rescaling of $\varphi$ or acts by a scalar. However, the corresponding relations for $i, i+1$ are not always satisfied. Importantly, if $i$ and $i+1$ are in adjacent boxes, $X_i$ and $X_{i+1}$ will commute with $\tvarphi_i$ rather than switching with each other when moving across $\tvarphi_i$ as in Lemma \ref{lemma_intertwiners}. As we will see shortly, this is what enables the shift in the weights of the $X_i$ that corresponds to jdt slides and rectification. Similarly, to see that the braid relation is not always satisfied, act on $\ytableaushort{12,3}$ by $\tvarphi_1 \tvarphi_2 \tvarphi_1$ and $\tvarphi_2 \tvarphi_1 \tvarphi_2$.

\subsection{Sliding Operators}
Recalling the realization of jdt slides in terms of switches from Section \ref{sec_slide_switch}, let us define the {\em jdt inner slide operator} $\jdt_{(m, m+n)}$ acting in $\mathbb{C}[S_{m+n}]$ by
\begin{equation}
\jdt_{(m, m+n)} = \tvarphi_{m+n-1} \tvarphi_{m+n-2} \cdots \tvarphi_{m+1} \tvarphi_m.
\end{equation}
Tracing the action on the tableaux that correspond to weight vectors for $S_{m+n}$, $\jdt_{(m, m+n)}$ switches (up to a constant) $m$ and $m+1$, then $m+1$ and $m+2$, and so on, all the way up to $m+n-1$ and $m+n$, where each switch is carried out only if it results in a standard tableau (otherwise $\tvarphi_i$ acts by a constant). By Lemma \ref{lemma_slide_switch}, this corresponds to an inner slide of the skew tableau containing $m+1, m+2,  \ldots , m+n$ into the inner corner represented by the box containing $m$, with the entries of the resulting skew tableau decreased by $1$ (so they run from $m$ to $m+n-1$), and the final location of $m+n$ being erased, or forgotten.

Similarly, we define the {\em jdt outer slide operator} $\jdt^{(m+1, m+n+1)}$ acting in $\mathbb{C}[S_{m+n+1}]$ by
\begin{equation}
\jdt^{(m+1, m+n+1)} = \tvarphi_{m+1} \tvarphi_{m+2} \cdots \tvarphi_{m+n-1} \tvarphi_{m+n}.
\end{equation}
The action on weight vectors and the corresponding tableaux switches $m+n$ and $m+n+1$, then $m+n-1$ and $m+n$, and so on, with the last switch acting on $m+1$ and $m+2$, and with only switches that result in standard tableaux being carried out. By Lemma \ref{lemma_slide_switch}, this corresponds to an outer slide of the tableau containing $m+1, m+2, \ldots , m+n$ into the outer corner represented by the box containing $m+n+1$, with the entries of the resulting tableau increased by $1$, so they run from $m+2$ to $m+n+1$, and the final location of $m+1$ being forgotten.

An example will make it clear how the commutation relations between the $X_i$ and the $\varphi_i$ and $\tvarphi_i$ lead to slides (that is, to eigenvalues of some $X_i$ shifting up or down by 1, keeping in mind reindexing). Similar to Section \ref{sec_slide_switch}, consider an inner slide for the skew tableau $\ytableaushort{\none 24,13}$:
\[
\ytableaushort{\bullet 24,13}
\longrightarrow
\ytableaushort{124, \bullet 3}
\longrightarrow
\ytableaushort{124,3}
\]
We see that $1$ slides up and $3$ slides left. In terms of switches, this is realized as
\[
\ytableaushort{\bullet 24,13}
\xrightarrow{\cong}
\ytableaushort{135,24}
\xrightarrow[{\rm skip}]{(1 \ 2)} \ \ 
\xrightarrow{(2 \ 3)}
\ytableaushort{125,34}
\xrightarrow[{\rm skip}]{(3 \ 4)} \ \
\xrightarrow{(4 \ 5)}
\ytableaushort{124,35}
\xrightarrow{\cong}
\ytableaushort{124,3}.
\]
In terms of algebra, we have $m=1$, $n=4$, and we look at the action of $X_2, X_3, X_4, X_5$ on $v_T$, for $T = \ytableaushort{135,24}$, before and after applying $\tvarphi_1, \tvarphi_2, \tvarphi_3, \tvarphi_4$. For $X_2$, we have
\begin{eqnarray*}
\tvarphi_4 \tvarphi_3 \tvarphi_2 \tvarphi_1 X_2 \cdot v_T &=& \tvarphi_4 \tvarphi_3 \tvarphi_2  X_2 \tvarphi_1 \cdot v_T \\
&=& \tvarphi_4 \tvarphi_3 \tvarphi_2  (X_1-1) \tvarphi_1 \cdot v_T \\
&=& (X_1-1) \tvarphi_4 \tvarphi_3 \tvarphi_2 \tvarphi_1 \cdot v_T. 
\end{eqnarray*}
We have the first equation because $\tvarphi_1$ acting by a scalar on $v_T$, because $1$ and $2$ are adjacent in $T$. The second captures $1$ and $2$ appearing in the same column, which means $X_1 - X_2 = 1$ when they act on $v_T$, and hence $X_2 = X_1 -1$. This corresponds to $1$ sliding up in the jdt slide sequence. Finally, the third follows from Lemma \ref{lemma_normal_int}.

Lemma \ref{lemma_normal_int} also gives us the first and third steps in
\begin{eqnarray*}
\tvarphi_4 \tvarphi_3 \tvarphi_2 \tvarphi_1 X_3 \cdot v_T &=& \tvarphi_4 \tvarphi_3 \tvarphi_2 X_3 \tvarphi_1 v_T \\
&=& \tvarphi_4 \tvarphi_3 X_2 \tvarphi_2 \tvarphi_1 v_T \\
&=& X_2 \tvarphi_4 \tvarphi_3 \tvarphi_2 \tvarphi_1 v_T,
\end{eqnarray*}
while the second is valid because when we apply $\tvarphi_2$, $2$ and $3$ are in different rows and columns, so $\tvarphi_2 = \const \cdot \varphi_2$ (i.e., the slide sequence leaves $2$ in place) and we can apply the standard relation from Lemma \ref{lemma_intertwiners}. A similar calculation applies for $X_5$, where we apply $\varphi_4 X_5 = X_4 \varphi_4$ because the jdt sequence leaves $4$ in place. 

Finally, we have
\begin{eqnarray}
\tvarphi_4 \tvarphi_3 \tvarphi_2 \tvarphi_1 X_3 \cdot v_T &=& \tvarphi_4 \tvarphi_3 X_4  \tvarphi_2 \tvarphi_1 v_T \\
&=& \tvarphi_4 (X_3 + 1) \tvarphi_3 \tvarphi_2 \tvarphi_1 v_T \\
&=& (X_3 + 1) \tvarphi_4 \tvarphi_3 \tvarphi_2 \tvarphi_1 v_T,
\end{eqnarray}
where the second step captures $3$ and $4$ appearing in the same row at the time we apply $\tvarphi_3$, meaning that $X_3 - X_4 = -1$, corresponding to $3$ sliding left in the jdt slide sequence.

Note that in all these calculations, we started with $X_{m+i}$ and turned it into $X_{m+i-1}$, which captures the reindexing that shifts the entries of our initial skew tableau up by $1$, and then returns them to their original values via the sequence of switches.

In general, let $\lambda / \mu$ be a skew shape with $|\lambda| = m+n$, $|\mu| = m$, $|\lambda / \mu| = n$. Given a skew tableau $\tilde{T}$ of shape $\lambda / \mu$, construct a corresponding pair of tableaux $T$ and $T'$ of shape $\lambda$ and $\mu$ as in Section \ref{skew_reps}. That is, let $T'$ be any standard filling of $\mu$, and extend this to $T$ by shifting each entry of $\tilde{T}$ up by $m$. Let $c$ be the box in $T$ containing $m$. Then we have
\begin{prop} \label{prop_inner_slide}
Let $v_T$ be any weight vector in $\mathbb{C}[S_{m+n}]$ with weight given by $T$, and let $T'$ be the subtableau of $T$ containing $1, \ldots , m$. Then $\jdt_{(m, m+n)} \cdot v_T$ is a weight vector whose weight restricted to $S_{m+n-1}$ is given by $\jdt_c(T/T') = \jdt_c(\tilde{T})$, which we view as an $m+n-1$-box tableau by adding $m-1$ to each entry and filling in the inner shape with $1, \cdots ,m-1$ as in $T'$.
\end{prop}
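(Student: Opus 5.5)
Throughout, $v_T$ is a weight vector for the left action, and the operators $\tvarphi_i$ and $X_j$ act on the left. The plan is to track the action of $\jdt_{(m,m+n)} = \tvarphi_{m+n-1}\cdots\tvarphi_m$ on $v_T$ one factor at a time, following the switching procedure of Lemma \ref{lemma_slide_switch}.

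Set $u_k = \tvarphi_{k-1}\cdots\tvarphi_m \cdot v_T$ for $m \le k \le m+n$, with $u_m = v_T$. By induction on $k$ we show that $u_k$ is a nonzero multiple of $v_{T_k}$, where $T_k$ is the tableau produced after the first $k-m$ steps of the switching procedure. The step from $u_k$ to $u_{k+1}$ splits into two cases, using part (\ref{Sn_intertwiners}) of Proposition \ref{prop_Sn_reps} and the definition (\ref{normal_int}).

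\begin{enumerate}
\item If $k$ and $k+1$ are not adjacent in $T_k$, then $X_k - X_{k+1}$ acts by a scalar outside $\{0,\pm1\}$. In this case $\tvarphi_k$ is a nonzero rescaling of $\varphi_k$, which sends $v_{T_k}$ to $v_{T_{k+1}}$, where $T_{k+1}$ is $T_k$ with $k$ and $k+1$ switched.
\item If $k$ and $k+1$ are adjacent, then $\tvarphi_k$ acts by $\pm\tfrac12$, and $T_{k+1}=T_k$.
\end{enumerate}

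This matches step by step the procedure in Lemma \ref{lemma_slide_switch}, so $T_{m+n}$ is exactly the tableau described there.

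To identify the weights, I would compute $X_j \cdot u_{m+n}$ for $j \le m+n-1$ directly by commuting $X_j$ to the right through the product, as in the worked example.

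\begin{enumerate}
\item For $j<m$, every $\tvarphi_i$ in the product has $|i-j|>1$ or $i=m=j+1$. When $i=m=j+1$, the factor $\tvarphi_m$ only involves $m$ and $m+1$, so its first-order behaviour still fixes $X_{m-1}$. The cleanest way to see this is that $X_{j}$ for $j<m$ lies in $\mathbb{C}[S_{j}]$, which commutes with $\sigma_i$ for $i>j$ and with $X_i$ for all $i$. Hence the entries $1,\ldots,m-1$ keep their boxes and contents, matching $T'$ with $m$ removed.
\item For $m \le j \le m+n-1$, write the product as $A\,\tvarphi_j\,B$, with $B=\tvarphi_{j-1}\cdots\tvarphi_m$ and $A$ the remaining factors on the left. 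Lemma \ref{lemma_normal_int} gives $X_j A = A X_j$. On the vector $u_{j+1}=\tvarphi_j u_j$ there are again two cases.
\begin{enumerate}
\item In the non-adjacent case, $X_j\tvarphi_j = \tvarphi_j X_{j+1}$ by Lemma \ref{lemma_intertwiners}, since the scalar rescaling is symmetric in $X_j$ and $X_{j+1}$.
\item In the adjacent case, $X_j\tvarphi_j u_j = \tvarphi_j X_j u_j = \tvarphi_j (X_{j+1}\pm1)u_j$. The sign is determined by whether $j$ and $j+1$ share a column or a row.
\end{enumerate}
Then $X_{j+1}$ passes through $B$ using Lemma \ref{lemma_normal_int} (it commutes with $\tvarphi_i$ for $i<j$) and acts on $v_T$ by the content of $j+1$ in $T$.
\end{enumerate}

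So the eigenvalue of $X_j$ on $u_{m+n}$ is the original content of the entry $j+1$, shifted by $\pm 1$ exactly when that entry moved up or left. This is precisely the content of the box holding $j$ in $\jdt_c(\tilde T)$, reindexed as in the statement.

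The main obstacle is making the inductive bookkeeping honest. The adjacency condition at stage $j$ concerns $T_j$, not $T$, so one must verify that the case split matches the slide or no-slide dichotomy of Lemma \ref{lemma_slide_switch}. I would handle this by proving the inductive claim about $T_k$ first and then reading adjacency off $T_k$. A second point is that Lemma \ref{lemma_slide_switch} is phrased with the inner corner holding the new entry $1$, whereas here the entry $m$ plays that role. After the shift by $m-1$ this is the same procedure, because $m$ occupies an outer box of $T'$, which is an inner corner of $\lambda/\mu$.

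Finally, the final eigenvalue of $X_{m+n}$ is irrelevant, since only the restriction to $S_{m+n-1}$ is claimed. A common eigenvector of $X_1,\ldots,X_{m+n-1}$ whose eigenvalues are those of a standard tableau $S$ generates, via Proposition \ref{prop_Sn_reps} and the branching rule, an $S_{m+n-1}$-weight vector of type $S$. This completes the plan.
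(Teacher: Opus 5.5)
Your proposal is correct and takes essentially the same approach as the paper. Like the paper, you commute the JM element across $\tvarphi_{m+n-1}\cdots\tvarphi_m$, with trivial commutation for $j<m$ and the three-way split (fixed, slides left, slides up) at the factor $\tvarphi_j$, and you finish by noting that the content vector determines the tableau. Your explicit induction on the intermediate tableaux $T_k$ only makes rigorous what the paper leaves implicit, and moving $X_j$ past $\tvarphi_{j+1}$ needs $\tvarphi_i X_k = X_k \tvarphi_i$ for all $k\notin\{i,i+1\}$, slightly more than Lemma \ref{lemma_normal_int} states but true for the same reason and used tacitly by the paper as well.
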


\begin{proof}
In terms of JM elements and weights, the last part of the Proposition (filling in the inner shape according to $T'$) asserts that $X_i \cdot \jdt_{(m, m+n)} \cdot v_T = X_i \cdot v_T$ if $i = 1, \ldots , m-1$, which is obvious since $\jdt_{(m, m+n)}$ doesn’t touch $\mathbb{C}[S_{m-1}] \subset \mathbb{C}[S_{m+n}]$.

For the main (sliding) part of the assertion, we look at the action of $X_m, X_{m+1}, \ldots , X_{m+n-1}$, i.e., we compute $X_{m+i} \cdot \jdt_{(m, m+n)} \cdot v_T$ for $i = 0, \ldots , n-1$. Consider
\[
\jdt_{(m, m+n)} \cdot X_{m+i+1} \cdot v_T = \tvarphi_{m+n-1} \tvarphi_{m+n-2} \cdots \tvarphi_{m+1} \tvarphi_m \cdot X_{m+i+1} \cdot v_T.
\]
Moving $X_{m+i+1}$ to the left, $X_{m+i+1}$ commutes past the $\tvarphi_j$ by Lemma \ref{lemma_normal_int} until it hits $\tvarphi_{m+i}$.

At this point, if $\varphi_{m+i}$ does not kill $\tvarphi_{m+i+1} \cdots \tvarphi_m \cdot v_T$, meaning that $i+1$ stays in place in $\jdt_c(\tilde{T})$, then $\tvarphi_{m+i}$ acts as a linear multiple of $\varphi_{m+i}$, and we have $\tvarphi_{m+i} X_{m+i+1} = X_{m+i} \tvarphi_{m+i}$.

If $i+1$ slides left in $\jdt_c(\tilde{T})$, then $m+i$ and $m+i+1$ are currently in the same row, $\sigma_{m+i} = 1$, $\varphi_{m+i} = 0$, $\tvarphi_{m+i}$ is a constant so $X_{m+i+1}$ commutes past it without change, and we have the relation $X_{m+i} - X_{m+i+1} = -1$. Thus we can substitute $X_{m+i} + 1$ in for $X_{m+i+1}$, and commute this to the left past the rest of the $\tvarphi_i$ as in the previous case.

Finally, if $i+1$ slides up in $\jdt_c(\tilde{T})$, then $m+i$ and $m+i+1$ are currently in the same column, $\sigma_{m+i} = -1$, $\varphi_{m+i} = 0$, $\tvarphi_{m+i}$ is a constant so $X_{m+i+1}$ commutes past it without change, and we have the relation $X_{m+i} - X_{m+i+1} = 1$. Thus we can substitute $X_{m+i} - 1$ in for $X_{m+i+1}$, and again commute this to the left past the rest of the $\tvarphi_i$.

Thus we have
\begin{equation} \label{slide_comm_rel}
\jdt_{(m, m+n)} \cdot X_{m+i+1} \cdot v_T =
\begin{cases}
X_{m+i} \cdot \jdt_{(m, m+n)} \cdot v_T & \text{$i+1$ fixed in }\jdt_c(\tilde{T}) \\
(X_{m+i} + 1) \cdot \jdt_{(m, m+n)} \cdot v_T & \text{$i+1$ slides left in }\jdt_c(\tilde{T}) \\
(X_{m+i} - 1)  \cdot \jdt_{(m, m+n)} \cdot v_T & \text{$i+1$ slides up in }\jdt_c(\tilde{T}),
\end{cases}
\end{equation}
or, equivalently,
\[
X_{m+i} \cdot \jdt_{(m, m+n)} \cdot v_T = 
\begin{cases}
c_T(m+i+1) \cdot \jdt_{(m, m+n)} \cdot v_T & \text{$i+1$ fixed in }\jdt_c(\tilde{T}) \\
(c_T(m+i+1)) - 1) \cdot \jdt_{(m, m+n)} \cdot v_T & \text{$i+1$ slides left in }\jdt_c(\tilde{T}) \\
(c_T(m+i+1) + 1) \cdot \jdt_{(m, m+n)} \cdot v_T & \text{$i+1$ slides up in }\jdt_c(\tilde{T}),
\end{cases}
\]
where $c_T(m+i+1) = \cont(b(T, m+i+1)) $, the content of the box containing $m+i+1$ in $T$. Taking the shift in indices by $m$ into account, this corresponds exactly to the contents of $\jdt_c(\tilde{T})$. Since a tableau (a weight tuple) is determined by the contents corresponding to its entries (the weights) \cite{bib_OV}, this proves the Proposition. 
\end{proof}

For outer slides, given an outer corner $c'$ of $T$, let $\hat{T}$ be the tableau obtained from $T$ by writing $m+n+1$ in $c'$. By similar reasoning, we have
\begin{prop}
Let $v_{\hat{T}}$ be any weight vector in $\mathbb{C}[S_{m+n+1}]$ with weight given by $\hat{T}$, and let $T'$ be the subtableau containing $1, \ldots , m$ as before. Then $\jdt^{(m+1, m+n+1)} \cdot v_T$ is a weight vector whose weight is given by $\jdt^c(T/T') = \jdt^c(\tilde{T})$, which we view as an $m+n+1$-box tableau by adding $m+1$ to each entry, filling in the inner shape with $1, \cdots ,m$ as in $T'$, and writing $m+1$ in the inner box of $\tilde{T}$ vacated by the slide.
\end{prop}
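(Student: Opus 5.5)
We mirror the proof of Proposition \ref{prop_inner_slide}, now moving the JM elements rightward through the product $\jdt^{(m+1, m+n+1)} = \tvarphi_{m+1} \tvarphi_{m+2} \cdots \tvarphi_{m+n}$, which applies $\tvarphi_{m+n}$ first. The strategy is to show, for each $j$, that $X_j \cdot \jdt^{(m+1,m+n+1)} \cdot v_{\hat T}$ equals an explicit scalar times $\jdt^{(m+1,m+n+1)} \cdot v_{\hat T}$, and that these scalars are exactly the contents of the tableau described in the statement. We then conclude, as before, that a tuple of contents determines the tableau \cite{bib_OV}. We also need to know the result is nonzero. This holds because each $\tvarphi_i$ is either a nonzero scalar (when $i,i+1$ are adjacent) or a rescaled $\varphi_i$ applied to a weight vector whose switch is standard; by part (\ref{Sn_intertwiners}) of Proposition \ref{prop_Sn_reps}, the latter sends it to a nonzero multiple of another weight vector.

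First, $X_1, \ldots, X_m$ commute with every $\tvarphi_i$ for $i \geq m+1$. The only possible exception is $X_m$ against $\tvarphi_{m+1}$, but $|m-(m+1)|=1$ is not an issue because $\tvarphi_{m+1}$ involves only $\sigma_{m+1}, X_{m+1}, X_{m+2}$, and $X_m$ commutes with all of these. Hence the entries $1,\ldots,m$ of $T'$ are unchanged, which gives the ``filling in $\mu$ with $T'$'' part of the statement.

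Next, for $i = 1, \ldots, n$, compute $\jdt^{(m+1,m+n+1)} \cdot X_{m+i} \cdot v_{\hat T}$ by pushing $X_{m+i}$ leftward through the product. By Lemma \ref{lemma_normal_int}, $X_{m+i}$ passes $\tvarphi_{m+n}, \ldots, \tvarphi_{m+i+1}$ unchanged and first meets $\tvarphi_{m+i}$. At that moment the current tableau is $\hat T$ after the switches $\sigma_{m+n}, \ldots, \sigma_{m+i+1}$, so the hole (the current position of the large entry $m+i+1$) is being tracked exactly as in the reversed procedure of Lemma \ref{lemma_slide_switch}. There are three cases, matching (\ref{slide_comm_rel}):
\begin{enumerate}
\item If $m+i$ and $m+i+1$ are not adjacent (entry $i$ of $\tilde T$ stays put), then $\tvarphi_{m+i} X_{m+i} = X_{m+i+1}\tvarphi_{m+i}$ by Lemma \ref{lemma_intertwiners}.
\item If they are in the same row, $\tvarphi_{m+i}$ is a scalar and $X_{m+i} = X_{m+i+1}+1$ on this vector; then $i$ slides right, and its content increases by $1$.
\item If they are in the same column, we substitute $X_{m+i+1}-1$; then $i$ slides down.
\end{enumerate}
In each case the resulting element is $X_{m+i+1}$ or $X_{m+i+1}\pm1$, which commutes past the remaining $\tvarphi_{m+i-1},\ldots,\tvarphi_{m+1}$. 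This yields the eigenvalue of $X_{m+i+1}$ on the output, namely the content of the new position of entry $i$, reindexed upward by $1$.

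Finally, $X_{m+1}$ on the output must give the content of the vacated inner box. Here I would push $X_{m+1}$ rightward-in-order, i.e. move it through the product starting from the left factor $\tvarphi_{m+1}$: whatever multiple $\tvarphi_{m+1}$ produces, its relation carries $X_{m+1}$ to $X_{m+2}$ or $X_{m+2}\pm1$, and this continues up the chain. The cleaner route is a trace/sum argument: $X_1+\cdots+X_{m+n+1}$ is central, so it acts by the total content of $\lambda \cup c'$ on both $v_{\hat T}$ and the output. Since all other eigenvalues are already identified, the eigenvalue of $X_{m+1}$ is forced to be the content of the remaining box, namely the vacated inner box. The main obstacle is the bookkeeping in the middle step: one must verify that the state of the tableau when $\tvarphi_{m+i}$ acts coincides with the intermediate stage of the reversed switching procedure of Lemma \ref{lemma_slide_switch}, including the convention that the hole carries the larger label. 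Once that identification is in place, the three cases are routine.
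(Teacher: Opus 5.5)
Your argument is the paper's: push $X_{m+i}$ leftward through $\tvarphi_{m+1}\cdots\tvarphi_{m+n}$, split into three cases at $\tvarphi_{m+i}$, note that $X_1,\ldots,X_m$ are untouched, and conclude because contents determine the tableau. However, the relations you write in the two adjacent cases have the wrong sign.

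\textbf{The sign error.} In the current (standard) tableau, suppose $m+i$ and $m+i+1$ are adjacent in a row. Then $m+i+1$ lies to the right of $m+i$, so its content is one larger, and $X_{m+i}=X_{m+i+1}-1$ on that vector. If they are adjacent in a column, $m+i+1$ lies below, and $X_{m+i}=X_{m+i+1}+1$.

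You wrote $X_{m+i}=X_{m+i+1}+1$ in the row case and substituted $X_{m+i+1}-1$ in the column case. Let $c$ be the content of the box of $m+i$ in $\hat T$. Carried through, your substitutions give $X_{m+i+1}$ the eigenvalue $c-1$ on the output when $i$ slides right, and $c+1$ when it slides down. That is the opposite of what you (correctly) assert.

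It looks as if you kept the signs of (\ref{slide_comm_rel}) while swapping which of the two elements is being eliminated. But the difference $X_{m+i}-X_{m+i+1}$ is fixed by the tableau and does not flip. With the signs corrected, $X_{m+i}$ becomes $X_{m+i+1}$, $X_{m+i+1}-1$, or $X_{m+i+1}+1$ according as $i$ stays, slides right, or slides down, exactly as in the paper.

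\textbf{The rest is sound,} and in places more careful than the paper.
\begin{itemize}
\item The paper only computes $X_{m+2},\ldots,X_{m+n+1}$ on the output and then appeals to contents determining the tableau, leaving the entry $m+1$ implicit. Your use of the central element $X_1+\cdots+X_{m+n+1}$ (the class sum of transpositions) legitimately forces $X_{m+1}$ to act by the content of the vacated inner box. Alternatively, each $\tvarphi_i$ sends weight vectors to weight vectors, so the output is one, and its remaining content is forced by the shape.
\item Your nonvanishing argument is also a useful addition that the paper does not spell out.
\item One small point: passing $X_{m+i}$ across $\tvarphi_{m+i+1}$ is not literally covered by Lemma \ref{lemma_normal_int}, which assumes $|i-j|>1$. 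It does hold, for the reason you gave for $X_m$ and $\tvarphi_{m+1}$.
\end{itemize}
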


\begin{proof}
The proof is the same as for inner slides, using the fact that $\jdt^{(m+1, m+n+1)}$ doesn’t touch $\mathbb{C}[S_m] \subset \mathbb{C}[S_{m+n+1}]$, and computing  $\jdt^{(m+1, m+n+1)} \cdot X_{m+i} \cdot v_{\hat{T}}$ for $i=1, \ldots , n$ by moving $X_{m+i}$ to the left across $\jdt^{(m+1, m+n+1)} =  \tvarphi_{m+1} \tvarphi_{m+2} \cdots \tvarphi_{m+n-1} \tvarphi_{m+n}$. When we commute $X_{m+i}$ with $\tvarphi_{m+i}$, it becomes $X_{m+i+1}$, $X_{m+i+1} - 1$, or $X_{m+i+1} + 1$, depending on whether $i$ stays in place, slides right ($\sigma_{m+i} = 1$, $X_{m+i} - X_{m+i+1} = -1$), or slides down ($\sigma_{m+i} = -1$, $X_{m+i} - X_{m+i+1} = 1$) during the outer slide into $c'$. Then $X_{m+i+1}$ commutes past the remaining $\tvarphi_j$, which all have indices lower than $m+i$, and we obtain
\[
X_{m+i+1} \cdot \jdt^{(m+1, m+n+1)} \cdot v_{\hat{T}} = 
\begin{cases}
c_T(m+i) \cdot \jdt^{(m+1, m+n+1)} \cdot  v_{\hat{T}} & \text{$i+1$ fixed in }\jdt^{c'}(\tilde{T}) \\
(c_T(m+i+1))+1) \cdot \jdt^{(m+1, m+n+1)} \cdot v_{\hat{T}} & \text{$i+1$ slides right in }\jdt^{c'}(\tilde{T}) \\
(c_T(m+i+1)-1) \cdot \jdt^{(m+1, m+n+1)} \cdot v_{\hat{T}} & \text{$i+1$ slides down in }\jdt^{c'}(\tilde{T}).
\end{cases}
\]
The Proposition follows, again because a tableau is determined by the contents corresponding to its entries.
\end{proof}

\subsection{Confluence} \label{sec_confluence}
The realization of slides as operators means that {\em confluence}, the property that all rectification sequences of inner slides that start from the same skew tableau must end in the same straight tableau, also becomes a simple consequence of the Hecke relations.

Define the {\em rectification operator} $\Rect_n^{m+n}$ to be the composition of inner slide operators
\begin{equation}
\Rect_n^{m+n} = \jdt_{(1,1+n)} \circ \jdt_{(2,2+n)} \circ \cdots \circ \jdt_{(m, m+n)}, 
\end{equation}
Recalling that we forget about the outer box erased at each step in the rectification process, we consider the image of $\Rect_n^{m+n}$ as an $S_n$-module only, where $S_n \subset S_{m+n}$ acts on the {\em first} letters $1, \ldots n$. (This is because we are looking at the tableau after the slide sequence, when the entries $m+1, \ldots , m+n$ of $T$ have each shifted down by $m$, and become $1, \ldots , n$.) In other words, from a weight perspective, we only look at weights for $X_1, \ldots , X_n$.

Let $\tilde{T} = T/T'$ be a skew tableau of shape $\lambda / \mu$ with $n$ boxes, as in the previous subsection, and let $v_T$ be any weight vector with weight $T$. When we act on $v_T$ by $\Rect_n^{m+n}$, $T'$ represents the rectification path ($\jdt_{(m, m+n)}$ is an inner slide into the box of $T'$ containing $m$, then $\jdt_{(m-1, m+n-1)}$ is an inner slide into the box containing $m-1$, and so on), and confluence is simply the statement that, for $i=1, \ldots , n$, the eigenvalue corresponding to $X_i \cdot \Rect_n^{m+n} v_T$ does not depend on $T'$, i.e., on how $1, \ldots , m$ were originally arranged in the (fixed) inner shape $\mu$.

\begin{prop}
(Confluence Theorem)
For $i = 1, \ldots , n$, $X_i$ acts on $\Rect_n^{m+n} v_T$ by a scalar independent of $T'$.
\end{prop}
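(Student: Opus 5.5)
The plan is to reduce the statement to the case where $T'$ is changed by a single elementary switch, and then to push the corresponding intertwiner through the rectification operator using the Hecke relations. Any two standard fillings of the fixed inner shape $\mu$ are connected by a chain of switches of $j$ and $j+1$, where $1 \le j < m$ and $j$, $j+1$ are not in adjacent boxes. This is the standard fact that two paths in Young's lattice from $\emptyset$ to $\mu$ are connected by such moves. It therefore suffices to compare $\Rect_n^{m+n} v_T$ with $\Rect_n^{m+n} v_{T_1}$, where $T_1$ is obtained from $T$ by switching $j$ and $j+1$ inside $T'$. By part (\ref{Sn_intertwiners}) of Proposition \ref{prop_Sn_reps}, we may take $v_{T_1}$ to be a nonzero multiple of $\tvarphi_j v_T$. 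Since $j,j+1$ are in different rows and columns, $\tvarphi_j$ is a nonzero rescaling of $\varphi_j$ there.

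Next, split $\Rect_n^{m+n}$ as $A \circ \jdt_{(j,j+n)} \circ \jdt_{(j+1,j+1+n)} \circ B$. Here $B = \jdt_{(j+2,j+2+n)} \circ \cdots \circ \jdt_{(m,m+n)}$ involves only $\tvarphi_k$ with $k \ge j+2$, so it commutes with $\tvarphi_j$ by Lemma \ref{lemma_normal_int}. Also, $B v_T$ has the same switch between $j$ and $j+1$ available, since $B$ does not touch $1,\ldots,j+1$. This reduces everything to the two-step operator
\[
D = \tvarphi_{j+n-1}\cdots\tvarphi_{j}\,\tvarphi_{j+n}\cdots\tvarphi_{j+1}.
\]
The key claim is that, on the relevant weight vector,
\[
D\,\tvarphi_j = c\,\tvarphi_{j+n}\,D
\]
for some scalar $c$ (possibly after replacing $\tvarphi_{j+n}$ by a scalar). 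At the level of the underlying permutations this is clear: $s_j$ passes from the right of the word $(s_{j+n-1}\cdots s_j)(s_{j+n}\cdots s_{j+1})$ to its left as $s_{j+n}$, using only the commutation relations and braid moves $s_k s_{k+1} s_k = s_{k+1} s_k s_{k+1}$. In the algebra I would use the same sequence of moves, with the braid relation (\ref{braid_rel_int}) of Lemma \ref{lemma_intertwiners} for the $\varphi_k$ and part (\ref{eigenexchange3}) to track the scalar denominators of the $\tvarphi_k$.

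Granting the claim, the remaining operator $A = \jdt_{(1,1+n)}\circ\cdots\circ\jdt_{(j-1,j-1+n)}$ involves only $\tvarphi_k$ with $k \le j+n-2$. Applying $A$ and tracking the reindexing of Proposition \ref{prop_inner_slide}, the extra factor $\tvarphi_{j+n}$ ends up acting on letters that become $n+1$ and $n+2$. Concretely, commute $X_i$ for $i \le n$ leftward through $A$ as in the proof of Proposition \ref{prop_inner_slide}; it becomes some $X_{i'}$ with $i' \le j+n-1$ plus a constant, and this constant depends only on the sliding pattern. Such $X_{i'}$ commutes with $\tvarphi_{j+n}$ by Lemma \ref{lemma_normal_int}. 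Hence $X_i$ has the same eigenvalue on $\Rect_n^{m+n} v_{T_1}$ as on $\Rect_n^{m+n} v_T$, provided the latter is nonzero.

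The main obstacle is the claim, because the $\tvarphi_k$ do not satisfy the braid relation: the example $\ytableaushort{12,3}$ shows this, and it happens exactly when some $\varphi_k$ vanishes and $\tvarphi_k$ degenerates to a scalar $\pm\frac12$. There is also the issue of $\tvarphi_{j+n}$ possibly acting by a scalar rather than an intertwiner. I would handle this by working on the specific weight vector and checking the braid moves case by case. At each $\tvarphi_k$ we know from Proposition \ref{prop_inner_slide} whether we are in the fixed, slides-left, or slides-up case. The configurations where $\tvarphi$ degenerates correspond to the two slide holes (into the boxes of $j$ and $j+1$) interacting. The small commutation identities needed in each such case, for example $\tvarphi_k\tvarphi_{k+1}\tvarphi_k \propto \tvarphi_{k+1}\tvarphi_k\tvarphi_{k+1}$ on a given weight vector, should follow from the local action principle. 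If this case analysis becomes unwieldy, the fallback is to verify only the weight statement: apply the eigenvalue bookkeeping of (\ref{slide_comm_rel}) to both sides, which requires knowing only which case occurs at each step, not the exact operator identity.
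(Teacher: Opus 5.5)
Your reduction steps are sound: standard fillings of $\mu$ are connected by admissible switches, $v_{T_1}\propto\tvarphi_j v_T$, $\tvarphi_j$ commutes past $B$, and the final step works once the claim is granted. The gap is the key claim $D\,\tvarphi_j=c\,\tvarphi_{j+n}D$ on the relevant weight vector. It is false, so no case-by-case braid check can establish it.

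The identity does hold for permutations, and for the unnormalized $\varphi_k$ via Lemma \ref{lemma_intertwiners}. But the $\varphi$-word kills the relevant vector, because every slide moves at least one entry. So nothing transfers to the $\tvarphi_k$.

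Combinatorially, your claim says that inner slides into the two inner corners holding $j$ and $j+1$ commute, up to exchanging the labels of the two vacated boxes. That fails. Write tableaux by rows from top to bottom and take $m=3$, $n=4$, $j=2$, $T=(134\,|\,256\,|\,7)$ and $T_1=(124\,|\,356\,|\,7)$. Then $B$ is empty and $D=\tvarphi_5\tvarphi_4\tvarphi_3\tvarphi_2\,\tvarphi_6\tvarphi_5\tvarphi_4\tvarphi_3$. Tracing the switches gives $Dv_T\propto v_{(124|367|5)}$ but $Dv_{T_1}\propto v_{(126|347|5)}$. In the first tableau $6$ and $7$ are adjacent, so $\tvarphi_6$ acts on $Dv_T$ by a scalar. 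Hence $X_4$ acts by $2$ on $\tvarphi_6 Dv_T$ but by $0$ on $D\tvarphi_2 v_T$. Even the skew shapes after the two slides differ: $(3,1,1)/(1)$ versus $(2,2,1)/(1)$.

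The two sides only come back together after the remaining operator $A=\tvarphi_4\tvarphi_3\tvarphi_2\tvarphi_1$. One gets $\Rect_4^7 v_T\propto v_{(135|267|4)}$ and $\Rect_4^7 v_{T_1}\propto v_{(136|257|4)}$. These agree on $1,\dots,4$ and differ by exchanging $n+1$ and $n+2$, not $j+n$ and $j+n+1$. So the effect of an elementary change of $T'$ cannot be localized in $D$. You would have to control how all of $A$ transports it, which is the real content of the fundamental theorem of jeu de taquin; Haiman's dual equivalence is the standard tool.

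Your fallback does not close the gap. By (\ref{slide_comm_rel}), the final $X_i$-eigenvalue along each rectification order is $c_T(m+i)+u-l$. Showing that $u-l$ is the same for both orders is exactly the statement being proved.

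For comparison, the paper works directly at this level. It commutes $X_i$ through all of $\Rect_n^{m+n}$ to get $X_{m+i}-l+u$, then uses that $X_{m+i}$ commutes with $\varphi_\sigma$ for $\sigma\in S_m$. There too, $l$ and $u$ are computed along the path fixed by $T'$. So the path-independence of $u-l$ is the point that still needs an argument.
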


\begin{proof}
By equation (\ref{slide_comm_rel}) in the proof of Proposition \ref{prop_inner_slide}, for $k \leq m$, and $v$ a weight vector, we have
\[
X_{k+i-1} \jdt_{(k,k+n)} \cdot v = \jdt_{(k, k+n)} (X_{k+i} + a) \cdot v,
\]
where $a = 0, -1, 1$ depending on whether the inner slide corresponding to $\jdt_{(k,k+n)}$ acting on the tableau corresponding to $v$ fixes $i$, slides it to the left, or slides it up. Iterating this to move $X_i$ across each $\jdt_{(k,k+n)}$ in the expansion of $\Rect_n^{m+n}$ in turn, we find that
\[
X_i \cdot \Rect_n^{m+n} \cdot v_T = \Rect_n^{m+n} \cdot (X_{m+i} - l + u) \cdot v_T,
\]
where $l$ and $u$ count the total number of times $i$ slides left and up in the slide sequence corresponding to the rectification.

Rearranging the entries of $\mu$, which are $1, \ldots , m$, corresponds to acting on $v_T$ by $\varphi_{\sigma}$ for some $\sigma \in S_m$, i.e. $\sigma$ acts on $1, \ldots , m$ only. Since $S_m \subset S_{m+n}$ and $S_n \subset S_{m+n}$ commute (here we are considering $S_n$ before the inner slide operators act, so $S_n$ acts on the last $n$ entries $m+1, \ldots , m+n$), every $X_{m+i}$ commutes with every $\varphi_{\sigma}$, and we have
\[
(X_{m+i} - l + u) \cdot \varphi_{\sigma} \cdot v_T = \varphi_{\sigma} \cdot (X_{m+i} - l + u) \cdot v_T,
\]
i.e., the corresponding eigenvalue is independent of $\sigma \in S_m$ and hence of $T'$, as asserted.
\end{proof}
An alternate formulation is that, inspecting the proof, we have actually shown that the $X_i$-weight after the action of $\Rect_n^{m+n}$ (and shifting the indices down by $m$) is equal to the original weight adjusted down by the number of times $i$ slides left and adjusted up by the number of times $i$ slides up in the rectification sequence. This tells us the content of the box containing $i$ for every $i$, which determines the rectified tableau as stated before.

It is now easy to pay off the combinatorial debt from the end of Section \ref{sec_bump_slide} and conclude that the RSK insertion algorithm is an instance of rectification:

\begin{cor} \label{cor_jdt_eq_rect}
Suppose that $\tilde{T}$ is a skew tableau, and $T$ is a straight tableau such that $\tilde{T} \cong T$ (jdt-equivalence). Then $T$ is the rectification of $\tilde{T}$, i.e., $T$ can be obtained from $\tilde{T}$ by a sequence of inner slides only.
\end{cor}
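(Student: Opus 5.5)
The plan is to show that rectification, viewed as a function on skew tableaux, is invariant under every single jdt slide, inner or outer. The Confluence Theorem gives us a well-defined map $\Rect$: for any skew tableau $\tilde{T}$, every sequence of inner slides ending in a straight tableau produces the same result. Now let $\tilde{T} = \tilde{T}_0, \tilde{T}_1, \ldots, \tilde{T}_k = T$ be a chain of single slides witnessing $\tilde{T} \cong T$. If each step preserves $\Rect$, then $\Rect(\tilde{T}) = \Rect(T) = T$, because a straight tableau rectifies to itself with no slides. An outer slide is the inverse of an inner slide, so both kinds of step are covered once we show $\Rect(\jdt_c(S)) = \Rect(S)$ for every skew tableau $S$ and inner corner $c$.

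The inner case follows from confluence. The tableau $\jdt_c(S)$ is the first step of one particular rectification sequence of $S$: slide into $c$ first, then continue by any choice of inner corners. Confluence says this sequence ends at the same straight tableau as every other rectification sequence of $S$, so $\Rect(\jdt_c(S)) = \Rect(S)$.

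The one point needing care is that confluence, as stated, compares rectification orders for a \emph{fixed} inner shape $\mu$, encoded by the choices of $T'$. So we must check two things. First, that sliding into $c$ first can be encoded by some standard filling $T'$ of $\mu$. This works by putting $m$ in the box $c$, which is allowed precisely because $c$ is an outer box of $\mu$. Second, that after $\jdt_c$ the remaining slides into the smaller inner shape are themselves encoded by the restriction of $T'$ to $1, \ldots, m-1$. This is exactly how $\Rect_n^{m+n}$ is built, as $\jdt_{(1,1+n)} \circ \cdots \circ \jdt_{(m,m+n)}$, with Proposition \ref{prop_inner_slide} identifying each operator with the corresponding combinatorial slide.

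The main obstacle is one of presentation: making explicit that every rectification sequence of $S$ arises from some $T'$. Reading the inner corners in the order they are filled, labelled $m, m-1, \ldots, 1$, gives a filling of $\mu$ that is removed one outer box at a time, which is exactly a standard tableau $T'$. Once that is said, chaining the invariance along $\tilde{T}_0, \ldots, \tilde{T}_k$ completes the proof. Combined with Corollary \ref{insertion_jdt}, it also shows that $P(w)$ is the rectification of $S(w)$ and of $R(w)$.
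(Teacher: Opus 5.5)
Your proof is correct and is essentially the paper's argument. The paper inducts on the number of slides and uses confluence in the same way, treating an outer slide as the inverse of an inner slide; your chain argument ("rectification is invariant under each single slide") is the same induction written out step by step. Your remark that any order of filling the inner corners corresponds to a standard filling $T'$ of $\mu$ usefully spells out a point the paper leaves implicit when it applies the Confluence Theorem.
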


\begin{proof}
We induct on the number $k$ of slides in the jdt-equivalence by which we obtain $T$ from $\tilde{T}$. The base case $k=1$ is a single slide, which must be an inner slide to result in a straight tableau.

For a general sequence of $k$ slides turning $\tilde{T}$ into $T$, call the result of the first slide $\tilde{T}'$, so that $\tilde{T}' \cong T$ via a sequence of $k-1$ slides. By the inductive hypothesis, $\tilde{T}'$ rectifies to $T$. If the slide turning $\tilde{T}$ into $\tilde{T}'$ is an inner slide, then composing it with the rectification path from $\tilde{T}'$ to $T$ rectifies $\tilde{T}$ into $T$. If the slide turning $\tilde{T}$ into $\tilde{T}'$ is an outer slide, let $T'$ be the rectification of $\tilde{T}$. Then the inner slide from $\tilde{T}'$ back into $\tilde{T}$, composed with the rectification path from $\tilde{T}$ to $T'$, rectifies $\tilde{T}'$ into $T'$, and hence $T' = T$ by the confluence theorem.
\end{proof}

\begin{cor} \label{cor_rsk_rect}
Let $w \in S_n$, and let $R(w)$ and $S(w)$ be its associated ribbon and staircase tableaux, as in Corollary \ref{insertion_jdt}. Then $P(w)$ is the rectification of $R(w)$ and $S(w)$. 
\end{cor}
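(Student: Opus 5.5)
The plan is to combine Corollary \ref{insertion_jdt} with Corollary \ref{cor_jdt_eq_rect}, so the argument is essentially a two-line deduction. First, Corollary \ref{insertion_jdt} gives the jdt-equivalences $S(w) \cong P(w)$ and $R(w) \cong P(w)$. These were established through plactic factorizations, Proposition \ref{plactic_insertion_box2tableau}, and the squeezing from staircase to ribbon, so they may involve outer slides as well as inner ones. Second, $P(w)$ is a straight tableau. It is standard by the second part of Lemma \ref{rsk_properties}, and it has straight (non-skew) shape by construction. So the hypotheses of Corollary \ref{cor_jdt_eq_rect} hold with $\tilde{T} = S(w)$ (respectively $\tilde{T} = R(w)$) and $T = P(w)$.

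Applying Corollary \ref{cor_jdt_eq_rect}, $P(w)$ can be obtained from $S(w)$, and likewise from $R(w)$, by a sequence of inner slides alone. Rectification was defined as an arbitrary sequence of inner slides ending at a straight tableau, and the Confluence Theorem says every such sequence ends at the same straight tableau. Therefore ``the'' rectification of $S(w)$ and of $R(w)$ is well-defined, and it equals $P(w)$.

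The only point needing care is to make sure that the equivalence chain is a genuine sequence of slides between standard skew tableaux with a common set of entries. Then the induction in Corollary \ref{cor_jdt_eq_rect}, which compares rectifications using confluence, applies verbatim. That corollary already handles the outer-slide steps, which were the gap flagged at the end of Section \ref{sec_bump_slide}, so no further obstacle is expected.

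For completeness I would also note, as the paper does after the ribbon example, that rectifying $S(w)$ from bottom to top proceeds through the subwords $w_1\cdots w_i$. Applying the same argument to each prefix shows that the successive partial rectifications are $P(w_1\cdots w_i)$. This recovers $Q(w)$ as the record of the shapes, although only the $P$ statement is required here.
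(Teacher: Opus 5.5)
Your proposal is correct and matches the paper's proof: the paper also obtains the statement immediately by feeding the jdt-equivalences $S(w) \cong P(w)$ and $R(w) \cong P(w)$ from Corollary \ref{insertion_jdt} into Corollary \ref{cor_jdt_eq_rect}. Your closing remark about prefixes and $Q(w)$ is not needed for the statement, as you yourself note.
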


\begin{proof}
Follows immediately from Corollary \ref{insertion_jdt} and Corollary \ref{cor_jdt_eq_rect}.
\end{proof}
As mentioned in Section \ref{sec_bump_slide}, rectifying a ribbon or staircase tableau bottom to top recovers the Young’s lattice path corresponding to inserting the letters of $w$ one at a time, which is nothing more than the record tableau $Q(w)$. In the next subsection, we will construct $Q(w)$ more symmetrically, by considering the regular representation of the symmetric group as a left and right module at once.

\subsection{RSK = Rectifying Staircases Knowledgeably}
We work in $\mathbb{C}[S_{m+n}]$ with $m = \binom{n}{2}$ fixed, and we embed $V(a_1, \ldots , a_n)$ (keeping $a_i = 2i - n -1$) inside it as follows. Let $\lambda = (n, n-1, \ldots, 1)$ and $\mu = (n-1, n-2, \ldots , 1)$, as in Lemma \ref{Hn_mod_embedding}. Let $T'$ be the tableau of shape $\mu$ given by filling $\mu$ top to bottom, i.e., writing $1, 2, \ldots , n-1$ in the first row, $n, n+1, \ldots , 2n-3$ in the second row, and so on. ($T'$ represents the rectification path as in Section \ref{sec_confluence}, so we will rectify bottom to top for concreteness, but the specific tableau and path don’t matter by the confluence theorem.) Extend to a tableau $T_0$ of shape $\lambda$ by writing $m+1$ in the first column (last row), $m+2$ in the second column (next to last row) and so on, to represent $\id \in S_n$.

By Proposition \ref{prop_weight_basis_CSn}, $\mathbb{C}[S_{m+n}]$ has a weight basis for the left and right actions of $S_{m+n}$, made up of vectors $v_{T, T^*}$. Embed $V_{\lambda / \mu}$, and hence $V(a_1, \ldots , a_n)$ (by Lemma \ref{Hn_mod_embedding}) in $\mathbb{C}[S_{m+n}]$ by the map (diagonal embedding)
\[
\eta: V(a_1, \ldots , a_n) \longrightarrow V_{\lambda / \mu} \longrightarrow \mathbb{C}[S_{m+n}]
\]
defined by taking the vacuum $v \mapsto v_{T_0} \mapsto v_{T_0, T_0}$ and extending by the diagonal action of $S_n$, where $S_n \subset S_{m+n}$ acts on $m+1, \dots , m+n$ as before. Specifically, we take the $H_n$-weight vector $v_{\sigma} = \sigma \cdot v$ to $v_{\sigma T_0, \sigma^{-1} T_0}$, using the fact that $m+1, \ldots , m+n$ are all in outer boxes of $T_0$ (the staircase), and thus any permutation that acts on $m+1, \ldots , m+n$ only maintains standardness when acting on $T_0$. (We need to include $\sigma^{-1}$ in the formula for the embedding because the right tableau that appears in the indexing of the weight basis for $\mathbb{C}[S_{m+n}]$ comes from the right action of the group, and exchanging a left and right group action corresponds to exchanging $\sigma$ and $\sigma^{-1}$.) The image of $\eta$ lies in the $V_{\lambda}$-isotypic component of $\mathbb{C}[S_{m+n}]$, and is isomorphic to the regular representation $\mathbb{C}[S_n]$ as both a left $S_n$-module and as a right $S_n$-module, though not as an $S_n$-bimodule.

We now act by the rectification operator $\Rect_n^{m+n}$ on the left. By Proposition \ref{prop_inner_slide} and Corollary \ref{cor_rsk_rect}, for any $T^*$ we have
\[
\Rect_n^{m+n} \cdot v_{\sigma T_0, T^*} = v_{P(\sigma), T^*}.
\]

To obtain the weight corresponding to $Q(\sigma)$ as well, we dualize the construction above by taking the mirror image at every step. Define dual intertwiners $\varphi_i^*$ acting on the right in a right $H_n$-module by $\varphi_i^* = (Y_i - Y_{i+1}) \sigma_i + 1$. We normalize dual intertwiners acting on the right in $\mathbb{C}[S_{m+n}]$ the same way as the original ones that act on the left, recalling that the denominator can act either on the left or on the right (Lemma \ref{lemma_intertwiners}, part (\ref{eigenexchange2})).

Composing the normalized dual intertwiners $\tvarphi_i^*$, we define the inner and outer {\em dual slide operators} acting on the right in $\mathbb{C}[S_{m+n}]$ by
\begin{eqnarray*}
\jdt_{(m, m+n)}^* &=& \tvarphi_m \tvarphi_{m+1} \cdots  \tvarphi_{m+n-2} \tvarphi_{m+n-1}, \\
\jdt_*^{(m+1, m+n+1)} &=& \tvarphi_{m+n}\tvarphi_{m+n-1} \cdots \tvarphi_{m+2}\tvarphi_{m+1}.
\end{eqnarray*}
Finally, the {\em dual rectification operator} $(\Rect_n^{m+n})^*$ acts on the right in $\mathbb{C}[S_{m+n}]$ by
\[
(\Rect_n^{m+n})^* = \jdt_{(m, m+n)} \circ \jdt_{m-1, m+n-1} \circ \cdots \circ \jdt_{(2,2+n)}  \circ  \jdt_{(1,1+n)}.
\]
When this acts on the right on $v_{T^*, \sigma^{-1} T_0}$, we get
\[
(\Rect_n^{m+n})^* \cdot v_{T^*, \sigma^{-1} T_0} = v_{T^*, P(\sigma^{-1})} = v_{T^*, Q(\sigma)}.
\]

Combining $\eta$ with the left and right rectification operators, and pulling together all the calculations in this section, we obtain our final statement:
\begin{thm}
(RSK as Basis Change)
Consider the composition 
\[
(\Rect_n^{m+n} \circ \eta) ( * ) (\Rect_n^{m+n})^* : V(a_1, \ldots , a_n) \longrightarrow \mathbb{C}[S_n],
\] 
where $\eta$ and $\Rect_n^{m+n}$ act on the left and $(\Rect_n^{m+n})^*$ acts on the right. This is a change of basis taking the weight basis $v_{\sigma}$ for $V(a_1, \ldots , a_n)$ to the weight basis $v_{T, T'}$ for $\mathbb{C}[S_n]$, and its action is given explicitly by
\[
v_{\sigma} \longmapsto v_{\sigma T_0, \sigma^{-1} T_0} \longmapsto v_{P(\sigma), Q(\sigma)}.
\]
\end{thm}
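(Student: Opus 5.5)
The plan is to verify the composition in three stages, following the displayed chain $v_{\sigma} \mapsto v_{\sigma T_0, \sigma^{-1} T_0} \mapsto v_{P(\sigma), Q(\sigma)}$, and then argue that the result is a change of basis. First, the embedding $\eta$: by Lemma \ref{Hn_mod_embedding}, $v_{\sigma} = \varphi_{\sigma} v$ maps to $\varphi_{\sigma} v_{T_0}$. By Proposition \ref{prop_switching} and the weight formula, this is a weight vector of weight $\sigma T_0$, so up to scaling it equals $v_{\sigma T_0}$. Because every entry $m+1,\ldots,m+n$ of $T_0$ lies in an outer box of the staircase, all the relevant intertwiners act invertibly. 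Passing to $\mathbb{C}[S_{m+n}]$ via $v_{T_0} \mapsto v_{T_0,T_0}$, I would check that the left $X_i$-weights are those of $\sigma T_0$ and the right ones those of $\sigma^{-1}T_0$. This uses the uniqueness statement of Proposition \ref{prop_weight_basis_CSn}: a vector that is a simultaneous left and right eigenvector with these weights must be a scalar multiple of $v_{\sigma T_0, \sigma^{-1} T_0}$.

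Second, the left rectification. I would apply Proposition \ref{prop_inner_slide} iteratively to the factors $\jdt_{(k,k+n)}$ for $k = m, m-1, \ldots, 1$, using the rectification path encoded by $T'$. At each stage this tracks the left $X_i$-weights, and at the end the weights for $X_1,\ldots,X_n$ are given by the rectification of the staircase $S(\sigma)$. By Corollary \ref{cor_rsk_rect}, and the confluence theorem which removes any dependence on the chosen path, this rectification is $P(\sigma)$. The crucial extra observation is that left multiplication by elements of $\mathbb{C}[S_{m+n}]$ commutes with the right action, so the right weights are untouched. This gives $v_{P(\sigma), \sigma^{-1}T_0}$, up to a scalar.

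Third, the right side. I would run the mirror argument with the dual normalized intertwiners acting on the right; the right $X_i$-weights transform by the same slide rules, since the dual Hecke relations are the antiautomorphism images of the original ones. The right label therefore rectifies to $P(\sigma^{-1})$, which equals $Q(\sigma)$ by the symmetry proposition $Q(w) = P(w^{-1})$. The two operators commute with each other, so the order in which they are applied is irrelevant.

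It remains to interpret the target and check bijectivity. After rectification only the weights of $X_1,\ldots,X_n$ on each side are recorded, so we are landing in the weight basis of $\mathbb{C}[S_n]$ of Proposition \ref{prop_weight_basis_CSn}. The map $\sigma \mapsto (P(\sigma),Q(\sigma))$ is a bijection onto pairs of standard tableaux of equal shape by Lemma \ref{rsk_properties}. Hence distinct basis vectors go to distinct weight-basis vectors, provided the images are nonzero.

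I expect the main obstacle to be exactly this nonvanishing, together with making precise the identification of the rectified vector in $\mathbb{C}[S_{m+n}]$ with a vector of $\mathbb{C}[S_n]$. Nonvanishing should follow because each $\tvarphi_i$ is either a nonzero constant $\pm\frac12$ or a nonzero rescaling of an invertible $\varphi_i$ on the relevant weight space. For the identification, I would restrict to the $S_n$-isotypic data and invoke uniqueness of weight vectors up to scaling in $\mathbb{C}[S_n]$.
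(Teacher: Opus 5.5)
Your proposal is correct and follows essentially the same route as the paper. In both, $\eta$ produces $v_{\sigma T_0,\sigma^{-1}T_0}$; left rectification via Proposition~\ref{prop_inner_slide}, confluence and Corollary~\ref{cor_rsk_rect} turns the left label into $P(\sigma)$; the mirrored right rectification turns the right label into $P(\sigma^{-1})=Q(\sigma)$; and invertibility comes from RSK being a bijection plus a dimension count.

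Your extra points only make explicit what the paper leaves implicit: nonvanishing holds because each $\tvarphi_i$ is a nonzero scalar or a nonzero rescaling of $\varphi_i$ on the relevant weight vectors, and the left and right actions commute. One correction: the right label $\sigma^{-1}T_0$ is built into the paper's diagonal definition of $\eta$, so you cannot derive it from the left $H_n$-module structure, which would leave the right label at $T_0$.
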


\begin{proof}
Note that because of the action of the $\Rect$ operators, the subgroup $S_n \subset S_{m+n}$ acting on the $v_{T, T'}$ is the one that acts on $1, \ldots , n$. The description of the action on the basis is a restatement of the discussion above. This is invertible by a dimension count for $V(a_1, \ldots , a_n)$ and $\mathbb{C}[S_n]$, together with the fact that RSK is a bijection.
\end{proof}

Finally, if we follow the action of the (external) Hecke translations along the RSK change of basis, we see that $\eta$ turns the $Y_i$ into $X_{m+i}$, and rectification turns the $X_{m+i}$ into $X_i$. Thus, we should also view RSK as the means by which external translations in the degenerate AHA turn into the JM elements in $\mathbb{C}[S_n]$.

\end{document}